\newtheorem{Theorem}{Theorem}[section]
\newtheorem{Corollary}[Theorem]{Corollary}
\newtheorem{Proposition}[Theorem]{Proposition}
\newtheorem{Lemma}[Theorem]{Lemma}
\theoremstyle{definition}
\newtheorem{Definition}[Theorem]{Definition}
\newtheorem{Example}[Theorem]{Example}
\theoremstyle{remark}
\newtheorem{Remark}[Theorem]{Remark}
\newcommand{\vek}[1]{{\bm{#1}}} %
\newcommand{\abs}[1]{\left|#1\right|}
\newcommand{\dunion}{\dot{\cup}} %
\newcommand{\codim}{\mathop{\mathrm{codim}}} %
\newcommand{\rank}{\mathop{\mathrm{rk}}}
\newcommand{\BigDisjUnion}{\mathop{
\dot{\bigcup}
}\limits} %
\newcommand{\DisjUnion}{\mathop{
\dunion
}\limits} %
\newcommand{\gleich}[1]{\stackrel{\parbox{1cm}{\centering\scriptsize{#1}}}{=}}
\newcommand{\st}{s.\,t.\ } 
\newcommand{\ie}{\textit{i.\,e.\ }} 
\newcommand{\eg}{\textit{e.\,g.\ }} 
\newcommand{\N}{\mathbb{N}}
\newcommand{\Z}{\mathbb{Z}}
\newcommand{\R}{\mathbb{R}}
\newcommand{\C}{\mathbb{C}}
\newcommand{\K}{\mathbb{K}}
\newcommand{\Bcal}{\mathcal{B}}
\newcommand{\Dcal}{\mathcal{D}}
\newcommand{\Hcal}{\mathcal{H}}
\newcommand{\Ical}{\mathcal{I}}
\newcommand{\Zcal}{\mathcal{Z}}
\newcommand{\Pcal}{\mathcal{P}}
\newcommand{\eps}{\varepsilon}
\newcommand{\ideal}{\mathop{\mathrm{ideal}}}
\newcommand{\spa}{\mathop{\mathrm{span}}}
\newcommand{\BB}{ \mathbb B}
\newcommand{\Fcal}{\mathcal F}
\newcommand{\Lcal}{\mathcal L}
\newcommand{\Mfrak}{\mathfrak M}
\newcommand{\card}[1]{{\abs{#1}}}
\newcommand{\diff}[1]{\frac{\partial}{\partial #1}}
\newcommand{\hilb}{\mathop{\mathrm{Hilb}}}
\newcommand{\clos}{\mathop{\mathrm{cl}}\nolimits} 
\newcommand{\sym}{\mathop{\mathrm{Sym}}} 
\newcommand{\pair}[2]{\langle #1,#2 \rangle}
\numberwithin{equation}{section}
\newcommand{\sameterminologyasinmaindefinition}{%
We use the same terminology 
as in Definition~\ref{Definition:PIdefinition}. }
\journal{European Journal of Combinatorics}
\begin{document}

\begin{frontmatter}



\title{Hierarchical zonotopal power ideals}
\author{Matthias Lenz\fnref{label1}}
\ead{lenz@math.tu-berlin.de}
\ead[url]{http://page.math.tu-berlin.de/~lenz/}
\address{%
Technische Universit\"at Berlin \\
Sekretariat MA 3-6 \\
Stra\ss e des 17.~Juni 136 \\
10623 Berlin \\
GERMANY
}

\fntext[label1]{This work was supported by
 a Sofia Kovalevskaya Research Prize of Alexander von Humboldt Foundation awarded to Olga Holtz.}


\begin{abstract}
Zonotopal algebra deals with ideals and vector spaces of polynomials that are related %
 to  several combinatorial and geometric structures defined by a finite sequence of vectors. 
Given such a sequence $X$, %
an integer $k\ge -1$ and an upper set in the lattice of flats of the matroid defined by $X$, we define and study
the associated \emph{hierarchical zonotopal power ideal}.
This ideal is generated by powers of linear forms. Its Hilbert series depends only on the matroid structure of $X$.
Via the Tutte polynomial, it is related to various other matroid invariants,
\eg the shelling polynomial and the characteristic polynomial. 

This work unifies and generalizes results by Ardila-Postnikov on power ideals and by Holtz-Ron and Holtz-Ron-Xu on (hierarchical) zonotopal algebra.  We also generalize a result on zonotopal Cox modules that were introduced by Sturmfels-Xu.
\end{abstract}

\begin{keyword}
zonotopal algebra \sep matroids \sep Tutte polynomial
\sep power ideals \sep kernels of differential operators \sep Hilbert series

\MSC[2010]
Primary: 
05A15 \sep 
05B35 \sep  
05C31 \sep  
13B25 \sep   
16S32;  
Secondary:
05B45 \sep  
05C15 \sep  
05C21 \sep  
13P99 \sep  
41A15 \sep  
41A63 \sep  
47F05 \sep   
47L20 \sep   
52B20 \sep  
52C22

\end{keyword}

\end{frontmatter}


\section{Introduction}
Let $X=(x_1,\ldots, x_N) \subseteq \R^{r}$ be a sequence of vectors that span $\R^r$. 
For a vector $\eta $, %
let  $m(\eta)$ denote the number of vectors in $X$ that are \emph{not}
perpendicular to  $\eta$.
A vector $v\in \R^r$ defines a linear polynomial $p_v:=\sum_i v_i t_i \in \R[t_1,\ldots, t_r]$.
For $Y\subseteq X$, let $p_Y:= \prod_{x\in Y} p_x$. Then define 
 
\begin{align}
 \Pcal(X) &:=\spa\{ p_Y : X\setminus Y \text{ spans } \R^r  \}  %
 \\
 \Ical(X) &:= \ideal %
  \{ p_\eta^{m(\eta)}  : \eta \neq 0\}
\end{align}
The following theorem and several generalizations are well-known:
\begin{Theorem}[\cite{ardila-postnikov-2009,boor-dyn-ron-1991,holtz-ron-2011}]
\label{Theorem:CentralMainTheorem}
 \begin{align}
   \Pcal(X) = \ker \Ical(X) := \spa \left\{ q \in \R[t_1,\ldots, t_r] : 
   f (D)q = 0 \text{ for all } f\in \Ical(X) \right\}     
 \end{align}
where $f(D):=f\left(\frac{\partial}{\partial t_1},\ldots, \frac{\partial}{\partial t_r}\right)$.
In addition,  $\Ical(X)$ is equal to the ideal
\begin{align*}
 \Ical'(X):=\{ p_\eta^{m(\eta)} : \text{the vectors in $X$ that are perpendicular to $\eta$ span a hyperplane}%
   \}
\end{align*}
\end{Theorem}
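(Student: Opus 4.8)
The plan is to reduce both assertions of the theorem to the two statements \emph{(i)}~$\Pcal(X)\subseteq\ker\Ical(X)$ and \emph{(ii)}~$\dim_\R R/\ideal(\Ical'(X))\le\dim_\R\Pcal(X)$, where $R:=\R[t_1,\ldots,t_r]$, and then to assemble them by standard apolarity (Macaulay) duality. Here is the assembly. Since $\Ical'(X)$ consists of some of the generators of $\Ical(X)$, we have $\ideal(\Ical'(X))\subseteq\Ical(X)$, hence $\ker\Ical(X)\subseteq\ker\ideal(\Ical'(X))$; with \emph{(i)} this gives $\Pcal(X)\subseteq\ker\Ical(X)\subseteq\ker\ideal(\Ical'(X))$. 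Both ideals are homogeneous, and $R/\ideal(\Ical'(X))$ is finite dimensional: for each hyperplane flat $H$ of $X$ pick a normal $\eta_H$ of the hyperplane $\spa(H)$, so that $\ideal(\Ical'(X))=\ideal\{\,p_{\eta_H}^{\abs{X\setminus H}}:H\text{ a hyperplane flat}\,\}$; the $\eta_H$ span $\R^r$ --- given $v\neq 0$, expand $v$ in a basis $B\subseteq X$, take $b_0\in B$ with nonzero coefficient, and note $v\notin\spa(B\setminus b_0)=\spa\bigl(\clos(B\setminus b_0)\bigr)$ --- so the generators already cut out only the origin. The apolarity pairing $\pair{q}{f}=(f(D)q)(0)$ on $R_d\times R_d$ is perfect for every $d$ and makes $(\ker I)_d$ the annihilator of $I_d$ for every homogeneous ideal $I$; hence $\dim_\R\ker I=\dim_\R R/I$ when the latter is finite, and $I$ is recovered from $\ker I$. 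Combining this with \emph{(ii)}, the chain $\Pcal(X)\subseteq\ker\Ical(X)\subseteq\ker\ideal(\Ical'(X))$ satisfies $\dim_\R\ker\ideal(\Ical'(X))=\dim_\R R/\ideal(\Ical'(X))\le\dim_\R\Pcal(X)$, so the three spaces have equal dimension and coincide; in particular $\Pcal(X)=\ker\Ical(X)$, and from $\ker\Ical(X)=\ker\ideal(\Ical'(X))$ we get $\Ical(X)=\ideal(\Ical'(X))$.

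Statement \emph{(i)} is a direct Leibniz computation. Let $p_Y$ generate $\Pcal(X)$, so $X\setminus Y$ spans $\R^r$, and let $\eta\neq 0$; then $p_\eta(D)$ is differentiation in direction $\eta$ and $p_\eta(D)p_x=\pair{\eta}{x}$. Expanding $p_\eta(D)^{m(\eta)}p_Y=p_\eta(D)^{m(\eta)}\prod_{x\in Y}p_x$ by the Leibniz rule, every term distributes the $m(\eta)$ differentiations among the linear factors; a factor differentiated twice contributes $0$, and a factor $p_x$ with $\pair{\eta}{x}=0$ differentiated once contributes $0$. A surviving term would thus need $m(\eta)$ distinct factors $p_x$ with $x\in Y$ and $\pair{\eta}{x}\neq 0$; but exactly $m(\eta)$ elements of $X$ satisfy $\pair{\eta}{x}\neq 0$, so these would all lie in $Y$, forcing $X\setminus Y\subseteq\{x\in X:\pair{\eta}{x}=0\}$, a proper subspace --- contradicting that $X\setminus Y$ spans $\R^r$. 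Hence $p_\eta(D)^{m(\eta)}p_Y=0$ for every $\eta$, i.e.\ $p_Y\in\ker\Ical(X)$.

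For \emph{(ii)} I would use the classical identity $\dim_\R\Pcal(X)=b(X)$, the number of bases of the matroid of $X$ (see \cite{boor-dyn-ron-1991,holtz-ron-2011}), and prove $\dim_\R R/\ideal(\Ical'(X))\le b(X)$ by induction on $\abs X$; loops contribute the zero polynomial and are discarded. If $X$ is independent (hence a basis), then $\Ical'(X)$ is a set of $r$ linearly independent linear forms, the quotient is $\R$, and $b(X)=1$. If $X$ has a coloop $x$ but is not a basis, a linear change of coordinates taking $x$ to $e_r$ and $\spa(X\setminus x)$ to $\{t_r=0\}$ identifies $\ideal(\Ical'(X))$ with $(t_r)+\ideal(\Ical'(X\setminus x))R$, whose quotient is isomorphic to $\R[t_1,\ldots,t_{r-1}]/\ideal(\Ical'(X\setminus x))$, while $b(X)=b(X\setminus x)$, so induction applies. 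If $X$ has no coloop, pick any $x\in X$; then $b(X)=b(X\setminus x)+b(X/x)$, and it remains to prove the companion inequality
\[
\dim_\R\frac{R}{\ideal(\Ical'(X))}\ \le\ \dim_\R\frac{R}{\ideal(\Ical'(X\setminus x))}\ +\ \dim_\R\frac{\R[t_1,\ldots,t_{r-1}]}{\ideal(\Ical'(X/x))},
\]
the contraction $X/x$ being realized in $\R^{r-1}$; induction then finishes.

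This displayed inequality is the main obstacle. It is the ``ideal-side'' counterpart of the short exact sequences relating $\Pcal(X\setminus x)$, $\Pcal(X)$ and $\Pcal(X/x)$ in zonotopal algebra, and establishing it requires a careful analysis of how the hyperplane flats of $X$ --- equivalently its cocircuits, which govern both the generators $p_{\eta_H}$ and the exponents $\abs{X\setminus H}$ --- restrict under deleting $x$ and project under contracting $x$, in particular how a cocircuit of $X$ containing $x$ relates to a cocircuit of $X/x$. An alternative is to prove $\Ical(X)\subseteq\ideal(\Ical'(X))$ directly, by writing each generator $p_\eta^{m(\eta)}$ for which $\{x\in X:\pair{\eta}{x}=0\}$ fails to span a hyperplane as a multinomial in the forms $p_{\eta_H}$ over hyperplane flats $H\supseteq\{x\in X:\pair{\eta}{x}=0\}$ and checking that each monomial lies in the ideal; but making the exponents cooperate forces a delicate choice of the flats $H$, and this route seems no easier.
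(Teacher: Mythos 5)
Your framework — the easy inclusion plus a dimension bound, assembled by apolarity — is sound, and part \emph{(i)} is correct and matches the paper's Lemma~\ref{Prop:MainTheoremEasyInclusion} in spirit. But the proof has a genuine gap that you yourself flag: the deletion--contraction inequality
\[
\dim_\R\frac{R}{\ideal(\Ical'(X))}\ \le\ \dim_\R\frac{R}{\ideal(\Ical'(X\setminus x))}\ +\ \dim_\R\frac{\R[t_1,\ldots,t_{r-1}]}{\ideal(\Ical'(X/x))}
\]
is never established, and it carries the whole weight of the theorem. Without it, all you have is $\Pcal(X)\subseteq\ker\Ical(X)\subseteq\ker\ideal(\Ical'(X))$. (You also import $\dim_\R\Pcal(X)=b(X)$ as a black box, which is fair for a citation argument but is itself a substantial ingredient.)

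The paper cites Theorem~\ref{Theorem:CentralMainTheorem} as known rather than reproving it, but its Main Theorem in Section~\ref{section:MainTheorem} subsumes it as the case $k=0$, $J=\{X\}$, and the proof there supplies exactly what you are missing --- on the dual side. Instead of bounding a quotient dimension, Lemma~\ref{Lemma:HilfslemmaHauptsatz} constructs a short exact sequence on kernels,
\[
0 \to \ker\Ical(X\setminus x)(-1) \xrightarrow{\ \cdot p_x\ } \ker\Ical(X) \xrightarrow{\ \sym(\pi_x)\ } \ker\Ical( X / x) \to 0,
\]
and verifies well-definedness of both maps, injectivity, exactness in the middle, and surjectivity one at a time. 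The technical device that makes this tractable is the \emph{directional degree function} $\rho_f(\eta)$, the degree of $t\mapsto f(v+t\eta)$ for generic $v$; it records how many differentiations in direction $\eta$ a polynomial survives, and the additivity $\rho_{p_x h}=\rho_{p_x}+\rho_h$ is what drives the well-definedness and middle-exactness computations. Surjectivity is then proved directly on the spanning set $S(\cdot)$, which is precisely the step that your quotient-side ``alternative'' (expressing each $\Ical$-generator as a combination of $\Ical'$-generators, which you correctly note is delicate) is designed to circumvent. If you supply this exactness argument — or its Macaulay dual — the dimension count and hence your apolarity assembly closes, giving $\Pcal=\ker\Ical=\ker\ideal(\Ical'(X))$ and, by the double-annihilator property you already invoked, $\Ical(X)=\ideal(\Ical'(X))$.
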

In this paper, we show that a statement as in  Theorem~\ref{Theorem:CentralMainTheorem} holds in a far more general setting:
we study the kernel of the \emph{hierarchical zonotopal power ideal} 
\begin{align}
\Ical(X,k,J):=\ideal\{ p_{\eta}^{m(\eta)+k + \chi_J(\eta)} : \eta  \neq 0 \}
\end{align}
 where 
$k\ge -1$ is an integer and $\chi_J$ is the indicator function of an upper set $J$ in the lattice of flats of the matroid defined by $X$. 
We examine those spaces in a slightly more abstract setting, \eg
 $\Pcal(X,k,J)$ is contained in the symmetric algebra over an $r$-dimensional $\K$-vector space, where  $\K$ is a field of characteristic zero.

\smallskip
The choice of a sequence of vectors $X$ defines a large number of objects in 
various mathematical fields which are all related to zonotopal algebra \cite{holtz-ron-2011}. Examples include
combinatorics (matroids, matroid and graph polynomials, generalized parking functions and chip firing games  if $X$ is graphic \cite{desjardins-2010,holtz-ron-parking,merino-2001,postnikov-shapiro-2004}), discrete geometry (hyperplane arrangements, zonotopes, and tilings of zonotopes), approximation theory (box splines \cite{BoxSplineBook}, least map interpolation) 
 and algebraic geometry (Cox rings, fat point ideals 
 \cite{ardila-postnikov-2009,geramita-schenck-1998,sturmfels-xu-2010}). 
Central $\Pcal$-spaces  (in our terminology the kernel of $\Ical'(X)$) were introduced in the literature on approximation theory around 1990 \cite{akopyan-saakyan-1988, boor-dyn-ron-1991, dyn-ron-1990}. A dual space called $\Dcal(X)$
appeared almost 30 years ago \cite{boor-hoellig-1982}. 
See \cite[Section 1.2]{holtz-ron-2011} for a historic survey and the book \cite{BoxSplineBook} for a treatment of polynomial
spaces appearing in the theory of box splines.

Recently, Olga Holtz and Amos Ron coined the term \emph{zonotopal algebra}  \cite{holtz-ron-2011}. 
They introduced internal ($k=-1$) and external ($k= +1$) $\Pcal$-spaces and $\Dcal$-spaces.
Federico Ardila and Alexander Postnikov \cite{ardila-postnikov-2009} constructed  $\Pcal$-spaces for arbitrary integers $k\ge -1$.
Olga Holtz, Amos Ron, and Zhiqiang Xu \cite{holtz-ron-xu-2012} introduced hierarchical zonotopal spaces, 
\ie structures that depend 
on the choice of an upper set $J$ in addition to $X$ and $k$. They 
 studied semi-internal and semi-external spaces (\ie $k=-1$ and $k=0$ and some special upper sets $J$). 
A result related to semi-external spaces 
 (a decomposition of the external space in terms of the lattice of flats of $X$) 
 appeared in the work of Peter Orlik and Hiroaki Terao \cite{orlik-terao-1994} on hyperplane arrangements.
The central case was treated in an algebraic setting in \cite{procesi-concini-2006,concini-procesi-book}.
Other related results include \cite{berget-2010,lenz-forwardexchange,li-ron-2011,wagner-1999}. %

In algebra, kernels of ideals where studied by Macaulay already in 1916 under the name \emph{inverse system}
\cite{groebner-1938,abramson-2010,macaulay-1916}.
The first use of inverse systems to study powers of linear forms appeared in a 1995
paper by Jacques Emsalem   and Anthony Iarrobino \cite{emsalem-iarrobino-1995}. 
 A second paper exploring
the connection between powers of linear forms, fatpoints
and splines by
Anthony Geramita and Henry Schenck 
  appeared in 1998 \cite{geramita-schenck-1998}.

The \emph{least map} \cite{deBoor-Ron-1992} assigns to a finite set $S\subseteq\R^r$ of cardinality $m$ an $m$-dimensional space of homogeneous polynomials in $\R[t_1,\ldots, t_r]$. 
Holtz and Ron \cite{holtz-ron-2011} showed that in the internal, central and external case, $\Pcal$-spaces can be obtained via the least map if $X \subseteq \Z^r$ is unimodular\footnote{%
This means that if we interpret the elements of $X$ as the columns of a matrix,
all square submatrices have determinant $0$, $1$, or $-1$.}.
In those cases, the $\Pcal$-spaces are obtained by choosing the set $S$ as
 a certain subset of the set of integral points in the zonotope 
 \begin{align}
 Z(X):= \left\{ \sum_{i=1}^N \lambda_i x_i : 0\le \lambda_i \le 1  \right\}
 \end{align}
In a follow-up paper, we will investigate a similar correspondence in the hierarchical setting.
There is also a discrete theory, where differential operators are replaced by difference operators
(\cite{concini-procesi-book, moci-tutte-2012}, see \cite{moci-adderio-2011} for a related combinatorial structure). In this theory, the connection with lattice points in the zonotope remains valid even if $X$ is not unimodular.
\smallskip

As an example for the connections between zonotopal algebra and combinatorics,
we now explain various relationships between zonotopal spaces and 
 matroid/graph polynomials. 
They can be deduced from the fact that both, the matroid/graph polynomials \cite{ellis-merino-2011} 
and the Hilbert series of the zonotopal spaces \cite{ardila-postnikov-2009} are evaluations of the Tutte polynomial (see also equations \eqref{eq:CentralTutteEvaluation} and \eqref{eq:ExternalTutteEvaluation}).

$\hilb(\Pcal(X,0,\{X\}), t)$ equals the shelling polynomial \cite{bjoerner-1992} $h_{\Delta(X^*)}(t)$  
of the matroid dual to $X$ with the coefficients reversed. The $t^{N-r-i}$ coefficient of $h_{\Delta(X^*)}(t+1)$ 
equals the number of independent sets of cardinality $i$ in the matroid $X$.
Let $X_G$ denote a reduced oriented incidence matrix of a connected graph $G$, \ie a matrix that is obtained from an oriented incidence matrix by 
 deleting one row so that it has full rank. 
The Hilbert series of $\Pcal(X_G,-1,\{X_G\})$
 is related to the flow polynomial  $\phi_G$ (see \cite[Section 6.3]{ellis-merino-2011} for a reference). 
 By duality it is also related to the chromatic  polynomial of the graph, resp.\ the characteristic polynomial of the matroid in the general case.
The connection is as follows: if $G$ is connected, then 
\begin{align}
\phi_G(t)=(t-1)^{N-r}\hilb(\Pcal(X_G,-1,\{X_G\}),1 /(1-t))
\end{align}
The four color theorem is equivalent to the following statement:
if $G$ is a planar graph, then  
 \begin{align}
  \hilb(\Pcal(X_{G^*},-1,\{X_{G^*}\}), -1/3) > 0,
 \end{align}
  where $G^*$ denotes the graph dual to $G$.
Further connections between zonotopal algebra and 
 matroid/graph polynomials are explained in detail in \cite{lenz-2011}.

\paragraph{Organization of the article}

In Section~\ref{section:Background}, we introduce our notation and review the mathematical background. 
In Section~\ref{section:MainTheorem}, we describe the kernels of the ideals $\Ical(X,k,J)$
and define a subideal $\Ical'(X,k,J)$ with finitely many generators. We show that for $k \le 0$, 
the two ideals are equal.

In Section~\ref{section:basis}, we construct bases for the vector spaces $\Pcal(X,k, J)$ for $k\ge 0$. 
We deduce formulas for the Hilbert series of the spaces $\Pcal(X,k, J)$ in Section
\ref{section:HilbertSeries}. 
Those formulas depend only on the matroid structure of $X$ but not on the representation.
In Section~\ref{section:cox}, we apply our results to prove a statement about zonotopal Cox
modules that were defined  by Bernd Sturmfels and Zhiqiang Xu \cite{sturmfels-xu-2010}.
Finally, in Section~\ref{section:examples}, we give plenty of examples.

\subsection*{Acknowledgments}
I would like to thank my advisor Olga Holtz for fruitful discussions of this work. %
I also thank
Federico Ardila who suggested the possibility of the results in 
Section~\ref{section:cox} and Andrew Berget who made some valuable remarks.
An extended abstract of this paper has appeared in the proceedings of FPSAC 2011
\cite{lenz-fpsac-2011}.

\section{Preliminaries}
\label{section:Background}

\paragraph{Notation}
\label{subsect:Notation}

The following notation is used throughout this paper:
$\N:=\{0,1,2,\linebreak[2] 3,\ldots\}$. For $n\in \N$, let $[n]:=\{1,\ldots, n \}$.
 $\K$ is a fixed field of characteristic zero. $V$ denotes a finite-dimensional $\K$-vector space of dimension $r\ge 1$ and $U:=V^*$ its dual.
Our main object of study is a finite sequence $X=(x_1,\ldots, x_N) \subseteq U$ whose elements span $U$.
 We slightly abuse notation by using the symbol $\subseteq$ for subsequences. 
For $Y\subseteq X$,  $X\setminus Y$ 
 denotes the deletion of a subsequence, 
\ie $(x_1,x_2)\setminus (x_1)=(x_2)$ even if $x_1=x_2$.
The order of the elements in $X$ is irrelevant for us except in a few cases, where this is explicitly mentioned.

\paragraph{Matroids and posets}

Let $X=(x_1,\ldots, x_N)$ be a finite sequence whose elements span $U$.
Let $\Mfrak(X):=\{ I \subseteq [N] %
: (x_i : i \in I)  \text{ linearly independent} \}$. Then 
$\Mfrak(X)$ is a %
 matroid of rank $r$ on  $N$ elements. %
$X$ is called a \emph{$\K$-representation} of the matroid $\Mfrak(X)$.
For more information an matroids, see Oxley's book \cite{MatroidTheory-Oxley}.

We now introduce some additional matroid-theoretic concepts. To facilitate notation, we mostly write $X$ instead of $\Mfrak(X)$.

The \emph{rank} of $Y\subseteq X$ is defined as the cardinality of a maximal linearly independent set 
contained in $Y$. It is denoted $\rank(Y)$. 
The \emph{closure} of $Y$ in $X$ is defined as $\clos_X(Y):=\{ x\in X : \rank(Y \cup x)=\rank(Y) \}$.
$C\subseteq X$ is called a \emph{flat} if $C=\clos(C)$. 
A \emph{hyperplane} is a flat of rank $r-1$.
The set of all hyperplanes in $X$ is denoted by $\Hcal = \Hcal(X)$.

Let $C\subseteq X$ be a flat and $\eta\in V$. If $C= \{x\in X : \eta(x)=0\}$, we call $\eta$ a \emph{defining normal} for $C$. 
Note that for hyperplanes, there is a unique defining normal (up to scaling).
The set of bases of the matroid $X$ (\ie the subsequences of $X$ of cardinality $r$ and rank $r$) is denoted $\BB(X)$.
Let $x\in X$. If $x=0$, then $x$ is called a \emph{loop}.  If $\rank(X\setminus x)=r-1$, then $x$ is called a \emph{coloop}.

The set of flats of a given matroid $X$ ordered by inclusion forms a lattice (\ie a poset with joins and meets) called the \emph{lattice of flats} $\Lcal(X)$.
An \emph{upper set} $J\subseteq\Lcal(X)$ is an upward closed set, \ie $C\subseteq C'$, $C\in J$ implies $C'\in J$. 
We call $C\in \Lcal(X)$ a \emph{maximal missing flat}  if $C\not\in J$ and $C$ is maximal with this property.

The \emph{Tutte polynomial} \cite{brylawski-oxley-1992} 
\begin{align}
T_X(x,y) := \sum_{A\subseteq X}(x-1)^{r-\rank(A)} (y-1)^{\abs{A}- \rank(A)}
\end{align}
captures a lot of information about the matroid $\Mfrak(X)$.

\paragraph{Algebra}

$\sym(V)$ denotes the \emph{symmetric algebra} over $V$. This is a base-free version of the ring of polynomials over $V$.
The choice of a basis $B=\{b_1,\ldots, b_n\}\subseteq V$  yields an isomorphism  $\sym(V) \cong \K[b_1,\ldots, b_r]$.
For a definition and more background on algebra, see \cite{concini-procesi-book} or \cite{eisenbud-1995}.

A \emph{graded vector space} is a  vector space $V$  that decomposes into a direct sum $V=\bigoplus_{i\ge 0} V_i$.
 A graded linear map $f: V\to W$ preserves the grade, \ie $f(V_i)$ is contained in $W_i$. 
For a graded vector space, we define its \emph{Hilbert series} as the formal power 
series $\hilb(V,t):=\sum_{i\ge 0} \dim(V_i) t^i$. 
A \emph{graded algebra} $V$ has the additional property $V_i V_j \subseteq V_{i+j}$. 
We use the symmetric algebra with its natural grading. This grading is characterized by the property
that the degree one elements are exactly the ones that are contained in $V\setminus \{0\}$.
A $\Z^n$-\emph{multigraded ring} $R$ is defined similarly: $R$ decomposes into a direct sum $R= \bigoplus_{\vek a\in \Z^n}R_{\vek a}$
and
$R_\vek{a} R_\vek{b} \subseteq R_{\vek a + \vek b}$.

Note that a linear map $f: V\to W$ induces an algebra homomorphism $\sym(f) : \sym(V)\to \sym(W)$.

\smallskip

A \emph{derivation} on $\sym(V)$ is a $\K$-linear map $D$ satisfying Leibniz's law, \ie
$D(fg)=D(f)g+ fD(g)$ for $f,g\in \sym(V)$. For $v\in V=U^*$, we define the \emph{directional derivative} in direction $v$,
$D_v : \sym(U)\to \sym(U)$ as the unique derivation which satisfies $D_v(u)= v(u)$ for all $u\in U$.
For $\K=\R$ and $\K=\C$, this definition agrees with the analytic definition of the directional derivative. 
$\sym(V)$ can be identified with the ring of differential operators on $\sym(U)$.
Namely, $v_1\cdots v_k\in \sym(V)$ acts on $\sym(U)$ by mapping 
$f\in \sym(U)$ to $D_{v_1}(\ldots (D_{v_{k-1}}(D_{v_k}f))\ldots)$.

Now we define a pairing
$\pair{\cdot}{\cdot} : \sym(U) \times \sym(V) \to \K$. 
$\pair{f}{q}$ is defined to be the degree zero component 
of $q\cdot f$, where $q$ acts on $f$ as a differential operator.

\paragraph{Homogeneous ideals and their kernels}

An ideal $\Ical\subseteq \sym(V)$ is called a \emph{power ideal} \cite{ardila-postnikov-2009} if $\Ical= \ideal\{D_\eta^{e_\eta} : \eta \in Z \}$ for some $Z\subseteq V\setminus \{0\}$ and $e_\eta\in \N$.\footnote{%
In the original definition in \cite{ardila-postnikov-2009}, $1$ is added to every exponent $e_\eta$.
} 
$D_\eta$ denotes the image of $\eta$ under the canonical injection $V\hookrightarrow \sym(V)$.
By definition, power ideals are homogeneous.

\begin{Definition}
 Let $\Ical \subseteq \sym(V)$ be a homogeneous ideal. Its  \emph{kernel} (or inverse system 
 \cite{groebner-1938,abramson-2010,macaulay-1916}) 
  is defined as
 \begin{align}
   \ker \Ical &:= \{ f \in \sym(U) : \pair{f}{q} = 0 \text{ for all } q \in \Ical \} 
 \end{align}
\end{Definition}
Let $G$ be a set of generators for $\Ical$. 
Note that
 $\ker \Ical$ can also be written as $\{ f \in \sym(U) : g \cdot f=0 \text{ for all } g\in G \}$, where $g$ acts on $f$ as a differential operator.
It is known that for a homogeneous ideal $\Ical \subseteq \sym(V)$ of finite codimension 
the Hilbert series of $\ker \Ical$ and $\sym(V)/\Ical$ are equal.
For instance, this follows from  \cite[Theorem 5.4]{concini-procesi-book}.\footnote{%
$\ker \Ical$ is sometimes defined slightly differently in the literature:
first note that $\sym(U)$
($\approx$ polynomials) is a subspace of $\sym(V)^*$ ($\approx$ formal power series).
The pairing $\pair{\bullet}{\bullet}$ is  defined on $\sym(V)^* \times \sym(V)$
and $\ker \Ical$ is the subset of $\sym(V)^*$ that is annihilated by $\Ical$. 
It is then proven that if $\Ical$ has finite codimension, then $\ker \Ical$ is contained in $\sym(U)$, \ie in this case both definitions yield the same space.
}

\paragraph{Remark on the notation}

As zonotopal spaces were studied by people from different fields, the notation and the level of abstraction 
 used in the literature varies.
 Authors with a background in spline theory usually work over %
$\R^r$ and identify it with 
 its dual space via the canonical inner product. %
$\Pcal$-spaces and $\Ical$-ideals are then both subsets of the polynomial ring $\R[t_1,\ldots,t_r]$. Other authors
work in an abstract setting as we do. 
Since the Euclidean setting captures all the important parts of the theory,
a reader with no background in abstract algebra may safely make the following substitutions:
$\K=\R$, $U\cong V\cong \R^r$. %
$q\in \R[t_1,\ldots, t_r]=\sym(V)$ acts on $\R[t_1,\ldots, t_r]=\sym(U)$ as the differential operator that is obtained from $q$ by 
substituting $t_i \mapsto \diff{t_i}$.
The pairing can then be written as $\pair{f}{q}= (q\big(\diff{t_1},\ldots, \diff{t_r}\big) (f))(0)$.
We make those substitutions in Section~\ref{section:examples} (Examples).

Some authors work in the dual setting and consider a central hyperplane arrangement  instead of a finite sequence of vectors $X$.
While hierarchical zonotopal power ideals can be defined in both settings, it is natural for us to work with vectors as we are also interested in the zonotope $Z(X)$.

\section{Hierarchical zonotopal power ideals and their kernels}
\label{section:MainTheorem}

In this section, we define hierarchical zonotopal power ideals and show that their kernels have a nice description
as $\Pcal$-spaces.

The first subsection contains the definitions and the statement of the Main Theorem. In the second subsection, we prove some
simple facts and give explicit formulas for the $\Pcal$-spaces in two simple cases.
In the third subsection, we define deletion and contraction for pairs consisting of a matroid and an upper set in its lattice of flats. This is then used to give an inductive proof of the Main Theorem.

\subsection{Definitions and the Main Theorem}
Recall that $U=V^*$ denotes an $r$-dimensional vector space over a field $\K$ of characteristic zero and $X=(x_1, \ldots, x_N)$ denotes a finite sequence whose elements span $U$.

A vector $\eta \in V$ defines a flat $C\subseteq X$. Define $m_X(C)=m_X(\eta):=\card{X\setminus C}$. 
Sometimes, we write $m(C)$ instead of $m_X(C)$.
Given an upper set $J\subseteq \Lcal(X)$,  $\chi_J : \Lcal(X) \to \{0,1\}$ denotes its indicator function.
Again, the index is omitted if it is clear which upper set is meant.
$\chi$ can be extended to the power set of $X$ by %
$\chi(A):=\chi(\clos(A))$ for $A\subseteq X$.

For a given $x\in U$, we denote the image of $X$ under the canonical injection $U\hookrightarrow \sym(U)$ by $p_x$.
For $Y\subseteq X$, we define $p_Y:=\prod_{x\in Y} p_x$. For $\eta\in V$, we write $D_\eta$ for the image of $\eta$ under the canonical injection $V\hookrightarrow \sym(V)$ in order to stress the fact that we mostly think of $\sym(V)$ as the algebra 
generated by the directional derivatives on $\sym(U)$.
\begin{Definition}[Hierarchical zonotopal power ideals and $\Pcal$-spaces]
Let $\K$ be a field of characteristic zero, $V$ be a finite-dimensional $\K$-vector space of dimension $r\ge 1$ and $U=V^*$.
Let $X=(x_1,\ldots, x_N)\subseteq U$ be a finite sequence whose elements span $U$.
Let $k \ge -1$ be an integer and let $J \subseteq \Lcal(X)$ be a non-empty upper set, where $\Lcal(X)$ denotes
the lattice of flats of the matroid defined by $X$.

Let  $\chi : \Lcal(X)\to \{0,1\}$ denote the indicator function of $J$. 
Let $E : \Lcal(X) \to V$ be a normal selector function, \ie a function that assigns a defining normal to every flat.
Now define
\begin{align}
   \Ical'(X, k, J, E) &:= \ideal \Bigl\{D_{E(C)}^{m(C) + k + \chi(C)} : C \text{ hyperplane or %
maximal missing flat} \Bigr\} 
\nonumber
\\
  \Ical(X, k, J) &:= \ideal \left\{ D_{\eta}^{m(\eta)+k + \chi(\eta)} :  \eta \in V \setminus \{0\} \right\} \subseteq \sym(V) 
    \label{eq:DefinitionI} \displaybreak[2]   %
   \\
  \Pcal(X, k, J) &:= \spa S(X,k,J) \subseteq \sym(U) %
  \intertext{where}
   S(X,k,J) &:=  \{ fp_Y : Y\subseteq X, 0 \le \deg f \le \chi(X\setminus Y) + k -1 \}  
                             \quad\; \text{for } k\ge 1 
   \label{eq:DefinitionPgenerators} 
   \\
   S(X,0,J) &:= \{ p_Y : Y\subseteq X,\, \clos(X\setminus Y) \in J \}   \\
   S(X,-1,J) &:= \{ p_Y : \card{Y\setminus C} < m(C) -1 + \chi(C) \text{ for all } C\in \Lcal(X) \setminus \{X\} \}  
   \label{eq:DefinitionInternalPgenerators} 
\end{align}
\label{Definition:PIdefinition}
\end{Definition}
Note that the definition of $S(X,0,J)$ can be seen as a special case of the definition of $S(X,k,J)$ for $k\ge 1$. Therefore, we  distinguish
 only the two cases $k\ge 0$ and $k=-1$ in the proofs.

The condition $X\in J$ is only relevant in the case $k=0$. Then it ensures $1 \in S(X,0,J)$.
One can easily see that in the definition of $S(X,-1,J)$, it is sufficient to check only the inequalities associated to hyperplanes and to maximal missing flats.
If $x$ is a coloop and $X\setminus x \not\in J$, then $S(X,-1,J)=\emptyset$.

For examples, see Section~\ref{section:examples},  Remark~\ref{Prop:PspaceOneDim}, and Proposition~\ref{Prop:PspaceGeneralPosition}.

\begin{Theorem}[Main Theorem]
\sameterminologyasinmaindefinition
For $k=-1$, we assume in addition that $J\supseteq \Hcal$, 
\ie $J$  contains  all 
hyperplanes in $X$.
Then,
 \begin{align}
  \Pcal(X,k,J) &= \ker \Ical(X, k, J) \subseteq \ker \Ical'(X, k, J, E)
 \end{align}
Furthermore, for $k \in \{ -1, 0 \} $, $\Ical'(X,k,J,E)$ is independent of the choice of the normal selector 
 function $E$ and
\begin{align} 
 \Pcal(X,k,J) =  \ker \Ical(X,k,J)=\ker \Ical'(X, k, J, E)
\end{align}
\end{Theorem}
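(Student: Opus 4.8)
The plan is to prove the chain $\Pcal(X,k,J)\subseteq\ker\Ical(X,k,J)\subseteq\ker\Ical'(X,k,J,E)$, then to upgrade the first inclusion to an equality by a Hilbert-series computation, and finally---when $k\le 0$---to show $\Ical'(X,k,J,E)=\Ical(X,k,J)$, which forces everything else. The two inclusions are the easy part. With $\eta=E(C)$, every generator $D_{E(C)}^{m(C)+k+\chi(C)}$ of $\Ical'$ is one of the generators $D_\eta^{m(\eta)+k+\chi(\eta)}$ of $\Ical$, so $\Ical'\subseteq\Ical$ and, $\ker$ being inclusion-reversing, $\ker\Ical\subseteq\ker\Ical'$. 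For $\Pcal(X,k,J)\subseteq\ker\Ical(X,k,J)$ one checks the vanishing on spanning elements: if $\eta\in V\setminus\{0\}$ defines the flat $C$, then $D_\eta$ annihilates exactly the factors $p_x$ of $p_Y$ with $x\in C$, so by Leibniz's rule $D_\eta^{s}(fp_Y)=0$ as soon as $s>\deg f+\abs{Y\setminus C}$; the degree bound in the definition of $S(X,k,J)$, together with the fact that $J$ is an upper set (when $\abs{Y\setminus C}=m(C)$ one has $\clos(X\setminus Y)\subseteq C$, hence $\chi(C)\ge\chi(X\setminus Y)$), forces $\deg f+\abs{Y\setminus C}<m(\eta)+k+\chi(\eta)$ in every case, and the versions of this bookkeeping for $k=0$ and $k=-1$ are exactly what is built into \eqref{eq:DefinitionInternalPgenerators}.

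The substance of the theorem is the reverse inclusion $\ker\Ical(X,k,J)\subseteq\Pcal(X,k,J)$. The ideal $\Ical(X,k,J)$ is homogeneous of finite codimension---it contains $\sym^{M}(V)$ for $M\gg 0$, since the powers $D_\eta^{M}$ already span $\sym^{M}(V)$---so $\dim\ker\Ical=\codim\Ical$, and by the inclusion just proved it is enough to show $\dim\Pcal(X,k,J)=\codim\Ical(X,k,J)$. I would prove this by induction on $\abs{X}$ via deletion and contraction. For a suitably chosen element $x\in X$ one first defines an upper set $J'$ in $\Lcal(X\setminus x)$ and an upper set $J''$ in $\Lcal(X/x)$ by transporting flats along the natural maps between the lattices of flats, and then shows that the Hilbert series $\hilb(\Pcal(X,k,J),t)$ and $\hilb(\sym(V)/\Ical(X,k,J),t)$ obey the same deletion--contraction recursion in terms of the data for $(X\setminus x,k,J')$ and $(X/x,k,J'')$, with base cases---such as $X$ a basis, or $X$ containing a loop---treated directly or by the results recalled in Theorem~\ref{Theorem:CentralMainTheorem}. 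On the ideal side the recursion comes from a short exact sequence of ``multiplication by $D_x$'' type, now keeping track of the degree shift produced by $\chi$; on the $\Pcal$ side one splits the spanning set $S(X,k,J)$ into the elements that do and those that do not involve the factor $p_x$, identifies these two blocks---after dividing by $p_x$ and shifting degrees---with the spanning sets of $\Pcal$ for the deletion and for the contraction, and shows that this splitting is a direct sum of the predicted dimension. Once $\Pcal=\ker\Ical$ is established, the case $k\le 0$ is completed by proving $\Ical'(X,k,J,E)=\Ical(X,k,J)$: each generator $D_\eta^{m(\eta)+k+\chi(\eta)}$ whose $\eta$ defines a flat that is neither a hyperplane nor a maximal missing flat already lies in $\Ical'$, by an argument generalizing the one behind the equality $\Ical(X)=\Ical'(X)$ in Theorem~\ref{Theorem:CentralMainTheorem}; this also yields the independence of $E$, since $\Ical(X,k,J)$ does not depend on $E$.

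The main obstacle is the claim that the deletion/contraction splitting of $\Pcal(X,k,J)$ has no ``diagonal'' relations, so that $\dim\Pcal$ is genuinely the sum of the two smaller dimensions. This forces one to distinguish the cases where $x$ is a loop, a coloop, or an ordinary element, and where $x$ or the relevant maximal missing flats do or do not belong to $J$, and it is here that upward-closedness of $J$ and the behaviour of hyperplanes and maximal missing flats under deletion and contraction have to be matched precisely against the exponents $m(\eta)+k+\chi(\eta)$. The internal case $k=-1$ is the most delicate: this is where the hypothesis $J\supseteq\Hcal$ is needed, and where degenerate situations such as ``$x$ a coloop with $X\setminus x\notin J$, so that $S(X,-1,J)=\emptyset$'' must be verified by hand. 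Finally, $\Ical'=\Ical$ (hence the second displayed equality) holds only for $k\le 0$ because precisely in that range the powers attached to flats that are neither hyperplanes nor maximal missing flats are redundant modulo $\Ical'$; for $k\ge 1$ this redundancy fails, and only the inclusion $\ker\Ical\subseteq\ker\Ical'$ survives.
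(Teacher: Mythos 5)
Your overall framework matches the paper's: the two easy inclusions via a Leibniz computation (Lemma~\ref{Prop:MainTheoremEasyInclusion}), a deletion--contraction induction for the reverse inclusion with the base cases $\dim U=1$ (Remark~\ref{Prop:PspaceOneDim}) and $X$ a basis (Proposition~\ref{Prop:PspaceGeneralPosition}), and $\Ical'=\Ical$ for $k\le 0$ by Lemma~\ref{Lemma:IgleichIprime}. Your Leibniz bookkeeping for $\Pcal\subseteq\ker\Ical$ is correct. The gap lies in your inductive step. You propose to compare a recursion for $\codim\Ical$ coming from a ``multiplication by $D_x$'' exact sequence on $\sym(V)/\Ical$ with a recursion for $\dim\Pcal$ coming from partitioning $S(X,k,J)$ by whether the factor $p_x$ appears, and you flag ``no diagonal relations'' as the main obstacle---but you do not resolve it, and it is not a minor detail. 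Partitioning $S(X,k,J)$ by $x\in Y$ does not give a direct-sum decomposition of $\Pcal(X,k,J)$: an element $fp_Y$ with $x\notin Y$ can still be divisible by $p_x$ through $f$, and $\Pcal(X/x,k,J/x)$, a subspace of $\sym(W)$, is not contained in $\Pcal(X,k,J)$. Nor does the paper prove a short exact sequence for the quotient ring $\sym(V)/\Ical$ at all.

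The paper sidesteps both difficulties by proving the short exact sequence directly for $\ker\Ical$ (Lemma~\ref{Lemma:HilfslemmaHauptsatz}), with maps $\cdot p_x$ and $\sym(\pi_x)$---multiplication by $p_x\in\sym(U)$ and the algebra projection killing $p_x$, not a derivation. The induction hypothesis $\Pcal=\ker\Ical$ for $X\setminus x$ and $X/x$ is used precisely here: it reduces well-definedness of $\cdot p_x$ and surjectivity of $\sym(\pi_x)$ to the concrete $S$-set statements $p_x\,S(X\setminus x,k,J\setminus x)\subseteq S(X,k,J)$ and $\sym(\pi_x)(S(X,k,J))\supseteq S(X/x,k,J/x)$. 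Since this exact sequence of graded vector spaces splits, $\ker\Ical(X,k,J)$ decomposes as a direct sum inside $\sym(W)\oplus p_x\sym(U)$, and sandwiching $\Pcal(X,k,J)$ between that sum and $\ker\Ical(X,k,J)$ via the two $S$-set statements forces $\Pcal=\ker\Ical$ with no separate no-diagonal argument. A secondary point: for $k=-1$ the $S$-set surjectivity of $\sym(\pi_x)$ requires a careful flat-by-flat argument using $J\supseteq\Hcal$ (for $J=\{X\}$ it fails outright and the paper substitutes a dimension count via internal bases, cf.\ Proposition~\ref{Prop:ExactSequence}); you acknowledge the delicacy but do not supply this argument.
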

\begin{Remark}
Example~\ref{example:InternalAdditionalCondition}  explains why 
 there is an additional condition for $k=-1$ (see also Remark \ref{Remark:MainTheoremNotInternal}). 
Holtz, Ron, and Xu  \cite{holtz-ron-xu-2012} define  a different semi-internal structure. 
For a fixed $C_0\in \Lcal(X)$ and $J_{C_0}:=\{ C\in \Lcal(X) : C\supseteq C_0\}$,
 they show $\ker\Ical'(X,-1,J_{C_0})= \bigcap_{x\in C_0} \Pcal(X\setminus x,0,\{X\})$. However, 
they do not have a canonical generating set for this space. See Subsection~\ref{subsect:semiIntHilb}
for more details.
In the same paper, Holtz, Ron, and Xu define semi-external spaces that are the same as ours.
 However, they only identify them with the kernel of a power ideal in the special case where all maximal missing
 flats are hyperplanes.
\end{Remark}
From the Main Theorem and the results in Section \ref{section:HilbertSeries}, one can easily deduce the following two corollaries: 
\begin{Corollary}
In the setting of the Main Theorem, 
\begin{align}
\Pcal(X,k,\Lcal(X)) = \Pcal(X, k+1, \{X\})
\end{align}
\end{Corollary}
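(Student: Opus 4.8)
The plan is to compare the two power ideals $\Ical(X,k,\Lcal(X))$ and $\Ical(X,k+1,\{X\})$ directly from the definition $\Ical(X,k,J) = \ideal\{ D_\eta^{m(\eta)+k+\chi_J(\eta)} : \eta \in V \setminus\{0\}\}$. When $J = \Lcal(X)$, every flat lies in $J$, so $\chi_{\Lcal(X)}(C) = 1$ for all $C$, and the generator exponents are $m(\eta)+k+1$. When $J = \{X\}$, only the top flat lies in $J$; a nonzero $\eta \in V$ defines a proper flat $C \subsetneq X$ (since $X$ spans $U$), so $\chi_{\{X\}}(\eta) = 0$ for every $\eta \neq 0$, and the generator exponents are $m(\eta)+(k+1)+0 = m(\eta)+k+1$. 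Thus the two generating sets of powers of linear forms coincide exponent-by-exponent, whence $\Ical(X,k,\Lcal(X)) = \Ical(X,k+1,\{X\})$ as ideals in $\sym(V)$.

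Next I would invoke the Main Theorem to translate this ideal equality into the claimed equality of $\Pcal$-spaces. For $k \ge 0$ the Main Theorem gives $\Pcal(X,k,\Lcal(X)) = \ker\Ical(X,k,\Lcal(X))$ and, since $k+1 \ge 1 \ge 0$ as well, $\Pcal(X,k+1,\{X\}) = \ker\Ical(X,k+1,\{X\})$; the two kernels agree because the ideals agree. I should check that the hypotheses of the Main Theorem are met on both sides: on the left we need $J = \Lcal(X)$ to be a non-empty upper set (clear) and, when $k=-1$, that $J \supseteq \Hcal$ — which is automatic since $\Lcal(X)$ contains all flats, in particular all hyperplanes. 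On the right, $J = \{X\}$ is a non-empty upper set, and the case $k+1 = -1$ never arises since $k \ge -1$ forces $k+1 \ge 0$. So the Main Theorem applies to both pairs for every admissible $k$, and the corollary follows.

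The only genuinely delicate point is to confirm that the generating sets of $S(X,k,J)$ also match up combinatorially on the level of the defining data, i.e. that the equality is not merely an artifact of passing through kernels — but for the corollary as stated, routing everything through $\ker$ via the Main Theorem makes this unnecessary, so there is no real obstacle; the proof is essentially a one-line observation that $\chi_{\Lcal(X)}(\eta) = 1 = 0 + 1 = \chi_{\{X\}}(\eta) + 1$ for all $\eta \neq 0$, combined with the Main Theorem. If one instead wanted a direct bijection between $S(X,k,\Lcal(X))$ and $S(X,k+1,\{X\})$, the mild bookkeeping would be to observe that for $Y \subseteq X$ one has $\chi_{\Lcal(X)}(X\setminus Y) + k - 1 = 0 + (k+1) - 1 = \chi_{\{X\}}(X\setminus Y) + (k+1) - 1$ whenever $X\setminus Y \neq X$, and to handle the degenerate $Y = X$ case separately; but this refinement is not needed here.
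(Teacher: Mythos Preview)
Your argument is correct and is essentially the approach the paper indicates: the equality of the two ideals is immediate from the observation that $\chi_{\Lcal(X)}(\eta)=1$ and $\chi_{\{X\}}(\eta)=0$ for every nonzero $\eta$ (the latter because $X$ spans $U$, so the flat determined by $\eta$ is always proper), and then the Main Theorem converts equal ideals into equal $\Pcal$-spaces. Your verification of the hypotheses for $k=-1$ is also correct; note that the paper's pointer to the Hilbert series section is not actually needed for this corollary---your direct comparison of ideal generators suffices.
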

\begin{Corollary}
 The Hilbert series of $\Pcal(X,k,J)$ depends only on the matroid $\Mfrak(X)$, but not on the representation $X$.
 \label{Thm:IndependenceRepresentation}
\end{Corollary}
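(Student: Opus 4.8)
The plan is to deduce the statement directly from the Main Theorem together with the Hilbert series formulas promised for Section~\ref{section:HilbertSeries}. By the Main Theorem, $\Pcal(X,k,J) = \ker \Ical(X,k,J)$, and since $\Ical(X,k,J)$ is a homogeneous ideal of finite codimension, the remark following the definition of $\ker$ gives $\hilb(\Pcal(X,k,J),t) = \hilb(\sym(V)/\Ical(X,k,J),t)$. So it suffices to show that this common Hilbert series is a matroid invariant. The key point I would invoke is the (forthcoming) formula expressing $\hilb(\Pcal(X,k,J),t)$ purely in terms of matroid data: for $k \ge 0$ the basis constructed in Section~\ref{section:basis} shows that $\dim \Pcal(X,k,J)_i$ counts certain subsets $Y\subseteq X$ weighted by degrees $0 \le \deg f \le \chi(X\setminus Y)+k-1$, and the combinatorics of which $Y$ contribute depends only on the rank function, the lattice of flats $\Lcal(X)$, and the upper set $J\subseteq\Lcal(X)$ — all of which are determined by $\Mfrak(X)$ alone. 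In fact the relevant enumeration is an evaluation of (a generalization of) the Tutte polynomial $T_X(x,y)$, which by definition depends only on $\rank$ and $\abs{\cdot}$.

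More concretely, the first step is to record that the indexing data in $S(X,k,J)$ — the closures $\clos(X\setminus Y)$, the values $m(C)=\abs{X\setminus C}$, the condition $\clos(X\setminus Y)\in J$, and the inequalities $\abs{Y\setminus C} < m(C)-1+\chi(C)$ — are all phrased entirely in terms of the matroid $\Mfrak(X)$ and the abstract upper set $J$. The second step is to invoke the basis result of Section~\ref{section:basis} (for $k\ge 0$) to conclude that $\hilb(\Pcal(X,k,J),t)$ equals the generating function counting these combinatorially-defined objects by degree, hence is a function of $\Mfrak(X)$ and $J$ only. For the remaining case $k=-1$, one uses instead the explicit Hilbert series formula of Section~\ref{section:HilbertSeries} (again a Tutte-type evaluation), valid under the hypothesis $J\supseteq\Hcal$ carried over from the Main Theorem.

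The main obstacle — and the reason this is only a \emph{Corollary} of later material rather than something provable here — is that the independence is not visible from the definition of $\Pcal(X,k,J)$ as a span of explicit polynomials: a priori, different representations $X$, $X'$ of the same matroid could yield spaces of different dimensions, since the polynomials $p_Y$ and the generators $D_\eta^{m(\eta)+k+\chi(\eta)}$ genuinely depend on the coordinates of the vectors. Overcoming this requires the full strength of the basis construction (to show the natural spanning set is in fact a basis, so that $\dim\Pcal_i$ is exactly the number of combinatorial objects, with no unexpected linear dependencies) together with the identification of $\ker\Ical(X,k,J)$ with $\Pcal(X,k,J)$ from the Main Theorem. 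Once those are in hand, the corollary is immediate because every quantity in the resulting count is manifestly a matroid invariant; so the real work has already been done and this step is essentially bookkeeping.
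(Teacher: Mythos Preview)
Your overall strategy matches the paper's: the corollary is stated as an immediate consequence of the Main Theorem together with the Hilbert-series results of Section~\ref{section:HilbertSeries}, and for $k\ge 0$ your argument via the basis $\Bcal(X,k,J)$ (equivalently, Corollary~\ref{Cor:HilbertreihenCombinatorisch}) is exactly right --- every quantity in the formula (bases, $I(B)$, $E(B)$, $\chi$) is matroid data.

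There is one inaccuracy in your treatment of $k=-1$. You invoke an ``explicit Hilbert series formula of Section~\ref{section:HilbertSeries} (again a Tutte-type evaluation)'', but no such closed-form formula exists for general $J\supseteq\Hcal$: Theorem~\ref{Th:HilbTutteTypeExpression} is only for $k=0$, and Theorem~\ref{Thm:SemiInternalHilbertSeries} only covers the special upper sets $J_{C_0}$. What Section~\ref{section:HilbertSeries} does provide for $k=-1$ are the \emph{recursive} formulas: Corollary~\ref{Th:HilbDelContraction} (deletion--contraction for non-coloops, valid when $J\supseteq\Hcal$ or $J=\{X\}$) and Theorem~\ref{Thm:HilbertRecursion} (the coloop recursion, valid for arbitrary $J$). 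Both recursions, together with the base cases in Remark~\ref{Prop:PspaceOneDim} and Proposition~\ref{Prop:PspaceGeneralPosition}, are phrased purely in terms of $\Mfrak(X)$ and $J$, so an induction on $\abs X$ yields the claim. Replace your appeal to a nonexistent Tutte-type formula by this recursive argument and the proof goes through.
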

\begin{Remark}
One might wonder if similar theorems can be proven for  $k\le -2$. One would of course need to impose extra conditions on 
$X$ to ensure that the exponents appearing in the definition of the ideals are non-negative.
It is easy to see that $\Ical$ and $\Ical'$ are equal in this case (Lemma~\ref{Lemma:IgleichIprime}).
However, we do not know how to construct a generating set for their kernel. 
A different approach would be required: 
in general, their kernel is not spanned by a set of polynomials of type $p_Y$ for some $Y\subseteq X$
\cite{ardila-postnikov-2009}.%
\label{Rem:SmallerThanInternal}
\end{Remark}

\subsection{Basic results}
In this subsection, 
we prove three lemmas that are needed later on and we prove the Main Theorem in two special cases that are the base cases 
for the inductive proof in the next subsection. %

\begin{Lemma}
\label{Lemma:derivativesfaces}
Let $Y\subseteq X$ and let $\eta\in V$ be the defining normal for  $C\subseteq X$. Then,
 \begin{align}
   D_\eta p_Y = p_{Y\cap C} D_\eta p_{Y\setminus C} 
 \end{align}
\end{Lemma}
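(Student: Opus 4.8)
The plan is to prove the identity by unravelling the definition of the directional derivative $D_\eta$ as the unique derivation on $\sym(U)$ with $D_\eta(u) = \eta(u)$ for $u \in U$, and then applying the Leibniz rule to the product $p_Y = \prod_{x \in Y} p_x$. First I would split $Y$ into the disjoint union $Y = (Y \cap C) \,\dunion\, (Y \setminus C)$, so that $p_Y = p_{Y\cap C}\, p_{Y\setminus C}$. The key observation is that since $C = \{x \in X : \eta(x) = 0\}$, for every $x \in Y \cap C$ we have $D_\eta p_x = \eta(x) = 0$, i.e.\ $D_\eta$ kills each factor coming from $Y \cap C$.

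The main step is then a routine induction on $\card Y$ (or, equivalently, a direct application of the general Leibniz rule for a derivation applied to a product). Applying Leibniz's law to $D_\eta(p_{Y\cap C}\, p_{Y\setminus C})$ gives
\begin{align}
  D_\eta p_Y = (D_\eta p_{Y\cap C})\, p_{Y\setminus C} + p_{Y\cap C}\, (D_\eta p_{Y\setminus C}).
\end{align}
For the first summand, expanding $D_\eta p_{Y \cap C}$ by Leibniz over the factors $p_x$, $x \in Y\cap C$, produces a sum of terms each of which contains a factor $D_\eta p_x = \eta(x) = 0$; hence $D_\eta p_{Y\cap C} = 0$ and the first summand vanishes. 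What remains is exactly $p_{Y\cap C}\, D_\eta p_{Y\setminus C}$, which is the claimed formula.

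I do not expect any real obstacle here: the statement is essentially a bookkeeping lemma recording that $D_\eta$ annihilates the linear forms $p_x$ lying in the hyperplane $\eta^\perp$ and differentiates the rest by the product rule. The only point requiring a sentence of care is the vanishing of $D_\eta p_{Y \cap C}$, which should be stated cleanly (either as a small sub-induction on $\card{Y\cap C}$, with base case $p_\emptyset = 1$ and $D_\eta 1 = 0$, or by invoking the general Leibniz formula). Once that is in place the identity follows immediately, so the write-up should be just a few lines.
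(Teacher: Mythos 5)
Your argument is correct and is exactly the paper's one-line proof (``a direct consequence of Leibniz's law''), merely spelled out: split $p_Y = p_{Y\cap C}\,p_{Y\setminus C}$, apply the product rule, and observe that $D_\eta p_x = \eta(x) = 0$ for $x \in C$ kills the first term. Nothing to change.
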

\begin{proof}
 This is a direct consequence of Leibniz's law.
\end{proof}

\begin{Lemma}
 Let $u_1,\ldots, u_s\in U$ and let $k\in \N$. Then,
 \label{Lemma:powersOfSums}
 \begin{align*}
  \spa \{ (\alpha_1 u_1 +\ldots + \alpha_s u_s)^k : \alpha_i \in \K \setminus \{0\}\} = \spa \left\{ \prod_{i=1}^s u_i^{a_i} : \sum_{i=1}^s a_i=k,\, a_i\in \N \right\}
 \end{align*}
\end{Lemma}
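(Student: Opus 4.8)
The plan is to prove both inclusions of the claimed equality of subspaces of $\sym(U)$. Write $L := \spa\{ (\alpha_1 u_1 + \cdots + \alpha_s u_s)^k : \alpha_i \in \K\setminus\{0\}\}$ and $R := \spa\{ \prod_{i=1}^s u_i^{a_i} : \sum a_i = k,\ a_i \in \N\}$.

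For the inclusion $L \subseteq R$: expand each generator of $L$ by the multinomial theorem, $(\alpha_1 u_1 + \cdots + \alpha_s u_s)^k = \sum_{|\vek a| = k} \binom{k}{\vek a} \alpha^{\vek a} \prod_i u_i^{a_i}$, which is visibly a linear combination of the monomials spanning $R$. (One should note the $\alpha_i \neq 0$ restriction is irrelevant here — it only matters, and only helps, for the reverse inclusion.)

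For the inclusion $R \subseteq L$, which is the substantive direction: I would first reduce to the case where $u_1, \ldots, u_s$ are linearly independent in $U$. If they are not, choose a maximal independent subset, express the rest in terms of it, and observe that $R$ is then spanned by the monomials in the chosen independent vectors of total degree $k$ (so the statement for the dependent case follows from the independent case applied to a smaller $s$); similarly $L$ does not change. Assuming independence, it suffices to show each monomial $\prod_i u_i^{a_i}$ with $\sum a_i = k$ lies in $L$. The cleanest route is a Vandermonde/polynomial-identity argument: consider the single-variable substitution $u_i \mapsto t^{w_i} u_i$ for generic integer weights (or directly pick $s$-tuples of nonzero scalars), so that $(\alpha_1 u_1 + \cdots + \alpha_s u_s)^k$ ranges over a family indexed by the $\alpha$'s; the coefficient of a fixed monomial in this family, as the $\alpha_i$ vary over $\K\setminus\{0\}$, is extracted by a nonsingular (generalized Vandermonde) linear system. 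Concretely: fix $\vek a$; we want to show $\prod u_i^{a_i} \in L$. Use induction on $s$. For $s = 1$ it is trivial ($u_1^k$ is already a generator, scaled). For the inductive step, fix $\alpha_2, \ldots, \alpha_s$ nonzero and vary $\alpha_1$ over $k+1$ distinct nonzero values; since $\K$ has characteristic zero it is infinite, so such values exist, and the Vandermonde matrix in these values is invertible, allowing us to solve for each "coefficient of $u_1^{j}$" term, i.e. $u_1^j \cdot (\alpha_2 u_2 + \cdots + \alpha_s u_s)^{k-j} \in L$ for every $j$ and every choice of nonzero $\alpha_2, \ldots, \alpha_s$; then apply the inductive hypothesis to $u_2, \ldots, u_s$ with exponent $k-j$ to conclude $u_1^j \prod_{i \geq 2} u_i^{a_i} \in L$ whenever $j + \sum_{i\geq 2} a_i = k$.

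The main obstacle is purely bookkeeping: making the Vandermonde extraction precise while keeping track of which scalars must be nonzero, and handling the reduction from linearly dependent to linearly independent $u_i$ cleanly (in the dependent case one must check that the monomial span $R$ really does collapse onto monomials in an independent subset — this is immediate since a dependent $u_j = \sum_{i \in B} c_i u_i$ lets one rewrite any monomial involving $u_j$, but one should phrase it so as not to accidentally leave the degree-$k$ graded piece). Everything else is characteristic-zero infinitude of $\K$ plus the multinomial theorem. I expect the write-up to be short: the forward inclusion is one line, and the reverse is the induction on $s$ sketched above, with the base case and the Vandermonde step each occupying a sentence or two.
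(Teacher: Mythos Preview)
Your proof is correct, and the Vandermonde-plus-induction-on-$s$ argument for the hard inclusion is a genuinely different route from the paper's. The paper orders the monomials $\prod_i u_i^{a_i}$ lexicographically and asserts (``by induction'') that one can find, for each monomial, an element of the left-hand span whose leading term is that monomial; the triangular system then shows the monomials lie in the span. Your approach is more explicit and self-contained: the binomial expansion in $\alpha_1$ followed by Vandermonde inversion over $k+1$ nonzero values cleanly isolates each $u_1^j(\alpha_2 u_2+\cdots+\alpha_s u_s)^{k-j}$ in $L$, and the inductive hypothesis finishes. Both arguments use characteristic zero only to guarantee infinitely many scalars and nonvanishing binomial coefficients.

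One remark: your preliminary reduction to linearly independent $u_i$ is unnecessary, and the claim that ``$L$ does not change'' under passing to a maximal independent subset is not obviously true as stated (the constraint $\alpha_i\neq 0$ makes the comparison of the two generating sets delicate). Fortunately, your Vandermonde induction works verbatim in $\sym(U)$ regardless of whether the $u_i$ are independent, since the binomial identity $(\alpha_1 u_1+v)^k=\sum_j\binom{k}{j}\alpha_1^j u_1^j v^{k-j}$ holds in any commutative ring. So you can simply drop the reduction and run the induction directly; this also removes the bookkeeping worry you flagged.
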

\begin{proof}
 ``$\subseteq$'' is clear. %
Let $L$ denote the number of monomials of the 
form %
$\prod_{i=1}^s u_i^{a_i}$
($\sum_i a_i=k$, $a_i \in \N$). Order those monomials lexicographically. 
By induction, 
 we can see that there are polynomials $p_1,\ldots, p_L$ contained in the set on the left \st the leading term of $p_i$ is the $i$th monomial. This implies that all those monomials are contained in the set on the left side.
\end{proof}

\begin{Lemma}
\label{Prop:MainTheoremEasyInclusion}
    $\Pcal(X,k,J) \subseteq \ker \Ical(X,k,J) \subseteq \ker \Ical'(X,k,J)$ 
holds for all $k\ge -1$ and all $J\subseteq \Lcal(X)$. 
\end{Lemma}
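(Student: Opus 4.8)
The plan is to establish the two inclusions separately, with the second being essentially immediate. For the inclusion $\ker \Ical(X,k,J) \subseteq \ker \Ical'(X,k,J)$, observe that $\Ical'(X,k,J)$ is generated by a subset of the generators of $\Ical(X,k,J)$: indeed, every hyperplane and every maximal missing flat $C$ comes with a defining normal $E(C)$, and the corresponding generator $D_{E(C)}^{m(C)+k+\chi(C)}$ is exactly one of the generators $D_\eta^{m(\eta)+k+\chi(\eta)}$ appearing in $\Ical(X,k,J)$ (taking $\eta = E(C)$, noting that $m$ and $\chi$ are constant on the set of normals defining a given flat). Hence $\Ical'(X,k,J) \subseteq \Ical(X,k,J)$, and taking kernels reverses the inclusion. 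This step is purely formal.

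The substance is the first inclusion $\Pcal(X,k,J) \subseteq \ker \Ical(X,k,J)$. Since $\Pcal(X,k,J) = \spa S(X,k,J)$ and the kernel is a linear subspace, it suffices to show that every generator of $S(X,k,J)$ is annihilated (as a differential operator) by every generator $D_\eta^{m(\eta)+k+\chi(\eta)}$ of $\Ical(X,k,J)$, for $\eta \in V \setminus \{0\}$. Fix such an $\eta$ and let $C = \{x \in X : \eta(x)=0\}$ be the flat it defines, so $m(\eta) = \card{X \setminus C}$. The key computational tool is Lemma~\ref{Lemma:derivativesfaces} (really its iterated form): for $Y \subseteq X$ and any $f \in \sym(U)$,
\begin{align*}
 D_\eta^{j}(f p_Y) = \sum_{i=0}^{j} \binom{j}{i} p_{Y \cap C}\, D_\eta^{i}(f)\, D_\eta^{\,j-i}(p_{Y \setminus C}),
\end{align*}
because $D_\eta$ kills each factor $p_x$ with $x \in C$ and acts on the remaining factors by Leibniz. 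Now $p_{Y \setminus C}$ is a product of $\card{Y \setminus C}$ linear forms, none of which is annihilated by $D_\eta$ after enough differentiations; more precisely $D_\eta^{\,j-i} p_{Y\setminus C} = 0$ as soon as $j - i > \card{Y \setminus C}$, and $D_\eta^{i} f = 0$ as soon as $i > \deg f$. So $D_\eta^{j}(fp_Y) = 0$ whenever $j > \deg f + \card{Y \setminus C}$.

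It then remains to check, case by case on $k$, that the exponent $j := m(\eta) + k + \chi(\eta) = \card{X\setminus C} + k + \chi(C)$ exceeds $\deg f + \card{Y\setminus C}$ for every generator $fp_Y \in S(X,k,J)$. For $k \ge 1$ (and the subsumed case $k=0$), the defining condition gives $\deg f \le \chi(X\setminus Y) + k - 1$, and since $\card{Y \setminus C} \le \card{X \setminus C}$ with $\chi(X \setminus Y) \le \chi(C)$ requiring a short monotonicity argument on the upper set $J$ (using $\clos(X\setminus Y) \subseteq$ the relevant flat, or a direct case split according to whether $X \setminus Y \subseteq C$), one gets $\deg f + \card{Y\setminus C} \le \chi(C) + k - 1 + \card{X\setminus C} < j$. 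For $k = -1$, the defining condition of $S(X,-1,J)$ is precisely $\card{Y \setminus C} < m(C) - 1 + \chi(C) = \card{X\setminus C} + \chi(C) - 1 = j - (k+1) + \ldots$; unwinding, $\card{Y\setminus C} \le \card{X\setminus C} + \chi(C) - 2 = j - 1$ since $k=-1$, and here $\deg f = 0$, so again $\deg f + \card{Y\setminus C} < j$. I expect the main obstacle to be the bookkeeping in the $k \ge 0$ case: one must carefully relate $\chi(X\setminus Y)$ to $\chi(C)$ when $X \setminus Y$ is \emph{not} contained in $C$ (so that $\clos(X\setminus Y)$ and $C$ are incomparable), and argue that in that situation $\card{Y\setminus C}$ is small enough to compensate — the clean way is to split on whether $X \setminus Y \subseteq C$ and treat the two sub-cases separately, which is routine but needs to be written out.
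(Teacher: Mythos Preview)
Your proposal is correct and follows essentially the same route as the paper. Both arguments use the Leibniz expansion $D_\eta^{j}(fp_Y) = p_{Y\cap C}\sum_i \binom{j}{i} D_\eta^i f \cdot D_\eta^{j-i} p_{Y\setminus C}$ and the observation that every term vanishes once $j > \deg f + \card{Y\setminus C}$; the only nontrivial point is the interaction between $\chi(X\setminus Y)$ and $\chi(C)$. The paper handles this by contradiction (if a term survives, one forces $\chi(X\setminus Y)=1$, $\chi(C)=0$, and $\card{Y\setminus C}=m(C)$, whence $X\setminus Y\subseteq C$ and $\chi(X\setminus Y)\le\chi(C)$), whereas your case split on whether $X\setminus Y\subseteq C$ is the same argument in direct form. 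Your treatment of $k=-1$ is likewise what the paper dismisses as ``obvious''.
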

\begin{proof}
 The second inclusion is clear.
 For the first, we generalize the proof of  \cite[Theorem 3.5]{holtz-ron-2011}:
 it suffices to prove that every generator of $\Ical(X,k,J)$ annihilates every element of $S(X,k,J)$. 
For $k=-1$, this is obvious. Now consider the case $k\ge 0$.
Let $C$ be a flat, $\eta$ a defining normal for $C$, $Y\subseteq X$, $\deg f\le k + \chi(X\setminus Y)-1$.
Set $e(C):=m(C) + k + \chi(C)$. By Lemma~\ref{Lemma:derivativesfaces},
\begin{align}
   D_\eta^{e(C)} fp_Y %
   & \gleich{\hspace{1mm}} p_{Y\cap C}\sum_{i=0}^{e(C)} \binom{e(C)}{i}  D_\eta^{i} f D_\eta^{e(C)-i} p_{Y\setminus C} \\
   & \gleich{(*)}
    p_{Y\cap C}\sum_{i=k+ \chi(C)}^{k + \chi(X\setminus Y)-1} \binom{e(C)}{i}  D_\eta^{i} f D_\eta^{e(C)-i} p_{Y\setminus C} \label{equation:MainTheoremEasyInclusion} 
\end{align}
$(*)$ holds because $f$ does not survive $k+\chi(X\setminus Y)$ differentiations and $p_{Y\setminus C}$ is annihilated by $m(C)+1$ differentiations. 
Suppose the term in \eqref{equation:MainTheoremEasyInclusion} is not zero. Then $\chi(X\setminus Y)=1$ and $\chi(C)=0$. Furthermore, 
$m(C)$ differentiations in direction $\eta$ do not annihilate $p_{Y\setminus C}$. This is only possible if $Y\setminus C=X\setminus C$.
This implies $X\setminus Y\subseteq C$. Then $\chi(X\setminus Y)\le \chi(C)$. This is a contradiction.
\end{proof}
Now we give explicit formulas for $\Pcal(X,k,J)$ and $\Ical(X,k,J)$ in two particularly simple cases:

\begin{Remark}
 Suppose that $\dim U = 1$ and that $X$ contains $N'$ non-zero entries. Let $x\in U$ and $y\in V$ be non-zero vectors. 
 Note that $\clos(\emptyset)$ is the only hyperplane in $X$.
 Hence,
 $\Ical'(X,k,J) = \Ical(X,k,J) = \ideal \{ D_y^{N' + k + \chi(\emptyset)}  \}$ and $\Pcal(X,k,J) = \spa \{ p_x^i : 
 i\in \{0,1,\ldots, N' - 1 + k + \chi(\emptyset) \}\}$. 
 \label{Prop:PspaceOneDim}
\end{Remark}

\begin{Proposition}
 \label{Prop:PspaceGeneralPosition}
  \sameterminologyasinmaindefinition
  Let $X=(x_1,\ldots,x_r)$ be a basis for $U$. Let $(y_1,\ldots, y_r)$ denote the dual basis of $V$.
  Then, $\Pcal(X,k,J) = \ker \Ical(X,k,J) \subseteq \ker \Ical'(X,k,J,E)$.  %

Furthermore, for $k\in\{-1,0\}$, $\ker \Ical(X,k,J) = \ker \Ical'(X,k,J,E)$ for all normal selector functions $E$.
More precisely,
  writing $p_i:=p_{x_i}$ and $D_i:=D_{y_i}$ as shorthand notation, we get

\begin{align}
 \Ical(X,k,J) & = \ideal \left\{  \prod_{i \in I}D_{i}^{a_i+1}    : %
  \sum_{i\in I} a_i = k +
 \chi( X \setminus \{ x_i : i\in I\}) \right\} 
 \\
 \Pcal(X,k,J) &=  \spa \Biggl \{ \prod_{i\in I}p_{i}^{a_i+1} :  %
  \sum_{i \in I} a_i \le  k + \chi( X \setminus \{x_i : i\in I\}  ) -1 \Biggr\}
 \label{eq:PspaceGeneralPosition}
\end{align}
where $I\subseteq [r]$ and $a_i\in \N$.\\
For $k=0$, this specializes to 
 $\Pcal(X,0,J) =  \spa \left \{ p_Y :   X \setminus Y  \in J \right\}$.\\
For $k=-1$, %
$\Ical(X,-1,J)=\ideal \{D_1,\ldots, D_r \}$ if $J \supseteq \Hcal$ 
and $\Ical(X,-1,J)=\ideal\{1\}$ otherwise.
\end{Proposition}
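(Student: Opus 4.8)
\textbf{Proof proposal for Proposition~\ref{Prop:PspaceGeneralPosition}.}

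The plan is to reduce everything to the very concrete situation where $X$ is a basis, so that $\sym(U)\cong\K[p_1,\dots,p_r]$ and $\sym(V)\cong\K[D_1,\dots,D_r]$ with $D_i$ acting as $\partial/\partial p_i$, and then compute the ideal, its kernel, and the $\Pcal$-space directly and check they agree. First I would unwind the combinatorics of the matroid of a basis: every subset $Y\subseteq X$ is a flat, $\clos(Y)=Y$, the hyperplanes are exactly the $r$ subsets of size $r-1$, and for $\eta\in V\setminus\{0\}$ the associated flat $C$ is $\{x_i : \eta(x_i)=0\}=\{x_i : i\notin\mathrm{supp}(\eta)\}$, so $m(\eta)=|\mathrm{supp}(\eta)|$ is just the number of nonzero coordinates of $\eta$ in the dual basis. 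Writing $\eta=\sum_{i\in I}\alpha_i y_i$ with all $\alpha_i\ne 0$ and $I=\mathrm{supp}(\eta)$, the generator $D_\eta^{m(\eta)+k+\chi(\eta)}$ is $(\sum_{i\in I}\alpha_i D_i)^{|I|+k+\chi(X\setminus\{x_i:i\in I\})}$.

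Next I would apply Lemma~\ref{Lemma:powersOfSums} with $u_j$ replaced by the $D_i$, $i\in I$: the span over all nonzero $\alpha_i$ of these powers equals the span of the monomials $\prod_{i\in I}D_i^{b_i}$ with $\sum b_i=|I|+k+\chi$. Since every such monomial with $\sum b_i\ge|I|+k+\chi$ (and $\mathrm{supp}=I$, i.e.\ all $b_i\ge 1$) is a multiple of one of these, and conversely, I can rewrite $b_i=a_i+1$ to land exactly on the stated generating set $\prod_{i\in I}D_i^{a_i+1}$ with $\sum_{i\in I}a_i=k+\chi(X\setminus\{x_i:i\in I\})$; ranging over all $\eta$ amounts to ranging over all nonempty $I\subseteq[r]$, which proves the formula for $\Ical(X,k,J)$. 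The same Lemma~\ref{Lemma:powersOfSums}-type bookkeeping on the $\Pcal$ side: $S(X,k,J)$ consists of $f p_Y$ with $\deg f\le k+\chi(X\setminus Y)-1$; writing $Y=\{x_i:i\in I\}$ and expanding $f p_Y$ in monomials $\prod p_i^{c_i}$ with $c_i\ge 1$ for $i\in I$, one gets precisely the monomials $\prod_{i\in I}p_i^{a_i+1}$ with $\sum_{i\in I}a_i\le k+\chi(X\setminus Y)-1$ (after absorbing the part of $f$ supported outside $I$ into a larger index set). This gives \eqref{eq:PspaceGeneralPosition}, and the $k=0$ specialization is immediate since then $\chi(X\setminus Y)-1\ge 0$ forces $\chi(X\setminus Y)=1$, i.e.\ $X\setminus Y\in J$, and $f$ must be a constant.

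To finish, I would verify $\Pcal=\ker\Ical$ by a monomial-duality argument: the monomials $\prod p_i^{c_i}$ of $\sym(U)$ are a dual basis to the monomials $\prod D_i^{c_i}$ of $\sym(V)$ under $\pair{\cdot}{\cdot}$ (up to nonzero scalars), so $\ker\Ical$ is spanned by exactly those $p$-monomials not lying in the monomial ideal generated by the $D$-monomials listed above; comparing with \eqref{eq:PspaceGeneralPosition} shows these coincide, using that $\chi$ is monotone on the lattice of flats (an upper set) so the relevant inequalities are consistent across different index sets $I$. Lemma~\ref{Prop:MainTheoremEasyInclusion} already gives $\Pcal\subseteq\ker\Ical\subseteq\ker\Ical'$, so it suffices to produce the reverse inclusion $\ker\Ical\subseteq\Pcal$, which the monomial count supplies. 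For the independence of $E$ when $k\in\{-1,0\}$: in the basis case a hyperplane $C=X\setminus\{x_j\}$ has defining normal $y_j$ up to scalar, so $D_{E(C)}^{m(C)+k+\chi(C)}=\lambda\, D_j^{r-1+k+\chi(C)}$, and the maximal missing flats contribute powers of single $D_j$'s as well (since any flat $X\setminus\{x_i:i\in I\}$ has $\{y_i:i\in I\}$ as its space of normals, but a \emph{single} defining normal only when... ) — here I would simply check directly that $\Ical'$ as generated is the monomial ideal $\ideal\{D_j^{r-1+k+\chi(X\setminus x_j)} : j\in[r]\}$ together with the maximal-missing-flat generators, and that this does not depend on the scalar choices in $E$ because each generator is a pure power of one variable (for hyperplanes) — and then confirm $\ker\Ical'=\ker\Ical$ for $k\le 0$ either from this explicit description or by citing the general $k\le 0$ equality that the Main Theorem establishes. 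The only genuinely delicate point is the last one — matching $\Ical'$ and $\Ical$ and checking $E$-independence when maximal missing flats of rank $<r-1$ occur — but in the basis case every flat's normal space is coordinate-spanned, which keeps all generators monomial and makes the verification mechanical. The $k=-1$ endpoint is the easiest: $m(C)+k+\chi(C)=(r-1)-1+\chi(C)$ for a hyperplane, so if $J\supseteq\Hcal$ then $\chi\equiv 1$ on hyperplanes and every $D_j$ itself is forced into $\Ical$ (taking $I=\{j\}$, $a_j=0$, using $\chi(X\setminus x_j)=1$), giving $\Ical=\ideal\{D_1,\dots,D_r\}$ and $\ker\Ical=\K=\Pcal$; if some hyperplane is missing then the corresponding generator is $D_j^{0}=1$ and $\Ical=\sym(V)$.
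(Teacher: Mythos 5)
Your computation of $\Ical(X,k,J)$ via Lemma~\ref{Lemma:powersOfSums}, your identification of $\Pcal(X,k,J)$ as the stated span of $p$-monomials, and the monomial-duality argument for $\Pcal=\ker\Ical$ all match the paper's proof in substance and are fine. The genuine gap is exactly the place you flag as ``the only genuinely delicate point'' and then wave away: the equality $\Ical'(X,0,J,E)=\Ical(X,0,J)$ and its $E$-independence. Your justification --- that ``in the basis case every flat's normal space is coordinate-spanned, which keeps all generators monomial'' --- is false. A maximal missing flat $C'=X\setminus\{x_i:i\in I'\}$ with $|I'|\ge 2$ has defining normal $\eta=\sum_{i\in I'}\lambda_i y_i$ with all $\lambda_i\ne 0$, so its generator $D_{\eta}^{m(C')}=\bigl(\sum_{i\in I'}\lambda_i D_i\bigr)^{|I'|}$ is a power of a genuine sum, not a monomial, and its shape depends on the scalars in $E$. (You also have a slip: for a hyperplane $C=X\setminus x_j$ one has $m(C)=1$, not $r-1$, though your final conclusion for $k=-1$ is still correct.)

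What the paper does at this point --- and what your proof is missing --- is a two-case argument showing that these non-monomial generators nevertheless generate the same ideal, independently of the scalar choices. First, from the hyperplane generators one gets $D_j^{1+\chi(X\setminus x_j)}\in\Ical'$, hence $D_j^2\in\Ical'$ for every $j$. Then: Case~1, if $C\in J$, the generator $D_{\eta_C}^{m(C)+1}=\bigl(\sum_{i\in I}\alpha_iD_i\bigr)^{|I|+1}$ expands into monomials each of which contains a square, hence lies in $\Ical'$. Case~2, if $C\notin J$, pick a maximal missing flat $C'\supseteq C$ with index set $I'\subseteq I$; in the expansion of $\bigl(\sum_{i\in I'}\lambda_iD_i\bigr)^{|I'|}$ the unique square-free monomial is $q=\prod_{i\in I'}D_i$, so subtracting the square-containing terms (all already in $\Ical'$) yields $q\in\Ical'$, and the expansion of $D_{\eta_C}^{m(C)}=\bigl(\sum_{i\in I}\alpha_iD_i\bigr)^{|I|}$ consists of square-containing monomials plus a multiple of $q$, putting it in $\Ical'$ as well. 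Without some version of this argument your proof does not establish $\Ical'=\Ical$ nor $E$-independence, and the intended reduction of $k=-1$ to $k=0$ (via Lemma~\ref{Lemma:IgleichIprime}) or the direct $k=-1$ check you sketch is fine but doesn't repair the $k=0$ case.
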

\noindent
For a two-dimensional example of this construction, see Example~\ref{Example:coloopRecursion}
 and Figure~\ref{fig:coloopRecursion}.

\begin{proof}
This proof generalizes the  proof of Proposition 4.3 in \cite{ardila-postnikov-2009}.
The statements about $k=-1$ are trivial.

Every flat of $X$ can be written as $C=X\setminus \{ x_i : i \in I  \}$ for some $I\subseteq [r]$.  %
The set of defining normals for $C$ is given by $\bigl\{ \sum_{i\in I} \alpha_i y_i : \alpha_i \in \K \setminus \{0 \} \bigr\}$.

First, we  show that for $k=0$, $\Ical(X,k,J)=\Ical'(X,k,J,E)$.
$y_i$ is the defining normal for the hyperplane $X\setminus x_i$. Hence,
 \begin{align}
D_i^{1+\chi(X\setminus x_i)} \in \Ical'(X,k,J,E) \qquad \text{for } i=1,\ldots,r.
\label{eq:HyperplaneGenerator}
\end{align}
Fix a flat $C$.
Let $D_{\eta_C}^{m(C)+\chi(C)}$ be a generator of $\Ical(X,k,J)$. We prove now that 
$D_{\eta_C}^{m(C)+\chi(C)}$ is contained in $\Ical'(X,k,J,E)$.
\begin{asparaenum}[\itshape C{a}se 1:]
 \item $C\in J$. Hence, $D_{\eta_C}^{m(C)+\chi(C)} = ( \sum_{i\in I}\alpha_i D_{i})^{\card I + 1}$ 
 for some $I\subseteq [r]$ and  $\alpha_i\in \K\setminus \{0\}$. In the monomial expansion of this term, every monomial contains a square. 
 By \eqref{eq:HyperplaneGenerator}, all squares are contained in $\Ical'(X,k,J,E)$.
 \item $C \not\in J$. Let $C'$ be a maximal missing flat that contains $C$. 
Then, 
\begin{align}
 D_{E(C')}^{m(C')+\chi(C')} = \bigl( \sum_{x_i \not \in C'} \lambda_i D_i\bigr)^{m(C')} \in \Ical'(X,k,J,E).
\end{align}
 In the monomial expansion of this polynomial, there is only one monomial that does not contain a square: $q:= \prod_{x_i\not \in C'} D_i$. %
 It follows from the definition of $\Ical'(X,k,J,E)$ and \eqref{eq:HyperplaneGenerator} that $q\in \Ical'(X,k,J,E)$. In the monomial expansion of $D_{\eta_{C}}^{m(C)+\chi(C)}$, there are only monomials containing squares and a monomial that is a multiple of $q$. Hence, 
$D_{\eta_C}^{m(C)+\chi(C)} \in \Ical'(X,k,J,E)$. 
\end{asparaenum}

\bigskip

One can easily see that 
\eqref{eq:PspaceGeneralPosition} describes the $\Pcal$-space
by comparing \eqref{eq:PspaceGeneralPosition} with \eqref{eq:DefinitionPgenerators} and taking into account that 
$X$ is a basis for $U$.

Using Lemma~\ref{Lemma:powersOfSums}, we can calculate $\Ical(X,k,J)$:
\begin{align*}
 \Ical(X,k,J) & = \ideal  \left\{ \left(\sum_{i\in I} \alpha_i D_{i}\right)^{ \card{I} + k + \chi( X \setminus \{x_i : i\in I\})}
 : I\subseteq [r], \alpha_i \in \K\setminus \{0\} 
  \right\}    \\
  & = 
 \ideal \left\{  \prod_{i\in I} D_{i}^{a_i+1}   : I\subseteq [r], a_i \in \N, \sum_{i\in I} a_i=  k +
 \chi( X \setminus \{ x_{i} : i \in I \}) 
\right\}  
\end{align*}
It is now clear that $\ker \Ical(X,k,J)=\Pcal(X,k,J)$.
\end{proof}
The following Lemma implies $\Ical(X,k,J)=\Ical'(X,k,J,E)$ for $k\le 0$,
using the Main Theorem for $k=0$ as base case
(cf. Remark~\ref{Rem:SmallerThanInternal}).
\begin{Lemma}
\label{Lemma:IgleichIprime}
 Let $J\subseteq \Lcal(X)$ be an arbitrary upper set 
 and $k$ be an arbitrary integer. 
 If $\Ical(X,k,J)$ is contained in $\sym(V)$ (\ie $m(C)\ge k$ for all flats  $C$) and $\Ical'(X,k,J,E)=\Ical(X,k,J)$ 
for all normal selector functions $E$,
then
 $\Ical'(X,l,J,E)=\Ical(X,l,J)$ for all $l\le k$ which satisfy $\Ical(X,l,J)\subseteq \sym(V)$ and all normal selector functions $E$.
\end{Lemma}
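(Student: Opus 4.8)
The plan is to show that decreasing $k$ by one at a time preserves the identity $\Ical' = \Ical$, so that an induction downward from $k$ reaches every admissible $l \le k$. By definition the containment $\Ical'(X,l,J,E) \subseteq \Ical(X,l,J)$ is automatic, so the content is the reverse inclusion: every generator $D_\eta^{m(C)+l+\chi(C)}$, with $\eta$ a defining normal for a flat $C$, must lie in $\Ical'(X,l,J,E)$. The natural idea is to relate the level-$l$ ideal to the level-$(l{+}1)$ ideal. Fix any nonzero $v \in V$. Multiplying a generator at level $l$ by $D_v$ gives $D_v D_\eta^{m(C)+l+\chi(C)}$, and if one expands around $\eta$, or more directly uses Lemma~\ref{Lemma:powersOfSums}, one sees that $D_\eta^{e}$ for $e = m(C)+l+\chi(C)$ times $D_\eta$ equals $D_\eta^{e+1}$, which is a level-$(l{+}1)$ generator for the same flat $C$. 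Hence $D_\eta \cdot \Ical(X,l,J) \subseteq \Ical(X,l+1,J) = \Ical'(X,l+1,J,E)$ for every defining normal $\eta$.

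From here I would argue as follows. Let $g = D_\eta^{m(C)+l+\chi(C)}$ be a level-$l$ generator. We know $D_\eta g \in \Ical'(X,l+1,J,E)$, and I claim more: for \emph{every} hyperplane or maximal missing flat $C'$ with defining normal $E(C')$, the polynomial $D_{E(C')} g$ lies in the level-$(l{+}1)$ ideal. Indeed by the previous paragraph $D_{E(C')} g$ is, up to the scalar arising from $\eta$ vs.\ $E(C')$ not being proportional, a sum of level-$(l{+}1)$ generators via Lemma~\ref{Lemma:powersOfSums}; alternatively it is a level-$l$ element multiplied by a linear form and hence lies in $\Ical(X,l+1,J)$ by the displayed containment. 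The point is that $\Ical'(X,l+1,J,E)$, being $\Ical(X,l+1,J)$ by hypothesis, is generated by the level-$(l{+}1)$ power generators, and each of these is $D_\eta$ times a level-$l$ generator. So $\Ical'(X,l+1,J,E) = V \cdot \Ical(X,l,J) \cap (\text{something})$ — more precisely $\Ical'(X,l+1,J,E) \subseteq \sym_{\ge 1}(V)\cdot \Ical(X,l,J) + \Ical'(X,l,J,E)$, and since the hyperplane generators $D_{E(C)}^{m(C)+l+\chi(C)}$ for hyperplanes $C$ are already in $\Ical'(X,l,J,E)$ by definition, one can peel off one factor of a linear form from each generator of $\Ical'(X,l+1,J,E)$ to land inside $\Ical'(X,l,J,E)$. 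Running the base case $l+1 = k+1$ through $l = k$ (here using the Main Theorem at $k=0$ when $k=0$, or the hypothesis at general $k$) and iterating gives $\Ical'(X,l,J,E) = \Ical(X,l,J)$ for all admissible $l \le k$.

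The main obstacle I anticipate is the bookkeeping around which flats contribute generators and making the ``peel off one linear factor'' step precise: $\Ical'$ is generated only by hyperplanes and maximal missing flats, whereas $\Ical$ ranges over all flats, so one must check that lowering $l$ does not create a maximal missing flat whose generator fails to be reachable, and that the colon/ideal-membership manipulations respect the restricted generating set. The cleanest route is probably to phrase everything as: a monomial-free generating argument showing $\Ical(X,l,J) \supseteq \{\,q : D_v q \in \Ical(X,l+1,J)\ \text{for all } v\in V\,\}$ fails in general, so instead one proves directly that each level-$l$ generator $g$ satisfies $g \in \Ical'(X,l,J,E)$ by noting $D_v g \in \Ical(X,l+1,J) = \Ical'(X,l+1,J,E)$ for all $v$, writing $D_v g$ in terms of the level-$(l{+}1)$ generators (each of the form $D_{v'}\cdot(\text{level-}l\text{ generator})$), and using induction on degree together with the fact that $\sym(V)$ has no zero divisors to cancel the common linear factor; the characteristic-zero hypothesis guarantees the relevant binomial coefficients are nonzero so that Lemma~\ref{Lemma:powersOfSums} applies. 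I would expect the write-up to be short once this cancellation mechanism is set up correctly.
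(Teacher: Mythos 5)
Your proposal contains a genuine gap at the crucial step. In the ``cleanest route'' paragraph you assert that for a level-$l$ generator $g=D_\eta^{m(C)+l+\chi(C)}$ one has $D_v g\in\Ical(X,l+1,J)$ for \emph{all} $v\in V$. This is false in general: take $X$ to be the standard basis of $\R^2$, $J=\{X\}$, $l=0$ and $g=D_1$ (a level-$0$ generator for the hyperplane $\{x_2\}$). Then $\Ical(X,1,\{X\})=(D_1^2,D_2^2)$, but $D_2 g=D_1D_2\notin(D_1^2,D_2^2)$. Only $D_\eta g$ (multiplication by the \emph{same} normal) lands in $\Ical(X,l+1,J)$, so the stepping-down mechanism you then invoke --- writing $D_v g$ in terms of level-$(l+1)$ generators and ``cancelling the common linear factor'' --- has no correct starting point, and there is in any case no common factor to cancel: the $q_i$ and the $D_{\eta_i}$ in an expression $\sum q_i D_{\eta_i}^{e_i+1}$ need not share a factor of $D_\eta$. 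Your second paragraph's inclusion $\Ical'(X,l+1,J,E)\subseteq\sym_{\ge1}(V)\cdot\Ical(X,l,J)+\Ical'(X,l,J,E)$ is the easy direction ($\Ical'(X,l+1,J,E)\subseteq\Ical'(X,l,J,E)$ is even immediate) and gets you nothing toward the required $\Ical(X,l,J)\subseteq\Ical'(X,l,J,E)$.

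The missing idea is that the descent from level $l+1$ to level $l$ should be implemented not by multiplying or cancelling but by \emph{differentiating}. The paper picks $u\in U$ with $\eta(u)=1$, regards $p_u$ as a derivation on $\sym(V)$, and applies it to the inductive identity $D_\eta^{m(\eta)+l+1+\chi(\eta)}=\sum_i q_i D_{\eta_i}^{m(\eta_i)+l+1+\chi(\eta_i)}$. On the left this produces $(m(\eta)+l+1+\chi(\eta))\,D_\eta^{m(\eta)+l+\chi(\eta)}$, and on the right the Leibniz rule yields a sum of multiples of $D_{\eta_i}^{m(\eta_i)+l+\chi(\eta_i)}$, i.e.\ of generators of $\Ical'(X,l,J,E)$; dividing by the nonzero constant (characteristic zero, and the exponent is $\ge0$ by the hypothesis $\Ical(X,l,J)\subseteq\sym(V)$) finishes. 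A single linear differential operator both lowers the exponent on the target $D_\eta$ and, via Leibniz, keeps every term on the right inside the level-$l$ ideal; multiplication cannot do the second of these.
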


\begin{proof}
 Let $D_\eta^{m(\eta) + l + \chi(\eta)}$ be a generator of $\Ical(X,l,J)$. We show that this generator is contained in 
 $\Ical'(X,l,J,E)$.
By induction, we may suppose that $D_\eta^{m(\eta) + l+1 + \chi(\eta)}\in \Ical'(X,l+1,J)$, \ie there exist
$q_i\in \sym(V)$ and  $D_{\eta_i}^{m(\eta_i) + l + 1 + \chi(\eta_i)}$ generators of $\Ical'(X,l+1,J)$ \st
\begin{align}
D_\eta^{m(\eta) + l + 1 + \chi(\eta)} = \sum_{i} q_i D_{\eta_i}^{m(\eta_i) + l + 1 + \chi(\eta_i)} 
\label{eq:IgleichIprimeInd}
\end{align}
Let $u\in U$ be a vector \st $\eta(u)=1$. We consider $p_u$ as a differential operator on $\sym(V)$. By applying 
$p_u$ to \eqref{eq:IgleichIprimeInd}, 
we see that $D_\eta^{m(\eta) + l + \chi(\eta)}$ is contained in $\Ical'(X,l,J,E)$.
\end{proof}

\subsection{Deletion and Contraction}
\label{subsection:delcon}
In the third paragraph of this subsection, we  prove the Main theorem. 
The proof is inductive using deletion and contraction. 
In the first paragraph, we define those two operations for representable matroids.
In the second paragraph, we  define them for upper sets.

\subsubsection{Matroids under deletion and contraction}

Two important constructions for matroids are deletion and contraction of an element.
Let $x \in X$. The \emph{deletion} of $x$ is the matroid defined by the sequence $X\setminus x$. 

For the rest of this paragraph, fix an element $x\in X$ that is not a loop. Let $\pi_x : U \to U/x$ denote the projection to the quotient space.  The \emph{contraction} of $x$ is the matroid defined by the sequence $X/x$ which contains 
the images of the elements of $X\setminus x$ under $\pi_x$.

We want to be able to see $\sym(U/x)$ as a subspace of $\sym(U)$. For that, pick a basis 
 $B=\{b_1,\ldots, b_{r}\}\subseteq U$ with $b_r=x$.
Let $W:=\spa\{b_1,\ldots, b_{r-1}\}$. Then we have an isomorphism $U/x\cong W$ which extends to an isomorphism 
$\sym(U/x)\cong \sym(W)\subseteq \sym(U)$. Under this identification, $\sym(\pi_x)$ becomes the map that sends
$p_x$ to zero and maps all other basis vectors to themselves. Then $\sym(U)\cong \sym(W)\oplus p_x \sym(U)$. 

Let $Y\subseteq X\setminus x$. We  write $\bar Y$ to denote the subsequence of $X/x$ with the same index set as $Y$ and vice versa.
 Let $\bar C\subseteq X/x \subseteq W$ be a flat and $\eta \in W^*$
 be a defining normal for $\bar C$.
 Since $W^*\cong x^\perp:= \{ v \in V : v(x) = 0\}$, $\eta$ is also a defining normal for the flat $C\cup \{x\}\subseteq X$.

\subsubsection{The lattice of flats under deletion and contraction}

In this paragraph, we discuss how the lattice of flats of a matroid behaves under deletion and contraction and for a given upper set $J$ we define upper sets $J\setminus x \subseteq \Lcal(X\setminus x)$ and $J/x \subseteq \Lcal(X/x)$.

For the whole paragraph, fix an element $x\in X$ that is not a loop.
First, we exhibit some relations between the lattices of flats of $X$, $X\setminus x$ and $X/x$.
There are two bijective maps:
\begin{align}
 L_x : \Lcal(X\setminus x)  & \to  %
 \{ C \in \Lcal(X) : C = \clos(C\setminus x) \}  \\ %
 L^x :  \Lcal(X / x)  & \to  \{ C \in \Lcal(X) : x\in C  \} 
\end{align}
The maps are given by $L_x(C) := \clos\nolimits_X(C)$, $L^{-1}_x(C) :=C\setminus x$,  $L^x(\bar C) := C\cup x$ and
$(L^x)^{-1}(C) :=\overline{C\setminus x}$.

\begin{Definition}
Let $J\subseteq \Lcal(X)$ be an upper set. Then define
\begin{align*}
J\setminus x &:= \{ C \setminus x : C\in J \text{ and } C=\clos(C\setminus x)
\} 
=  L_x^{-1}( J \cap L_x(\Lcal(X \setminus x))) \subseteq \Lcal(X\setminus x) \\
J/x &:= \{ (\overline{C \setminus x}) : x\in C \in J  \} 
= (L^x)^{-1}(J \cap L^x(\Lcal(X/x))) \subseteq \Lcal(X / x)
\end{align*}
\end{Definition}
It is easy to check that those two sets are upper sets.
The following statement on the indicator functions is also easy to prove:
\begin{Lemma}
Let $x\not\in Y\subseteq X$. Then
$\chi_{J\setminus x}(Y)=\chi_J(Y)$ and 
 $\chi_{J/x}(\bar Y)= \chi_J(Y\cup x)$.
\label{Lemma:CharFuncDelCon}
\end{Lemma}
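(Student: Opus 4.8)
The plan is to prove both identities by unwinding the definitions of $J\setminus x$ and $J/x$ together with the action of closure in the three matroids $X$, $X\setminus x$, and $X/x$. Recall that $\chi_J(Y)$ is shorthand for $\chi_J(\clos_X(Y))$ (and similarly $\chi_{J\setminus x}(Y)=\chi_{J\setminus x}(\clos_{X\setminus x}(Y))$, etc.), so the content of the lemma is really a pair of statements about when the relevant closures land in the respective upper sets.

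For the first identity $\chi_{J\setminus x}(Y)=\chi_J(Y)$ with $x\notin Y\subseteq X$: the key observation is the relation between the two closure operators. Write $C:=\clos_X(Y)$ and $C':=\clos_{X\setminus x}(Y)$. I would first verify that $C'=C\setminus x$ always, and moreover that $C=\clos_X(C\setminus x)$, \ie $C$ is in the image of $L_x$. (If $x\in C$ then $x\in\clos_X(Y)$; since $C\setminus x$ is a flat of $X\setminus x$ of the same rank as $C$, one checks $\clos_X(C\setminus x)=C$ directly from the definition of closure. If $x\notin C$ this is trivial.) Hence $\clos_{X\setminus x}(Y)=L_x^{-1}(C)$. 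By the definition of $J\setminus x = L_x^{-1}(J\cap L_x(\Lcal(X\setminus x)))$ and the fact just noted that $C\in L_x(\Lcal(X\setminus x))$, we get $C'\in J\setminus x \iff C\in J$, which is exactly $\chi_{J\setminus x}(Y)=\chi_J(Y)$.

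For the second identity $\chi_{J/x}(\bar Y)=\chi_J(Y\cup x)$: here the relevant fact is that closure in $X/x$ corresponds, under the bijection $L^x$, to closure in $X$ of sets containing $x$. Concretely, for $Y\subseteq X\setminus x$ I would show $L^x(\clos_{X/x}(\bar Y)) = \clos_X(Y\cup x)$, which follows because adding $x$ back and taking closure in $X$ is the standard description of contraction. Then $\bar Y\in J/x$, \ie $\clos_{X/x}(\bar Y)\in J/x = (L^x)^{-1}(J\cap L^x(\Lcal(X/x)))$, holds iff $L^x(\clos_{X/x}(\bar Y))\in J$, iff $\clos_X(Y\cup x)\in J$, iff $\chi_J(Y\cup x)=1$; and since $x\notin Y$ the set $Y\cup x$ is a genuine subsequence of $X$ so $\chi_J(Y\cup x)$ is well-defined. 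Combining gives $\chi_{J/x}(\bar Y)=\chi_J(Y\cup x)$.

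The only mildly delicate point — and the one I would expect to be the main obstacle, though it is still routine — is checking the compatibility of closure with deletion and contraction, namely $\clos_{X\setminus x}(Y)=\clos_X(Y)\setminus x$ and $L^x(\clos_{X/x}(\bar Y))=\clos_X(Y\cup x)$, and in particular that the closures in $X$ appearing here really do lie in the images of $L_x$, $L^x$ so that the preimages under these bijections make sense. Both are standard matroid facts (\eg they follow from the rank identities $\rank_{X\setminus x}(Y)=\rank_X(Y)$ and $\rank_{X/x}(\bar Y)=\rank_X(Y\cup x)-1$), so I would simply cite \cite{MatroidTheory-Oxley} or dispatch them in a line rather than belabor them. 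Everything else is formal manipulation of the definitions of $J\setminus x$ and $J/x$.
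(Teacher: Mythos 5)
Your proof is correct. The paper does not actually provide a proof of this lemma — it is dismissed as ``easy to prove'' immediately after the definition of $J\setminus x$ and $J/x$ — and your argument via the bijections $L_x,\,L^x$ and the compatibility of closure with deletion and contraction (namely $\clos_{X\setminus x}(Y)=\clos_X(Y)\setminus x$ and $L^x(\clos_{X/x}(\bar Y))=\clos_X(Y\cup x)$, including the check that the relevant closures lie in the images of $L_x$ and $L^x$) is exactly the routine verification the paper is implicitly leaving to the reader.
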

From this, we can deduce the following:
\begin{Lemma}
\begin{compactenum}[(1)]
\item If $C\subseteq X\setminus x$ is a maximal missing flat for $J\setminus x$ then $C$ or $C\cup x$ is a maximal missing flat for $J$.
\item 
If $\bar C\subseteq X / x$ is a maximal missing flat for $J/ x$ then $C\cup x$ is a maximal missing flat for $J$.
\end{compactenum}
\label{Lemma:MaxMissingFlatDelCon}
\end{Lemma}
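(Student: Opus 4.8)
The plan is to argue entirely inside the lattices of flats, using the bijections $L_x$, $L^x$ and Lemma~\ref{Lemma:CharFuncDelCon}, together with two standard matroid facts: for any flat $D'$ of $X$ the set $D'\setminus x$ is a flat of $X\setminus x$ with $\clos_X(D'\setminus x)\subseteq D'$, and for any flat $C$ of $X\setminus x$ one has $\clos_X(C)\in\{C,\,C\cup x\}$. I also use that $X\in J$ (a non-empty upper set contains the top flat), so every flat not in $J$ lies below some maximal missing flat for $J$, and that $L^x$ is an order isomorphism onto the interval of flats of $X$ containing $x$.

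For part (1): let $C$ be a maximal missing flat for $J\setminus x$ and set $\hat C:=L_x(C)=\clos_X(C)$, so that $\hat C\in\{C,\,C\cup x\}$ and $\hat C\setminus x=C$. First, $\hat C\notin J$, since otherwise $C\in J\setminus x$ by Lemma~\ref{Lemma:CharFuncDelCon}, contradicting that $C$ is missing. Choose a maximal missing flat $M$ of $J$ with $M\supseteq\hat C$. The heart of the argument is the claim that \emph{every flat $D'$ of $X$ with $D'\supsetneq\hat C$ and $D'\ne C\cup x$ lies in $J$}. Granting it, the missing flat $M$ can only be $\hat C$ or $C\cup x$, so (since $\hat C\in\{C,C\cup x\}$) we get $M\in\{C,\,C\cup x\}$, and $M$ is the maximal missing flat for $J$ we want. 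To prove the claim: $D'\setminus x$ is a flat of $X\setminus x$ containing $\hat C\setminus x=C$; if $D'\setminus x=C$ then $D'\subseteq C\cup x$, which with $D'\supsetneq\hat C$ forces $D'=C\cup x$ when $\hat C=C$ and is impossible when $\hat C=C\cup x$; hence $D'\setminus x\supsetneq C$. By maximality of $C$ we get $D'\setminus x\in J\setminus x$, so $\clos_X(D'\setminus x)\in J$ by Lemma~\ref{Lemma:CharFuncDelCon}, and since $D'\supseteq\clos_X(D'\setminus x)$ and $J$ is upward closed, $D'\in J$.

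For part (2): let $\bar C$ be a maximal missing flat for $J/x$ and put $C\cup x:=L^x(\bar C)$, a flat of $X$ containing $x$. By Lemma~\ref{Lemma:CharFuncDelCon}, $\chi_J(C\cup x)=\chi_{J/x}(\bar C)=0$, so $C\cup x\notin J$. Every flat $D''$ of $X$ with $D''\supsetneq C\cup x$ contains $x$, hence equals $L^x(\bar D)$ for a flat $\bar D$ of $X/x$ with $\bar D\supsetneq\bar C$; maximality of $\bar C$ gives $\bar D\in J/x$, and then $\chi_J(D'')=\chi_{J/x}(\bar D)=1$ by Lemma~\ref{Lemma:CharFuncDelCon}, \ie $D''\in J$. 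Thus $C\cup x$ is a maximal missing flat for $J$.

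The main obstacle I anticipate is the small case distinction inside the claim in (1) — keeping the two possibilities $\hat C=C$ and $\hat C=C\cup x$ apart and checking both — but once the claim is phrased as above, everything else is formal bookkeeping with closures, deletion, and the upper-set property; in particular no rank computations are needed, even though a rank-based argument is the first thing one is tempted to try.
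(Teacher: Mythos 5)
Your proof is correct; the paper leaves the lemma unproved (it merely announces that it can be deduced from Lemma~\ref{Lemma:CharFuncDelCon}), and your argument via the lattice bijections $L_x$ and $L^x$ is exactly the intended route. The only delicate step is the case analysis in part~(1) around $\hat C = \clos\nolimits_X(C)\in\{C,\,C\cup x\}$, together with the observation that $D'\setminus x$ is a flat of $X\setminus x$ whose closure in $X$ is contained in $D'$, and you handle both cleanly.
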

We also need the following two facts:
\begin{Remark}
 Let $x\in X$ be neither a loop nor a coloop.
 Suppose that $J\supseteq \Hcal(X) $.
Then,
\begin{compactenum}[(1)]
 \item    $J\setminus x \supseteq \Hcal(X\setminus x)$.
  This follows from the fact that $\Lcal(X\setminus x)$ contains exactly the flats $C$ that satisfy $\rank(C)=\rank(C\setminus x)$.
 \item   $J / x \supseteq \Hcal(X/x)$.
  This follows from the fact that $\Lcal(X/x)$ contains exactly the flats containing $x$ and 
 the fact that contraction reduces the rank of a flat containing $x$ by one.
\end{compactenum}
\label{Rem:InternalIdealDelCon}
\end{Remark}

\subsubsection{Proof of the Main Theorem}

In this paragraph, we prove the Main Theorem. The following proposition is a side product of the deletion-contraction proof:
\begin{Proposition}
\sameterminologyasinmaindefinition
Suppose that $x\in X$ is neither a loop nor a coloop. %
For $k=-1$, we assume in addition that $J \supseteq  \Hcal$  %
or $J=\{X\}$, \ie $J$  contains either all or no 
hyperplanes in $X$.

Then the following is an exact sequence of  graded vector spaces:
 
\begin{align}
 0 \to \ker\Ical(X\setminus x, k, J\setminus x) (-1) \stackrel{\cdot p_x}{\longrightarrow} \ker\Ical(X,k,J) 
  \stackrel{\sym(\pi_x)}{\longrightarrow} \ker\Ical( X / x, k, J/x) \to 0
\end{align}

If $x\in X$ is a loop, then $\ker \Ical(X \setminus x, k, J \setminus x)=\ker \Ical(X, k, J)$.
For $k \in \{ -1, 0 \}$, both statements also hold if we replace $\Ical$ by $\Ical'$.

Here, $(\cdot)(-1)$ denotes the graded vector space $(\cdot)$ with the degree shifted up by one and $\sym(\pi_x)$ denotes the algebra homomorphism that maps $p_v$ to $p_{\pi_x(v)}$.
\label{Prop:ExactSequence}
\end{Proposition}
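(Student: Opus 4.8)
The statement I want to prove is Proposition~\ref{Prop:ExactSequence}: for $x$ neither a loop nor a coloop, the sequence
\[
 0 \to \ker\Ical(X\setminus x, k, J\setminus x) (-1) \stackrel{\cdot p_x}{\longrightarrow} \ker\Ical(X,k,J)
  \stackrel{\sym(\pi_x)}{\longrightarrow} \ker\Ical( X / x, k, J/x) \to 0
\]
is exact, with the analogous claim for $\Ical'$ when $k\in\{-1,0\}$, and the easy loop case. The plan is to recall from the preliminaries that $\ker\Ical$ can be described as $\{f\in\sym(U): D_\eta^{e(\eta)}f=0 \text{ for all }\eta\neq 0\}$, where $e(\eta)=m(\eta)+k+\chi(\eta)$ (and likewise with $\Ical'$, using only hyperplanes and maximal missing flats). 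I will then check the three nontrivial points of exactness separately, namely injectivity of $\cdot p_x$, that $\image(\cdot p_x)=\ker(\sym(\pi_x))$ on the kernel spaces, and surjectivity of $\sym(\pi_x)$; the loop case and the middle ``composition is zero'' inclusion are immediate. Throughout I will use the decomposition $\sym(U)\cong\sym(W)\oplus p_x\sym(U)$ from Subsection~\ref{subsection:delcon} and the fact that $\sym(\pi_x)$ kills $p_x$ and restricts to the identity on $\sym(W)\cong\sym(U/x)$.

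\textbf{Key steps.} First, injectivity of multiplication by $p_x$: this is free, since $\sym(U)$ is an integral domain, and the degree shift is built into the notation $(-1)$. Second, $\sym(\pi_x)\circ(\cdot p_x)=0$: again immediate, since $\sym(\pi_x)(p_x g)=0$. Third, I must show $\sym(\pi_x)$ maps $\ker\Ical(X,k,J)$ \emph{into} $\ker\Ical(X/x,k,J/x)$: given $f\in\ker\Ical(X,k,J)$, write $f=g+p_x h$ with $g\in\sym(W)$; then $\sym(\pi_x)f=g$ and I need $D_\eta^{\bar e(\eta)}g=0$ for every defining normal $\eta$ of a flat $\bar C$ of $X/x$. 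Here I use the remark at the end of the deletion-contraction paragraph: such an $\eta\in W^*\cong x^\perp$ is also a defining normal for the flat $C\cup x$ in $X$, and Lemma~\ref{Lemma:CharFuncDelCon} gives $\chi_{J/x}(\bar C)=\chi_J(C\cup x)$ while $m_{X/x}(\bar C)=m_X(C\cup x)$ (both count elements outside $C\cup x$, which is the same multiset under $\pi_x$). So $\bar e(\eta)$ matches $e(\eta)$ computed in $X$ for the flat $C\cup x$; since $\eta(x)=0$, $D_\eta$ commutes with $p_x$ and annihilates it, so $D_\eta^{e(\eta)}f = D_\eta^{e(\eta)}g = 0$ forces $D_\eta^{\bar e(\eta)}g=0$. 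This gives $\sym(\pi_x)f\in\ker\Ical(X/x,k,J/x)$.

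\textbf{The hard part.} The real work is the reverse inclusion $\ker(\sym(\pi_x))\cap\ker\Ical(X,k,J)\subseteq p_x\cdot\ker\Ical(X\setminus x,k,J\setminus x)$, together with surjectivity of $\sym(\pi_x)$ — equivalently, proving exactness in the middle and on the right. For the middle: if $f\in\ker\Ical(X,k,J)$ has $\sym(\pi_x)f=0$, then $f=p_x h$ for some $h\in\sym(U)$, and I must show $h\in\ker\Ical(X\setminus x,k,J\setminus x)$, i.e.\ $D_\zeta^{e'(\zeta)}h=0$ for every defining normal $\zeta$ of a flat $C'$ of $X\setminus x$, where $e'(\zeta)=m_{X\setminus x}(C')+k+\chi_{J\setminus x}(C')$. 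The subtlety is that a flat $C'$ of $X\setminus x$ corresponds via $L_x$ to a flat $C=\clos_X(C')$ of $X$ (with $C\setminus x = C'$), and one has to relate $m_X(C)$, $m_{X\setminus x}(C')$ and the two indicator values — here Lemma~\ref{Lemma:CharFuncDelCon}, Lemma~\ref{Lemma:MaxMissingFlatDelCon} and the case analysis of whether $x\in C$ or not enter, and whether $C$ or $C\cup x$ is the relevant flat for $J$. One then applies $D_\zeta^{e'(\zeta)}$ to $f=p_x h$, expands via Leibniz (Lemma~\ref{Lemma:derivativesfaces}) according to $\zeta(x)$, and reads off $D_\zeta^{e'(\zeta)}h=0$ from $f\in\ker\Ical(X,k,J)$, checking that the exponent $e_X$ of the appropriate flat in $X$ is exactly $e'(\zeta)+1$ when $\zeta(x)\ne 0$ and exactly $e'(\zeta)$ when $\zeta(x)=0$. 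For surjectivity on the right one can use the Main Theorem inductively (the proposition is, after all, the engine of that induction): the dimensions of $\ker\Ical(X\setminus x,\cdot)$, $\ker\Ical(X,\cdot)$, $\ker\Ical(X/x,\cdot)$ are governed by the Hilbert-series / Tutte-polynomial deletion-contraction recursion, so exactness at the left and middle plus the dimension identity $\dim\ker\Ical(X)=\dim\ker\Ical(X\setminus x)+\dim\ker\Ical(X/x)$ forces surjectivity; alternatively one exhibits a preimage of $g\in\ker\Ical(X/x,k,J/x)$ directly by a lifting argument inside $\sym(W)$ and corrects by an element of $p_x\sym(U)$. Handling the $k=-1$ hypothesis $J\supseteq\Hcal$ (or $J=\{X\}$) correctly in all these flat-by-flat exponent comparisons, via Remark~\ref{Rem:InternalIdealDelCon}, is the most delicate bookkeeping; the $\Ical'$ versions then follow by restricting every ``for all flats'' quantifier to hyperplanes and maximal missing flats and invoking Lemma~\ref{Lemma:MaxMissingFlatDelCon} so that no missing flat is overlooked. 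The loop case is trivial: a loop contributes nothing to any $m(\eta)$ and does not change the lattice of flats, so the ideals and kernels literally coincide.
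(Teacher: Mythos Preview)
Your overall architecture matches the paper's: both arguments check well-definedness of the two maps, injectivity on the left, and exactness in the middle by direct computation with the differential operators $D_\eta$, comparing the exponents $e_X(C)$, $e_{X\setminus x}(C')$, $e_{X/x}(\bar C)$ via Lemma~\ref{Lemma:CharFuncDelCon} and Lemma~\ref{Lemma:MaxMissingFlatDelCon}. (The paper packages these comparisons using the directional degree function $\rho_f(\eta)$, but that is only notation.) Two omissions are worth flagging. First, you never verify that $\cdot\, p_x$ actually lands in $\ker\Ical(X,k,J)$; this is not automatic and needs the same flat-by-flat exponent comparison you sketch for exactness in the middle, together with the observation $\chi_{J\setminus x}(C')\le\chi_J(C)$ for $C'=C\setminus x$. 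Second, your claim ``$D_\eta^{e(\eta)}f=D_\eta^{e(\eta)}g$'' is wrong as written: one has $D_\eta^{e}f=D_\eta^{e}g+p_xD_\eta^{e}h$, and the conclusion $D_\eta^{e}g=0$ follows from the direct sum $\sym(U)=\sym(W)\oplus p_x\sym(U)$.

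The real gap is surjectivity. Your dimension argument is circular: you invoke ``the dimension identity $\dim\ker\Ical(X)=\dim\ker\Ical(X\setminus x)+\dim\ker\Ical(X/x)$'', but at this stage of the induction nothing is known about $\dim\ker\Ical(X)$; the deletion--contraction recursion for the Hilbert series (Corollary~\ref{Th:HilbDelContraction}) is a \emph{consequence} of Proposition~\ref{Prop:ExactSequence}, not an independent input. Your alternative ``lifting argument \ldots\ correct by an element of $p_x\sym(U)$'' is not a proof either: the naive lift of $g\in\ker\Ical(X/x)\subseteq\sym(W)$ to $\sym(U)$ is in general \emph{not} in $\ker\Ical(X)$ (Example~\ref{Example:InternalRecursion} shows $x^2\in\Pcal(X_4/x_7)$ but $x^2\notin\Pcal(X_4)$; the actual preimage is $x^2+xz$), and you give no mechanism for producing the correction. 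What the paper does instead is use the inductive hypothesis $\ker\Ical(X/x,k,J/x)=\Pcal(X/x,k,J/x)$ to reduce surjectivity to lifting generators $fp_{\bar Y}\in S(X/x,k,J/x)$; one then checks combinatorially (via Lemma~\ref{Lemma:CharFuncDelCon}) that $fp_Y\in S(X,k,J)\subseteq\Pcal(X,k,J)\subseteq\ker\Ical(X,k,J)$, the last inclusion being Lemma~\ref{Prop:MainTheoremEasyInclusion}. For $k=-1$ with $J\supseteq\Hcal$ the same lifting works after a more careful case analysis on $\codim C$; only the case $(k,J)=(-1,\{X\})$ genuinely requires a separate dimension count, and there one uses the known cardinality of the internal bases $\BB_-(X)$, not the (as yet unproved) Hilbert series of $\ker\Ical(X)$.
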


The proof of this proposition is inductive. It uses the following lemma:

\begin{Lemma}
\label{Lemma:HilfslemmaHauptsatz}
Suppose that we are in the same setting as in Proposition~\ref{Prop:ExactSequence}.
Let $x\in X$ be neither a loop nor a coloop. 
Suppose that $\Pcal(X\setminus x,k,J\setminus x) = \ker\Ical(X\setminus x,k,J\setminus x)$ and
 $\Pcal(X / x,k,J / x) = \ker\Ical(X / x,k,J / x)$. 
\begin{asparaenum}[(1)]
 \item Then, the following sequence is exact:
 \label{enum:ExactSequencePropPartA}
 \begin{align}
   0 \to \ker \Ical(X\setminus x, k, J \setminus x) (-1) \stackrel{\cdot p_x}{\longrightarrow} \ker \Ical(X,k,J) 
 \stackrel{ \sym(\pi_x) }{\longrightarrow} \ker\Ical( X / x,k, J/x) \to 0 
 \end{align}
 \item If we suppose in addition that 
 \label{enum:ExactSequencePropPartB}
 $\Ical'(X\setminus x,k,J\setminus x,E')=\Ical(X\setminus x,k,J\setminus x)$ and 
 $ \Ical'(X / x,k,J / x,E'') = \Ical(X / x,k,J / x) $, 
%
 the following sequence is exact:
\begin{align}
 0 \stackrel{ \hspace*{7ex} }{\longrightarrow} &\ker \Ical'(X\setminus x,k,J\setminus x,E') (-1) \stackrel{\cdot p_x}{\longrightarrow} \ker \Ical'(X,k,J,E) \nonumber\\ 
 \stackrel{ \sym(\pi_x) }{\longrightarrow} &\ker\Ical'( X / x, k, J/x, E'') \longrightarrow 0 
 \label{eq:exactSequenceLemma} 
\end{align}
 Here, 
 $E'$ and $E''$ denote the restrictions of 
 $E$ to $\Lcal(X\setminus x)$ and $\Lcal(X/x)$, \ie
 $E'(C)= E(\clos(C))$ and $E''(\bar C) =  E(C\cup x)$.
\end{asparaenum}
\end{Lemma}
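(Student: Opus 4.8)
The plan is to prove the two exact-sequence statements by a direct dimension-count together with explicit identification of kernels and images. The key observation is the decomposition $\sym(U) \cong \sym(W) \oplus p_x \sym(U)$ from the deletion-contraction setup, where $W = \spa\{b_1,\dots,b_{r-1}\}$ and $b_r = x$, so that $\sym(\pi_x)$ acts as the identity on $\sym(W)$ and kills $p_x$. First I would treat part~\eqref{enum:ExactSequencePropPartA}. Exactness on the left (injectivity of $\cdot p_x$) is immediate since $\sym(U)$ is an integral domain. Exactness in the middle amounts to showing that $\ker\bigl(\sym(\pi_x)\restriction_{\ker\Ical(X,k,J)}\bigr) = p_x \cdot \ker\Ical(X\setminus x, k, J\setminus x)$; the inclusion of the right-hand side into the left I would verify by a Leibniz-rule computation (in the spirit of Lemma~\ref{Lemma:derivativesfaces}) showing that if $f \in \ker\Ical(X\setminus x,k,J\setminus x)$ then $p_x f$ is annihilated by every generator $D_\eta^{m_X(\eta)+k+\chi_J(\eta)}$ of $\Ical(X,k,J)$, splitting on whether $\eta$ vanishes on $x$ or not and invoking Lemma~\ref{Lemma:CharFuncDelCon} to match up the exponents $m_{X\setminus x}$, $\chi_{J\setminus x}$ with $m_X$, $\chi_J$; the reverse inclusion follows because any element of $\ker\Ical(X,k,J)$ in $p_x\sym(U)$ is $p_x g$ with $g$ forced to lie in the deletion kernel by a similar computation. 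Exactness on the right (surjectivity of $\sym(\pi_x)$ onto $\ker\Ical(X/x,k,J/x)$) I would reduce to a dimension count: by the hypotheses $\Pcal = \ker\Ical$ for the deletion and contraction, and by Lemma~\ref{Prop:MainTheoremEasyInclusion} together with the known equality of $\hilb(\ker\Ical)$ and $\hilb(\sym(V)/\Ical)$, it suffices to show $\dim\ker\Ical(X,k,J) \ge \dim\ker\Ical(X\setminus x,\cdots) + \dim\ker\Ical(X/x,\cdots)$; but the left two terms of the sequence are already exact, so the quotient $\ker\Ical(X,k,J)/p_x\ker\Ical(X\setminus x,\cdots)$ injects into $\ker\Ical(X/x,\cdots)$ via $\sym(\pi_x)$, and I would exhibit enough explicit preimages — lifting each generating element $\bar f p_{\bar Y}$ of $S(X/x,k,J/x)$ (or of the internal generating set for $k=-1$) to $f p_Y \in S(X,k,J)$ using that $\chi_{J/x}(\bar Y) = \chi_J(Y\cup x)$ — to force equality.

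Then I would handle part~\eqref{enum:ExactSequencePropPartB} for $k\in\{-1,0\}$. Here the extra hypothesis is that $\Ical' = \Ical$ for both the deletion and the contraction, and the goal is the analogous sequence with $\Ical'$ in place of $\Ical$. The cleanest route is to show that under these hypotheses $\Ical'(X,k,J,E) = \Ical(X,k,J)$ as well, after which \eqref{eq:exactSequenceLemma} is literally the sequence from part~\eqref{enum:ExactSequencePropPartA}. To prove $\Ical' = \Ical$ at level $X$, I would take an arbitrary generator $D_{E(C)}^{m_X(C)+k+\chi_J(C)}$ of $\Ical(X,k,J)$ associated to an arbitrary flat $C$, and express it in terms of generators associated to hyperplanes and maximal missing flats. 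The idea is to split on whether $x\in C$: if $x \notin C$, then $C$ is a flat of $X\setminus x$ (since $k\le 0$ forces the relevant rank equality, cf. Remark~\ref{Rem:InternalIdealDelCon}) and I would use the deletion hypothesis $\Ical'(X\setminus x,k,J\setminus x,E') = \Ical(X\setminus x,k,J\setminus x)$ to write the corresponding deletion-generator in terms of hyperplane/maximal-missing-flat generators of $X\setminus x$, then transport back to $X$ by multiplying by a suitable power of $p_u$ as a differential operator (the trick used in the proof of Lemma~\ref{Lemma:IgleichIprime}); if $x\in C$, then $\bar C = \overline{C\setminus x}$ is a flat of $X/x$ and I would argue symmetrically using the contraction hypothesis, observing that a defining normal $\eta\in W^* \cong x^\perp$ for $\bar C$ is also a defining normal for $C$, and that hyperplanes / maximal missing flats of $X/x$ correspond (by Lemma~\ref{Lemma:MaxMissingFlatDelCon}) to such objects in $X$. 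The independence of $\Ical'$ from the choice of normal selector function for $k\in\{-1,0\}$ is already available once $\Ical' = \Ical$, since $\Ical$ does not reference $E$.

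The main obstacle I expect is the middle-exactness bookkeeping in part~\eqref{enum:ExactSequencePropPartA}: carefully tracking, for a generator $D_\eta^{m_X(\eta)+k+\chi_J(\eta)}$ and an element $p_x f$ with $f$ in the deletion kernel, how the exponent changes between $X$ and $X\setminus x$ depending on whether $\eta(x)=0$. When $\eta(x)=0$ the flat $C$ of $\eta$ contains $x$, $m_X(C) = m_{X\setminus x}(C\setminus x)$, and $\chi_J(C) = \chi_{J\setminus x}(C\setminus x)$, so the generator is the same and $D_\eta(p_x f) = p_x D_\eta f = 0$ trivially; when $\eta(x)\ne 0$ one has $m_X(C) = m_{X\setminus x}(C) + 1$ and a Leibniz expansion $D_\eta^{e}(p_x f) = \sum_i \binom{e}{i} D_\eta^i p_x \cdot D_\eta^{e-i} f$ collapses because $D_\eta^i p_x = 0$ for $i\ge 2$, leaving $p_x D_\eta^e f + e (D_\eta p_x) D_\eta^{e-1} f$, and one checks both terms vanish using that $D_\eta^{e-1}f$ and $D_\eta^e f$ are handled by the deletion-ideal generator of the appropriate exponent. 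The rank/flat correspondences are routine but need the non-coloop hypothesis on $x$ (to keep $m_X(\eta)$ from degenerating) and, for $k=-1$, the hypothesis $J\supseteq\Hcal$ or $J=\{X\}$ (which via Remark~\ref{Rem:InternalIdealDelCon} guarantees the deletion and contraction upper sets stay in the allowed class so that the inductive hypothesis applies). Everything else is either a standard domain/integrality argument, a dimension count reducing to Corollary~\ref{Thm:IndependenceRepresentation}-type Hilbert-series facts, or the $p_u$-differentiation trick already exhibited in Lemma~\ref{Lemma:IgleichIprime}.
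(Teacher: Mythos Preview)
Your plan for part~\eqref{enum:ExactSequencePropPartA} is largely sound and close in spirit to the paper, but you never check that $\sym(\pi_x)$ actually lands in $\ker\Ical(X/x,k,J/x)$: you assert that ``the quotient injects into $\ker\Ical(X/x,\cdots)$'' without justification. This is the content of step~(ii) in the paper's proof, and it is not automatic---one must show that if $D_\eta^{e_X(\eta)}g=0$ for all $\eta$, and $g=h+p_xg_1$ with $h\in\sym(W)$, then $D_\eta^{e_{X/x}(\eta)}h=0$ for all $\eta\in x^\perp$. The paper does this via the directional degree function $\rho_f$ (which packages exactly the ``how many differentiations survive'' bookkeeping you are doing by hand with Leibniz); you could equally well do it by your Leibniz/decomposition method, but you must do it. Without it your surjectivity argument is incomplete: lifting $S(X/x,k,J/x)$ to $S(X,k,J)$ shows the image \emph{contains} $\ker\Ical(X/x,\cdots)$, but to get exactness you also need containment the other way.

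Your plan for part~\eqref{enum:ExactSequencePropPartB} takes a genuinely different route---proving $\Ical'(X,k,J,E)=\Ical(X,k,J)$ at level $X$ first and then quoting part~\eqref{enum:ExactSequencePropPartA}---and this route has a real gap in the case $x\notin C$. The ``$p_u$-trick'' of Lemma~\ref{Lemma:IgleichIprime} \emph{lowers} exponents (it differentiates a relation in $\sym(V)$), whereas passing from $X\setminus x$ to $X$ \emph{raises} the exponent attached to $C$ by one (since $m_X(C)=m_{X\setminus x}(C)+1$). Concretely, from the deletion hypothesis you get $D_\eta^{e_{X\setminus x}(C)}=\sum_i q_i\,D_{E'(C_i)}^{e_{X\setminus x}(C_i)}$; multiplying by $D_\eta$ yields terms $q_iD_\eta D_{E'(C_i)}^{e_{X\setminus x}(C_i)}$, and when $x\notin\clos_X(C_i)$ this is $q_iD_\eta D_{E(C_i)}^{e_X(C_i)-1}$, which is not in general a multiple of the $\Ical'(X,k,J,E)$-generator $D_{E(C_i)}^{e_X(C_i)}$ (since $\eta$ and $E(C_i)$ need not be proportional). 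Your contraction case $x\in C$ does work cleanly (exponents match exactly and Lemma~\ref{Lemma:MaxMissingFlatDelCon} transports the flats), but the deletion case does not close. The paper avoids this entirely by proving the $\Ical'$-sequence directly with the directional degree function---steps (i)--(v) work verbatim for $\Ical'$ because one only ever tests against generators coming from hyperplanes and maximal missing flats, and Lemma~\ref{Lemma:MaxMissingFlatDelCon} guarantees these transport correctly under both deletion and contraction.
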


\begin{proof}[Proof of Lemma~\ref{Lemma:HilfslemmaHauptsatz}]
We only prove part \eqref{enum:ExactSequencePropPartB}. The reader will notice, that the same proof 
with some obvious modifications can be used to prove 
part \eqref{enum:ExactSequencePropPartA}, unless 
$k=-1$ and $J=\{X\}$. 
In that case, both parts are equivalent 
 by 
Lemma~\ref{Lemma:IgleichIprime}.

Before starting with the proof, we introduce some additional  notation, which is only used here.
For a flat $C$ defined by $\eta$, we write $e_X(\eta)=e_X(C) := m_X(C) + k + \chi(C)$.
As described above, we fix a subspace $W\subseteq U$ complementary to $\spa x$ and identify $U/x$ with $W$.
 Hence, $\ker \Ical'(X/x,k,J/x,E'')\subseteq \sym(W)$.

Let $f\in \sym(U)$ and $v,\eta \in V$. 
Let $t$ be a formal symbol. Then $f(v +t\eta)\in \K[t]$ and the following Taylor expansion formula holds: $f(v +t\eta)=\sum_{k\ge 0} \frac{D_\eta^k}{k!} f(v)t^k$. Now define $\rho_f : V\to \N$, the \emph{directional degree function} of $f$ \cite{ardila-postnikov-2009}, as the function 
 which assigns to $\eta$ the degree of the univariate polynomial $f(v +t\eta) \in \K[t]$ for generic $v$. %
We obtain
$\rho_{fg}=\rho_f + \rho_g$ by comparing %
the Taylor expansion of $f\cdot g$ with %
the product of the Taylor expansions of $f$ and $g$. 
$\rho_f(\eta)$ tells us how many derivations $f$ survives in direction $\eta$. Hence, $\rho$ can be used to describe $\ker \Ical(X,k,J)$ and $\ker \Ical'(X,k,J,E)$. Namely,
\begin{align}
 \ker \Ical(X,k,J)&=\{ f\in \sym(U) : \rho_f(\eta) < e(\eta),\, \eta\in V\setminus \{0\} 
 \}%
\end{align}

\smallskip

\noindent Now we come to the main part of the proof.  It is split into five parts:

\newcommand{\paraenumIndent}{3mm}
\begin{asparaenum}[\itshape (i)]
\item \emph{$\cdot p_x$  is well-defined, \ie really maps to $\ker \Ical'(X,k,J,E)$:} 
due to Lemma~\ref{Prop:MainTheoremEasyInclusion}, it suffices to prove
$ S(X \setminus x,k,J\setminus x) \stackrel{\cdot p_x}{\hookrightarrow} S(X, k, J)$.
 For $k\ge 0$, this follows directly from Lemma~\ref{Lemma:CharFuncDelCon}.
 For $k = -1$, consider 
 $p_Y\in S(X\setminus x, -1, J\setminus x)$ and
$C\in \Lcal(X)$. Then $C\setminus x \in \Lcal(X\setminus x)$ and 
 $\chi_{J\setminus x}(C\setminus x) \le \chi_J(C)$. 
 One can easily deduce 
 $\card{(Y \cup  x )\setminus C} < m(C) - 1 + \chi_J(C)$ 
from the corresponding inequality for $C\setminus x$. 
\label{enum:KeyLemmaA}
\item 
\emph{$\sym(\pi_x)$ is well-defined:} 
let $g \in \ker \Ical'(X,k,J,E)$ and let $h:=(\sym(\pi_x))(g)$. 
Let $\bar C \in \Lcal(X/x)$ be a maximal missing flat or a hyperplane, respectively.
By Lemma~\ref{Lemma:MaxMissingFlatDelCon}, $C\cup x \in \Lcal(X)$ is a maximal missing flat or a hyperplane, respectively. 
 Let $\eta:=E(C \cup x)$. 
This implies 
$E''(\bar C)=\eta$.  We need to prove $\rho_h(\eta) < {e_{X/x}(\bar C)}$.

Note that $m_{X/x}(\bar C)=m_X(C\cup x)$ and by Lemma~\ref{Lemma:CharFuncDelCon}, $\chi_{J/x}(\bar C) = \chi_J(C\cup x)$ 
Hence, $e_{X/x}(\bar C)=e_X(C \cup x)$.
$g$ can be uniquely written as $ g=h + p_x g_1 $ for some $g_1\in \sym(U)$.
For all $k\in \N$, $D_\eta^k g = D_\eta^kh + p_x D_\eta^k g_1$.
As $p_x$ does not divide $h$, this implies $\rho_h(\eta)\le \rho_g(\eta)$. In summary, we get
\begin{align}
 e_{X/x}(\bar C)=e_X(C \cup x) > \rho_g(\eta) \ge \rho_h(\eta)
\end{align}
\item
\emph{Injectivity of $\cdot p_x$:} clear.
\item
\emph{Exactness in the middle:}
let $g\in \ker\Ical'(X,k,J,E)$ and $\sym(\pi_x)(g)=0$. 
This implies that $g$ can be written as $g=p_xh$ for some $h\in \sym(U)$.
We need to show that %
$h\in \ker\Ical'(X\setminus x,k,J\setminus x,E')=\ker\Ical(X\setminus x,k,J\setminus x)$.

Let $C$ be a maximal missing flat (resp.\ hyperplane) in $X\setminus x$. 
By Lemma~\ref{Lemma:MaxMissingFlatDelCon},
$C'=C$ or $C'=C\cup x$ is a maximal missing flat 
 (resp.\ hyperplane) in $X$. 
Let $\eta:=E(C')$. $\eta$ is also a defining normal for $C\subseteq X\setminus x$. 
By definition of $E'$, 
$\eta=E'(C)$.
We show now that $\rho_h(\eta) < e_{X\setminus x}(C)$. 

If $x\in C'$, then $\rho_{p_x}=0$, $m_{X\setminus x}(C)=m_X(C')$, and $\chi_{J\setminus x}(C) =  
\chi_J(C')$.
If $x\not \in C'$, then $\rho_{p_x}(\eta)=1$, $m_{X\setminus x}(C) + 1 =m_X(C')$, and $\chi_{J\setminus x}(C)=\chi_J(C')$. 
So, in both cases, $e_{X\setminus x}(\eta) + \rho_{p_x}(\eta) = e_X(\eta)$.
This implies
\begin{align}
 e_{X\setminus x}(\eta) =  e_X(\eta) - \rho_{p_x}(\eta) >  \rho_{p_xh}(\eta) - \rho_{p_x}(\eta) = \rho_{h}(\eta)
\end{align}
\item
\emph{Surjectivity of $\sym(\pi_x)$:} 
we consider the case $k\ge 0$ first.
Let $fp_{\bar Y} \in S(X / x,k,J / x)$.
It suffices to prove that
$fp_Y \in S(X,k,J)$. 
Since $x \not\in Y$, by Lemma~\ref{Lemma:CharFuncDelCon}, $\chi_{J/x}((X/x) \setminus \bar Y )
 = \chi_J(X \setminus Y)$. This implies $fp_Y \in S(X,k,J)$.
\smallskip

Now consider the case $k=-1$. This requires a little more work. There are two subcases:

{ 
 \item[(a)] \emph{$J \supseteq \Hcal$:} 
let $p_{\bar Y} \in S(X/x, -1, J/x)$. We show now that $p_Y \in S(X,-1,J)$. 
Let $C \in \Lcal(X) \setminus \{X\}$.
Suppose first that $x\in C$ or $\codim C \ge 2$. Then $D:=\clos(C\cup x) \neq X$.
By assumption, $\card{\bar Y\setminus (\overline{D\setminus x})} < m_{X/x}(\overline{D\setminus x}) -1 + \chi_{J\setminus x}(\overline{D\setminus x})$. 
By Lemma~\ref{Lemma:CharFuncDelCon}, this implies
\begin{align}
\card{Y\setminus D} &<  m_X(D) -1 + \chi_J(D)
\label{eq:InternalSurjectivityProof} 
\end{align}
Since $x\in D\setminus C$ and $x\not\in Y$, 
$\card{Y\setminus C}- \card{Y\setminus D} \le m_X(C) - m_X(D) - 1$.
Adding this inequality to \eqref{eq:InternalSurjectivityProof}, we obtain the desired inequality: 
\begin{align}
\card{Y\setminus C} &<  m_X(C) -1 + \chi_J(C) \label{equation:internalDefiningRelation}
\end{align}

  Now suppose that $C$ is a hyperplane and $x\not\in C$. By assumption, $\chi_J(C)=1$.
\indent Since  $x\not\in Y\cup C$, we can deduce that \eqref{equation:internalDefiningRelation} holds for this $C$.
}

 \item[(b)] \emph{$J= \{X\}$:}
This can be shown by a dimension argument using the fact that the dimension of
$\Pcal(X-1,\{X\})$ equals the cardinality of the set of so called internal bases $\BB_-(X)$
  \cite[Theorem 5.9]{holtz-ron-2011}.
If $x\in X$ is the minimal element, the following deletion-contraction equality holds: 
\begin{align}
\abs{\BB_-(X)} = \abs{\BB_-(X \setminus x)} + \abs{\BB_-(X/x)}
\end{align}
\end{asparaenum}
\end{proof}

\begin{proof}[Proof of Proposition~\ref{Prop:ExactSequence} and of the Main Theorem]
We generalize the proof of \cite[Propositions 4.4 and 4.5]{ardila-postnikov-2009}.

Loops can safely be ignored: they are contained in every flat $C$, thus 
$m_X(C)=m_{X\setminus x}(C)$ and $\Lcal(X)\cong \Lcal(X\setminus x)$ if $x$ is a loop. 
From now on, we suppose that $X$ does not contain loops.

We prove both statements by induction on the number of elements of 
$X$ that are not coloops.
The reader should check that our reasoning below also works for $k=-1$, although in that case, $\Pcal$-spaces might be zero.
Remark~\ref{Rem:InternalIdealDelCon} ensures that an upper set that contains 
all hyperplanes
 preserves this structure under deletion and contraction.

If $X$ contains only coloops the Main Theorem follows from Proposition~\ref{Prop:PspaceGeneralPosition}.
Now suppose that $x\in X$ is not a coloop and that the Main Theorem holds for $X/x$ and $X\setminus x$.
In addition, we assume $\dim U \ge 2$. If $\dim U= 1$, the statement follows from Remark~\ref{Prop:PspaceOneDim}.

By Lemma~\ref{Lemma:HilfslemmaHauptsatz}, the following
sequence is exact:
 \begin{align*}
   0 \to \ker \Ical(X\setminus x, k, J \setminus x) (-1) \stackrel{\cdot p_x}{\longrightarrow} \ker \Ical(X,k,J) 
 \stackrel{ \sym(\pi_x) }{\longrightarrow} \ker\Ical( X / x,k, J/x) \to 0 
 \end{align*}
Every short exact sequence of vector spaces splits. Hence, 
$\ker \Ical(X,k,J) = p_x \cdot \ker \Ical(X\setminus x, k, J \setminus x) \oplus 
 \ker\Ical( X / x,k, J/x)$. For $k\in \{-1,0\}$, the same argumention also works for $\Ical'(X,k,J,E)$.
To conclude, we recall the following two statements that were shown in the proof of 
Lemma~\ref{Lemma:HilfslemmaHauptsatz}:
 (i)~$p_x \cdot S(X\setminus x, k, J\setminus x) \subseteq S(X, k, J)$ 
 and (ii)~$\sym(\pi_x) : S(X, k, J) \to S(X/x, k, J/x)$ is surjective, if $(k,J)\neq (-1,\{X\})$.
\end{proof}

\begin{Remark}
\label{Remark:InternalMainTheoremProblems}
The map  $\sym(\pi_x) : S(X, -1, \{X\}) \to S(X/x, -1, \{X/x\})$ is in general not surjective
(cf. Example~\ref{Example:InternalRecursion}).
Proposition \ref{Prop:ExactSequence} is false for arbitrary $J\not \supseteq \Hcal$ 
(cf. Example~\ref{example:InternalAdditionalCondition}).
The difficulty of the case $k=-1$ was already observed by Holtz and Ron. They conjectured that the Main Theorem
holds in the internal case, \ie for $k=-1$ and $J=\{X\}$ 
 \cite[Conjecture 6.1]{holtz-ron-2011}.
An  incorrect ``proof'' of this conjecture appeared in \cite{ardila-postnikov-2009}, which assumed that
 $\sym(\pi_x) : S(X, -1, \{X\}) \to S(X/x, -1, \{X/x\})$ is always surjective.
 The same authors later found a counterexample for the conjecture \cite{ardila-errata-2012}.
\label{Remark:MainTheoremNotInternal}
\end{Remark}

\section{Bases for $\Pcal$-spaces}
\label{section:basis}
In this section, we show how a basis for $\Pcal(X,k,J)$ can be selected from $S(X,k,J)$ for $k\ge 0$. 
Our construction depends on the order on $X$. This order is used to define the notions of internal and external activity
(see \cite[Section 6.6.]{brylawski-oxley-1992} for a reference). 
Our result is a generalization of \cite[Proposition 4.21]{ardila-postnikov-2009} to hierarchical spaces.
At the end of this section, there is a remark on the case $k=-1$.
\smallskip

Recall that  $\BB(X)$ denotes the set of all bases $B\subseteq X$. Fix a basis $B\in \BB(X)$. 
$b\in B$ is called \emph{internally active} if $b=\max (X\setminus \clos(B\setminus b))$,
\ie $b$ is the maximal element of the unique cocircuit %
contained in $(X\setminus B)\cup b$.
The set of internally active  elements in $B$ is denoted  $I(B)$. 
$x\in X\setminus B$ is called \emph{externally active} if 
$x\in \clos \{ b \in B : b \le x \}$,
\ie $x$ is the maximal element of the unique circuit contained in $B\cup x$.
The set of externally active elements with respect to $B$ is denoted $E(B)$.

\begin{Definition}
 \sameterminologyasinmaindefinition
 In addition, let $k\ge 0$. Then define
\begin{align}
  \Gamma(X,k,J) &:= \bigl\{ (B,I,\vek a_I) :  B \in \BB(X),\, I\subseteq I(B),\,  \vek a_I \in \N^I, \nonumber\\
                    &\qquad \qquad\qquad\qquad \sum_{x\in I} a_x \le k +  \chi((B\cup 
                     E(B))\setminus I) -1 \bigr\} 
	\label{eq:DefinitionGamma}
	 \\
  \Bcal(X,k,J) &:= \Bigl\{ p_{X\setminus (B \cup E(B))} \prod\limits_{x\in I} p_x^{a_x+1} : (B,I,\vek a_I) \in \Gamma(X,k,J) \Bigr \} \subseteq \sym(U)   
  \label{eq:DefinitionPspaceBasis} 
\intertext{For $k=0$, this specializes to}
 \Bcal(X,0,J) &= \left\{ p_{(X\setminus (B \cup E(B)))\cup I}  : B \in \BB(X),\, I\subseteq I(B),\,\clos((B\cup E(B))\setminus I) \in J \right\} \nonumber
 \end{align}
\label{Definition:BasesForPspaces}
\end{Definition}

Note that \textit{a priori}, it is unclear whether the set $\Gamma(X,k,J)$ has the same cardinality as the set $\Bcal(X,k,J)$ since we do not know if  distinct elements of $\Gamma(X,k,J)$ correspond to distinct polynomials in $\Bcal(X,k,J)$.
This desired property only becomes clear in the proof of the following theorem:

\begin{Theorem}[Basis Theorem]
 \sameterminologyasinmaindefinition
 In addition, let $k\ge 0$.
Then $\Bcal(X,k,J)$ is a basis for $\Pcal(X,k,J)$.
\label{Theorem:HierarchicalBasis}
\end{Theorem}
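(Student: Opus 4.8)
The natural strategy is a deletion--contraction induction that mirrors the proof of the Main Theorem, combined with the classical decomposition of $\BB(X)$ according to internal and external activity of a distinguished element. First I would establish the cardinality count: using Corollary~\ref{Thm:IndependenceRepresentation} (the Hilbert series of $\Pcal(X,k,J)$ depends only on the matroid) it suffices to show that $\abs{\Bcal(X,k,J)} = \dim \Pcal(X,k,J)$ in each degree \emph{and} that $\Bcal(X,k,J)$ spans. I would first prove the degree-wise count $\sum_{(B,I,\vek a_I)\in\Gamma(X,k,J)} \bigl(\text{contribution}\bigr) = \hilb(\Pcal(X,k,J),t)$ by induction on $\abs{X}$, splitting $\BB(X)$ for a fixed non-coloop, non-loop element $x$ into bases containing $x$ (these match $\BB(X/x)$, with activities computed in $X/x$) and bases not containing $x$ (these match $\BB(X\setminus x)$). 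One checks that the activity sets behave correctly under this split --- $x\notin B\cup E(B)$ contributes the factor $p_x$ on the $X\setminus x$ side, and $x\in B\cup E(B)$ lands $x$ in the externally active or internally active part --- so that $\Gamma(X,k,J)$ decomposes compatibly with the exact sequence in Proposition~\ref{Prop:ExactSequence}, using Lemma~\ref{Lemma:CharFuncDelCon} to track $\chi$.

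\textbf{Spanning.} Since $\Bcal(X,k,J)\subseteq S(X,k,J)$ by construction (each $p_{X\setminus(B\cup E(B))}\prod_x p_x^{a_x+1} = fp_Y$ with $Y = X\setminus(B\cup E(B))$ and $\deg f = \sum a_x \le k+\chi(X\setminus Y)-1$, noting $\clos(B\cup E(B)) = X$ would force $X\setminus Y$ to be a flat --- here one must be a little careful and use $\chi(X\setminus Y) = \chi(\clos(B\cup E(B))\setminus I)$-type identities), and since $\Pcal(X,k,J) = \spa S(X,k,J)$, the inclusion $\spa\Bcal(X,k,J)\subseteq\Pcal(X,k,J)$ is immediate. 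For the reverse inclusion I would again induct: the exact sequence $0\to\ker\Ical(X\setminus x,k,J\setminus x)(-1)\xrightarrow{\cdot p_x}\ker\Ical(X,k,J)\xrightarrow{\sym(\pi_x)}\ker\Ical(X/x,k,J/x)\to0$ splits, and by induction $\Bcal(X\setminus x,k,J\setminus x)$ and $\Bcal(X/x,k,J/x)$ span the outer terms; I would verify that $p_x\cdot\Bcal(X\setminus x,k,J\setminus x)$ together with lifts of $\Bcal(X/x,k,J/x)$ (obtained by replacing each $p_{\bar v}$ by $p_v$) equals $\Bcal(X,k,J)$, exactly via the activity-set bookkeeping above. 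Then spanning of the two pieces plus the splitting gives spanning of $\Pcal(X,k,J)$, and together with the cardinality count, linear independence follows --- which incidentally resolves the \textit{a priori} ambiguity noted before the theorem that distinct elements of $\Gamma$ give distinct polynomials.

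\textbf{Base case and the obstacle.} The base case is $X$ a basis of $U$ (all coloops), handled by Proposition~\ref{Prop:PspaceGeneralPosition}: there $\BB(X)=\{X\}$, every element is internally active, $E(X)=\emptyset$, and $\Gamma$ reduces to tuples $(X,I,\vek a_I)$ with $\sum_{i\in I}a_i\le k+\chi(X\setminus\{x_i:i\in I\})-1$, matching \eqref{eq:PspaceGeneralPosition} verbatim. I expect the main obstacle to be the \emph{activity bookkeeping under deletion--contraction}: showing that for a non-coloop $x$, internal/external activity of the other elements is preserved in the appropriate minor, and that the set $(B\cup E(B))\setminus I$ governing the $\chi$-constraint transforms correctly (one needs that $\clos_X$ of this set, intersected with the flat lattice, maps under $L_x$ and $L^x$ to the corresponding sets for $X\setminus x$ and $X/x$). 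This is where one must invoke the standard results on the deletion--contraction behavior of Tutte activities (\cite{brylawski-oxley-1992}) and combine them with Lemma~\ref{Lemma:CharFuncDelCon} and Lemma~\ref{Lemma:MaxMissingFlatDelCon}; the special position of $x$ relative to $\clos(B\setminus b)$ for $b\in B$ is the delicate point. The case distinction $x\in B$ versus $x\notin B$, and within the latter $x\in E(B)$ versus $x\notin E(B)$, is routine once the correspondence is set up, but getting the correspondence right is the crux.
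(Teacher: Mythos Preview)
Your overall deletion--contraction strategy matches the paper's, including the base case via Proposition~\ref{Prop:PspaceGeneralPosition} and the cardinality count $\abs{\Gamma(X,k,J)} = \dim\Pcal(X,k,J)$ via Proposition~\ref{Prop:ExactSequence}. Two points deserve comment.

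First, the ``obstacle'' you flag---activity bookkeeping under deletion and contraction---dissolves once you take $x$ to be the \emph{minimal} element of $X$, which the paper does explicitly. Then $x$ is externally active with respect to a basis $B\not\ni x$ iff $x$ is a loop, and internally active in $B\ni x$ iff $x$ is a coloop; since $x$ is neither, $x\notin E(B)\cup I(B)$ in every case, and the activities of all remaining elements agree in $X$ and in the relevant minor. This makes the decomposition $\Gamma(X,k,J) = \Gamma(X\setminus x,k,J\setminus x)\sqcup\iota_1(\Gamma(X/x,k,J/x))$ immediate, with Lemma~\ref{Lemma:CharFuncDelCon} handling $\chi$. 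You may reorder freely because $\dim\Pcal(X,k,J)$ is order-independent by the Main Theorem. Your planned case distinction ``$x\in E(B)$ versus $x\notin E(B)$'' is therefore unnecessary; for a non-minimal $x$ the activities genuinely can shift, so without this trick your bookkeeping would be substantially harder.

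Second, the paper establishes linear independence differently and more cheaply than your spanning-by-induction plan: it simply observes that $\Bcal(X,k,J)\subseteq\Bcal(X,k,\Lcal(X))=\Bcal(X,k+1,\{X\})$, and the latter is already known to be linearly independent by \cite[Proposition~4.21]{ardila-postnikov-2009}. Together with $\abs{\Gamma(X,k,J)}=\dim\Pcal(X,k,J)$ and $\Bcal(X,k,J)\subseteq\Pcal(X,k,J)$, this finishes the proof. Your spanning argument is also correct and arguably more self-contained (it does not invoke the non-hierarchical case as a black box), but it requires tracking the actual polynomials through the exact sequence rather than just cardinalities.

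One small slip: in your containment step you want $Y=(X\setminus(B\cup E(B)))\cup I$ and $f=\prod_{x\in I}p_x^{a_x}$, so that $X\setminus Y=(B\cup E(B))\setminus I$ and $\deg f=\sum a_x$ matches the $\Gamma$-constraint exactly; with your stated $Y=X\setminus(B\cup E(B))$ the degree bound does not line up.
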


\begin{proof}%
As in the proof of the Main Theorem, we may suppose that  $X$ does not contain any loops:
if $x$ is a loop, it is not contained in any basis, but $x$ is contained in every flat and always externally active. Hence, the removal of a loop changes neither $\Pcal(X,k,J)$ nor $\Gamma(X,k,J)$. 

\noindent
The remainder of this proof is split into four parts:
\begin{asparaenum}[\itshape (i)]
 \item Let $x\in X$ be the minimal element.
Let $B,B'\in \BB(X)$ with $x\not\in B$ and $x\in B'$.
 $x$ is externally active with respect to $B$ if and only if $x$ is a loop and $x$ is internally active in $B'$ if and only if
 $x$ is a coloop. \label{enum:HierarchicalBasisProofObservation}
 \item \emph{$\card {\Gamma(X,k,J)} = \dim \Pcal(X,k,J)$:}  we prove this by induction over the number of elements that are not coloops.
Suppose that $X$ contains only coloops. In this case there is only one basis and all its elements are internally active.
The spanning set given in \eqref{eq:PspaceGeneralPosition} is a basis and it coincides with $\Bcal(X,k,J)$.
\label{enum:CorrectNumberOfBases}

Now suppose that there is at least one element in $x$ which is not a coloop.
In addition, we may assume $\dim U\ge 1$. If $\dim U=1$, the statement follows from Remark~\ref{Prop:PspaceOneDim}.
As $\dim \Pcal(X,k,J)$ and by induction also $\card{\Gamma(X/x,k,J/x)}$ and $\card{\Gamma(X\setminus x,k,J \setminus x)}$ are independent of the order on $X$, we may assume that $x$ is the minimal element.

$\BB(X)$ can be partitioned as $\BB(X)=\BB(X\setminus x) \DisjUnion \iota(\BB(X/x))$, where $\iota$ denotes the map that sends a basis $\bar B\in \BB(X/x)$ to $B\cup x$.
It follows from {(\ref{enum:HierarchicalBasisProofObservation})} and Lemma~\ref{Lemma:CharFuncDelCon} that $\Gamma(X,k,J)$ can also be written as a disjoint union of two sets:  $\Gamma(X,k,J)=
\Gamma(X\setminus x,k, J\setminus x) \DisjUnion \iota_1(\Gamma(X / x,k, J/x))$,
 where $\iota_1$ denotes the map that sends $(\bar B,\bar I,\vek a_{\bar I})$ to $(B\cup x,I, \vek a_I)$.
Comparing this with Proposition~\ref{Prop:ExactSequence}, we see that $\card{\Gamma(X,k,J)}=%
\dim \Pcal(X,k,J)$.
\item \emph{$\Bcal(X,k,J)\subseteq S(X,k,J) \subseteq\Pcal(X,k,J)$:} if $Y=(X\setminus (B\cup E(B)))\cup I$, then $X\setminus Y=(B\cup E(B)) \setminus I$. Hence, by comparison of \eqref{eq:DefinitionGamma} and \eqref{eq:DefinitionPgenerators}, 
the statement follows.
\item 
\emph{$\Bcal(X,k,J)$  is linearly independent:} 
 by \cite[Proposition 4.21]{ardila-postnikov-2009}, $\Bcal(X,k,\Lcal(X))=\Bcal(X,k+1,\{X\})$ is linearly independent.
As $\Bcal(X,k,J)$ is contained in this set, it is also linearly independent.
\end{asparaenum}
\end{proof}

\begin{Remark}
\label{Remark:InternalBases}
We do not know if there is a simple method to construct bases for $\Pcal(X,-1,J)$.
This difficulty was already observed for the internal case by Holtz and Ron \cite{holtz-ron-2011}.
In Section~\ref{subsect:semiIntHilb}, we define a set of semi-internal bases $\BB_-(X,J)\subseteq \BB(X)$ 
whose cardinality is in some cases equal to 
the dimension of $\Pcal(X,-1,J)$. A natural candidate for $\Bcal(X,-1,J)$ would 
be the set $\tilde\Bcal(X,-1,J) := \{ p_{X\setminus (B\cup E(B))} : B\in \BB_-(X,J)\}$. 
In some cases, this is indeed a basis, but in general it has the wrong cardinality or it fails to be contained in $\Pcal(X,-1,J)$ 
(see Example~\ref{Example:NoCanonicalInternalBasis}).
\end{Remark}

\begin{Remark}
The external space has a vector space decomposition
\begin{align}
\Pcal(X,1,\{X \})= \bigoplus_{C\in \Lcal(X)}  \Pcal(X)_C
\label{eq:ExternalDecomposition}
\end{align}
where $\Pcal(X)_C:= \spa\{
 p_Y : \clos(X\setminus Y)=C \} = p_{X\setminus C} \Pcal(C,0,\{C\})$ \cite{berget-2010, orlik-terao-1994}.
This decomposition can be used to deduce 
Theorem \ref{Theorem:HierarchicalBasis} 
for $k=0$ 
from the well-known fact that $\{ p_{X \setminus (B \cup E(B)) \cup I} : B\in \BB(X),\,I\subseteq B \}$
is a basis for $\Pcal(X,1,\{X\})$ \cite{ardila-postnikov-2009, berget-2010, holtz-ron-2011}.
\label{Rem:externalPdecompBases}
\end{Remark}
\begin{Remark}
Corrado De Concini, Claudio Procesi, and Mich\`ele Vergne 
defined  the \emph{remarkable space} or 
\emph{generalized Dahmen-Micchelli space} $\Fcal(X)$
\cite{concini-procesi-book,concini-procesi-vergne-2010}.
This space also has a decomposition in terms of the lattice of flats $\Lcal(X)$.
If $X$ is unimodular, then   $\Pcal(X)_C$ and the summand of  $\Fcal(X)$ that corresponds to the flat $C$
have the same dimension. Furthermore, the two spaces are connected via the duality between $\Pcal$ and $\Dcal$-spaces.
\label{Rem:RemarkableSpaces}
\end{Remark}

\section{Hilbert series}
\label{section:HilbertSeries}
In this section, we give several formulas for the Hilbert series of $\Pcal(X,k,J)$.
The formulas in the first subsection are recursive. %
In the second subsection, we give combinatorial 
formulas for the case $k\ge 0$. The last subsection is devoted to the case
$k=-1$. All formulas only depend on the matroid $\Mfrak(X)$, the integer $k$,  and the upper set $J$, but not on the representation $X$.

\subsection{Recursive formulas}

In this subsection, we give recursive formulas for the calculation of $\hilb(\Pcal(X,k,J),t)$.
The following statement is a direct consequence of Proposition~\ref{Prop:ExactSequence} and of the Main Theorem:
\begin{Corollary}
\label{Th:HilbDelContraction}
\sameterminologyasinmaindefinition
Let $x\in X$ be an element that is not a coloop. For $k=-1$, we assume in addition that $J\supseteq \Hcal$ or $J=\{X\}$, \ie
$J$ contains either all or no hyperplanes.
Then,
 \begin{align}
  \hilb(\Pcal(X,k,J),t) = \begin{cases}
			     \hilb(\Pcal(X\setminus x,k,J\setminus x),t) & \text{if $x$ is a loop} \\
                             t \hilb(\Pcal(X\setminus x,k,J \setminus x),t) \\
				\qquad  + \hilb(\Pcal(X/x,k,J/x),t) & \text{otherwise} 
                            \end{cases} 
 \end{align}
\end{Corollary}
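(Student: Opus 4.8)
The plan is to deduce this corollary directly from the two tools the section header already advertises: Proposition~\ref{Prop:ExactSequence} and the Main Theorem. The Main Theorem identifies $\ker\Ical(X,k,J)$ with $\Pcal(X,k,J)$ (and similarly for the deletion and contraction, using that $J\setminus x$ and $J/x$ are again the relevant upper sets, with the hypothesis $J\supseteq\Hcal$ or $J=\{X\}$ being preserved under deletion and contraction by Remark~\ref{Rem:InternalIdealDelCon}). So everything can be phrased in terms of the $\ker\Ical$-spaces, where Proposition~\ref{Prop:ExactSequence} applies.

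First I would dispose of the loop case: Proposition~\ref{Prop:ExactSequence} states outright that if $x$ is a loop then $\ker\Ical(X\setminus x,k,J\setminus x)=\ker\Ical(X,k,J)$, which immediately gives equality of the Hilbert series, i.e.\ the first branch of the formula. Then, for $x$ not a coloop and not a loop, I invoke the exact sequence of graded vector spaces from Proposition~\ref{Prop:ExactSequence},
\begin{align*}
 0 \to \ker\Ical(X\setminus x, k, J\setminus x)(-1) \stackrel{\cdot p_x}{\longrightarrow} \ker\Ical(X,k,J)
  \stackrel{\sym(\pi_x)}{\longrightarrow} \ker\Ical(X/x, k, J/x) \to 0 .
\end{align*}
Since Hilbert series are additive over short exact sequences of graded vector spaces, and since the degree shift $(-1)$ multiplies the Hilbert series by $t$, this yields
\begin{align*}
 \hilb(\ker\Ical(X,k,J),t) = t\,\hilb(\ker\Ical(X\setminus x,k,J\setminus x),t) + \hilb(\ker\Ical(X/x,k,J/x),t),
\end{align*}
which is exactly the second branch once we translate $\ker\Ical$ back to $\Pcal$ via the Main Theorem. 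The coloop case is excluded by hypothesis, so no further case analysis is needed — though I would remark that when $k=-1$ and $J=\{X\}$, the exact sequence still holds by the same proposition, and when $J\supseteq\Hcal$ the preservation of this condition under deletion/contraction is what lets the induction (implicit in the Main Theorem) go through.

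Honestly, there is no real obstacle here: this is a bookkeeping corollary, and the only thing to be careful about is that all three spaces in the exact sequence are the $\Pcal$-spaces of the same data $(k,J)$, $(k,J\setminus x)$, $(k,J/x)$ respectively, so that the hypotheses of both Proposition~\ref{Prop:ExactSequence} and the Main Theorem are simultaneously met — and this is precisely guaranteed by Remark~\ref{Rem:InternalIdealDelCon} in the $k=-1$ case and is automatic for $k\ge0$. The ``main step'' is thus just citing additivity of $\hilb$ on short exact sequences together with the effect of the grading shift, and the write-up is a couple of lines.
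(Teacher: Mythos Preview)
Your proposal is correct and matches the paper's approach exactly: the paper states the corollary as ``a direct consequence of Proposition~\ref{Prop:ExactSequence} and of the Main Theorem'' with no further argument, and you have simply (and accurately) unpacked what that means --- additivity of Hilbert series on the short exact sequence, the factor of $t$ from the grading shift, and the loop case handled by the explicit clause in Proposition~\ref{Prop:ExactSequence}. One small caveat worth being aware of: for $k=-1$ and $J=\{X\}$ the Main Theorem as stated does not assert $\Pcal=\ker\Ical$, so the translation step leans on the dimension argument buried in the proof of Lemma~\ref{Lemma:HilfslemmaHauptsatz}(v)(b) rather than on the Main Theorem proper --- but this is a looseness already present in the paper's one-line justification, not a defect of your write-up.
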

For coloops, the situation is more complicated and requires an additional definition.
Fix a coloop $x\in X$. Then, $X\setminus x$ is a hyperplane and the following is an upper set: 
\begin{align}
 \widehat{J/x}:= \{  \bar C : x\not \in C \in J   \} \cup \{\overline{X\setminus x}\} \subseteq \Lcal(X/x)
\end{align}
$\widehat{J/x}$ forgets 
about the flats containing $x$, whereas $J/x$ forgets about the flats not containing $x$. While the latter is
 always an upper set in $\Lcal(X/x)$, some elements of %
$\widehat{J/x}$ 
are not closed unless $X\setminus x$ is  a hyperplane.
\begin{Theorem}
\sameterminologyasinmaindefinition
Let $x\in X$ be a coloop and $k\ge 0$. Then,

\begin{align}
 \hilb(\Pcal(X,k,J),t) & = \begin{cases}
				   \sum_{j=0}^{k} t^{j+1} \hilb(\Pcal(X / x,k-j, \widehat{J / x}),t) 
				\\ \qquad + \hilb(\Pcal(X / x,k, J / x),t) & \hspace*{-3mm}\text{if } X\setminus x \in J \\
				 \sum_{j=0}^{k-1} t^{j+1} \hilb(\Pcal(X / x,k-j, \widehat{J / x}),t) \\
                                 \qquad + \hilb(\Pcal(X / x,k, J / x),t)  & \hspace*{-3mm}\text{if } X\setminus x\not \in J  
                            \end{cases} %
\label{eq:HilbertColoopDecomposition}
\end{align}
For $k=-1$, we have
\begin{align}
  \hilb(\ker \Ical(X,-1,J),t) &= \begin{cases}
                             \hilb(\ker \Ical(X/x,k,{J/x})),t) 	& \text{if } X\setminus x \in J \\
		             0			& \text{if } X\setminus x\not \in J  
                            \end{cases}
\end{align}
This formula holds for arbitrary non-empty upper sets $J\subseteq \Lcal(X)$.
\label{Thm:HilbertRecursion}
\end{Theorem}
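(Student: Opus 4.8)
The plan is to treat the cases $k\ge 0$ and $k=-1$ separately, because a coloop is exactly the situation in which the deletion--contraction exact sequence of Proposition~\ref{Prop:ExactSequence} fails (deleting a coloop lowers the rank), and the two auxiliary upper sets $J/x$ and $\widehat{J/x}$ together with the case split on whether $X\setminus x\in J$ are designed precisely to compensate.

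For $k\ge 0$ I would start from the Basis Theorem~\ref{Theorem:HierarchicalBasis}, so that $\hilb(\Pcal(X,k,J),t)=\sum_{(B,I,\vek a_I)\in\Gamma(X,k,J)}t^{\card{X\setminus(B\cup E(B))}+\card I+\sum_{z\in I}a_z}$. The relevant facts about a coloop $x$ are: $x$ lies in every basis, is never externally active, and is always internally active, and the bijection $B\mapsto\bar B$ between $\BB(X)$ and $\BB(X/x)$ preserves the internal and external activity of every other element, since the coloop lies in no circuit and no relevant cocircuit. Hence $\Gamma(X,k,J)$ splits according to whether $x\in I$, and for $x\in I$ further according to the value $a_x=j$. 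When $x\notin I$, Lemma~\ref{Lemma:CharFuncDelCon} (and the fact that $(B\cup E(B))\setminus I$ still contains $x$) identifies this part with $\Gamma(X/x,k,J/x)$ degree-preservingly, contributing $\hilb(\Pcal(X/x,k,J/x),t)$.

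For $x\in I$ with $a_x=j$, write $I=\bar I\cup\{x\}$; the defining inequality becomes $\sum_{y\in\bar I}a_y\le(k-j)-1+\chi_J(C)$ for $C=\clos_X((B\cup E(B))\setminus(\bar I\cup x))$, a flat of $X$ not containing $x$, with image $\bar C$ in $\Lcal(X/x)$. The bookkeeping lemma I would prove is: $\chi_J(C)=\chi_{\widehat{J/x}}(\bar C)$ unless $C=X\setminus x$, whereas for $C=X\setminus x$ one has $\chi_{\widehat{J/x}}(\bar C)=1$ and $\chi_J(C)=1$ or $0$ according as $X\setminus x\in J$ or not; and $C=X\setminus x$ (equivalently $\bar C=X/x$) forces $\bar I=\emptyset$, because for $b\in I(\bar B)$ every externally active element of $\bar B$ lies in the hyperplane $\clos(\bar B\setminus b)$ --- otherwise comparing the fundamental circuit of such an element $e$ with the fundamental cocircuit of $b$ would give simultaneously $e\ge b$ and $e<b$ --- so if $\bar I\ne\emptyset$ then $(\bar B\cup E(\bar B))\setminus\bar I$ lies in a proper flat and cannot span. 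Consequently the tuples with $\bar I\ne\emptyset$ correspond exactly to $\Gamma(X/x,k-j,\widehat{J/x})$, while for $\bar I=\emptyset$ the inequality on the $X$-side reads $j\le k-1+\chi_J(X\setminus x)$ and on the $X/x$-side reads $j\le k$; these coincide when $X\setminus x\in J$ and differ only at $j=k$ otherwise. To finish the case $X\setminus x\notin J$, note that then $\{C'\in J:x\notin C'\}=\emptyset$ (an upper set containing a subflat of the hyperplane $X\setminus x$ already contains $X\setminus x$), so $\widehat{J/x}=\{\overline{X\setminus x}\}$ and $\Gamma(X/x,0,\widehat{J/x})$ contains only tuples with $\bar I=\emptyset$; hence the whole summand $t^{k+1}\hilb(\Pcal(X/x,0,\widehat{J/x}),t)$ is exactly the overcount and must be dropped, which is why the two cases of \eqref{eq:HilbertColoopDecomposition} differ only in the upper limit of the outer sum. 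I expect this last bit --- tracking the ``non-closed'' discrepancy of $\widehat{J/x}$ and checking it is compensated summand by summand --- to be the main obstacle, with the activity claim above as its combinatorial core.

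For $k=-1$, let $\eta$ be the defining normal of the hyperplane $X\setminus x$; since $x$ is a coloop, $\eta(x)\ne 0$, and $\eta^\perp=\spa(X\setminus x)$ is canonically $U/x$. The exponent attached to $\eta$ is $m(X\setminus x)-1+\chi_J(X\setminus x)=\chi_J(X\setminus x)$. If $X\setminus x\notin J$ this is $0$, so $1\in\Ical(X,-1,J)$, hence $\Ical(X,-1,J)=\sym(V)$ and $\ker\Ical(X,-1,J)=0$. If $X\setminus x\in J$ this is $1$, so $D_\eta\in\Ical(X,-1,J)$ and therefore $\ker\Ical(X,-1,J)\subseteq\{f:D_\eta f=0\}=\sym(\eta^\perp)$. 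Writing a general $\mu\in V$ as $\zeta+c\eta$ with $\zeta\in x^\perp$, one has $D_\mu=D_{\bar\zeta}$ on $\sym(\eta^\perp)$; the flat of $X$ cut out by $\mu$ is $C_\zeta$ if $c=0$ and $C_\zeta\setminus x$ if $c\ne0$, and combining this with Lemma~\ref{Lemma:CharFuncDelCon} shows $m_X(\mu)-1+\chi_J(\mu)\ge m_{X/x}(\bar\zeta)-1+\chi_{J/x}(\bar\zeta)$ with equality when $c=0$. Hence, inside $\sym(U/x)$, the generators of $\Ical(X,-1,J)$ annihilate exactly the polynomials annihilated by the generators of $\Ical(X/x,-1,J/x)$, so $\ker\Ical(X,-1,J)=\ker\Ical(X/x,-1,J/x)$ and in particular the Hilbert series agree.
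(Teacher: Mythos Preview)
Your approach is essentially the same as the paper's: for $k\ge 0$ you both invoke the Basis Theorem and build a bijection on $\Gamma$-sets by splitting on whether $x\in I$ and on the value $a_x=j$, and for $k=-1$ you both use the defining normal $\eta$ of the hyperplane $X\setminus x$. Two small differences are worth noting. First, the paper shortcuts your circuit/cocircuit argument by quoting the identity $\clos((B\cup E(B))\setminus I)=\clos(B\setminus I)$ from \cite{bjoerner-1992}, which immediately gives that $C=X\setminus x$ forces $I=\{x\}$ by a rank count; your direct argument reproves the needed special case. Second, your claim that contracting a coloop preserves external activity of every other element is correct (coloops lie in no circuit), even though the paper's proof contains a remark suggesting otherwise and then sidesteps the issue via the Bj\"orner identity; either route yields the required degree bookkeeping. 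Your handling of the $X\setminus x\notin J$ case --- observing that then $\widehat{J/x}=\{X/x\}$, so the entire $j=k$ summand is the overcount --- is the same content as the paper's facts (ii)/(iii) together with its separate remark on $I=\{x\}$, just organised differently. For $k=-1$ your argument is more detailed than the paper's (``easy to check''), and your inequality $m_X(\mu)-1+\chi_J(\mu)\ge m_{X/x}(\bar\zeta)-1+\chi_{J/x}(\bar\zeta)$ with equality at $c=0$ is exactly what is needed.
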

For an example, see Example~\ref{Example:coloopRecursion}.
Actually, we prove a more  general statement, namely decomposition formulas for the $\Pcal$-spaces of type
\begin{align}
\Pcal(X,k,J) \cong \Pcal(X/x, k ,J/x) \oplus \bigoplus_j p_x^{j+1} \Pcal(X / x,k-j, \widehat{J / x})
\end{align}

\begin{proof}[Proof of Theorem~\ref{Thm:HilbertRecursion}]
We  first prove the equation for $k\ge 0$ using
 Theorem~\ref{Theorem:HierarchicalBasis} by showing that there exists a bijection between the bases of the $\Pcal$-spaces appearing on each side.
 
Fix a basis $B\in\BB(X)$. Let $\Gamma_{B}(X,k,J) := \{ (B,I,\vek a_I) \in \Gamma(X,k,J)\}$.
Since $x$ is a coloop, $x$ is contained in every basis 
and always internally active.
Hence, $\overline{I(B)} = I(B /  x) \cup \bar x$. 
A similar relationship between the sets of externally active elements with respect to $B$ and $B/x$ does not exist.
However, we do not need this since
$\clos((B\cup E(B)) \setminus I) = \clos(B \setminus I)$ for all $I\subseteq I(B)$ \cite[(7.13)]{bjoerner-1992}.

Consider the following map:
\begin{align}
\Phi_B:
 \Gamma_B(X,k,J) &\to \Gamma_B(X/x,k,J/x) \DisjUnion \BigDisjUnion_{j=0}^{ k - \eps} \Gamma_B( X/x, k-j, \widehat{J/x} ) \\
 (B,I,\vek a_I) &\mapsto \begin{cases}
                        (\overline{B\setminus x},\bar{I},\vek a_{\bar I}) \in \Gamma_B(X/x,k,\widehat{J/x})  
			    & \text{if } x\not\in I \\
                        (\overline{B\setminus x},\overline{I\setminus x},\vek a_{\bar I}) \in \Gamma_B(X/x, k -  a_x, 
                        \widehat{J/x}) & \text{if } x\in I  
                       \end{cases}
\end{align}
where $\eps=1$ if $X\setminus x\not\in J$ and $0$ otherwise and 
$\vek a_{\bar I}$ denotes the restriction of $\vek a_I$ to  $\N^{I\setminus x}$.

\noindent 
From following three facts, we can deduce that $\Phi_B$ is a bijection:
\begin{compactenum}[(i)]
 \item If $x\not \in I$, then by Lemma~\ref{Lemma:CharFuncDelCon},
$\chi_J(B \setminus I) = \chi_{J/x}(B \setminus (I\cup x))$. 
 \item If $x\in I$ and $X\setminus x\in J$ (\ie we are in the first case of \eqref{eq:HilbertColoopDecomposition}), then $\chi_J(B \setminus I) = \chi_{\widehat{J/x}}(\overline{B \setminus I})$.
 \item If $x\in I$ and $X\setminus x\not\in J$ (\ie we are in the second case of \eqref{eq:HilbertColoopDecomposition}), then $\chi_J(B \setminus I) = \chi_{\widehat{J/x}}(\overline{B \setminus I})=0$.
\end{compactenum}
We have to distinguish the cases $X\setminus x \in J$ and $X\setminus x \not\in J$
for the following reason:
if $I=\{x\}$ and $\chi_J(B \setminus I)=0$, 
then $\Gamma_B(X,k,J)$ contains no elements with $a_x=k$. 
However, $\Gamma_B(X,k-k,\widehat{J/x})\neq \emptyset$, since
by definition, $\chi_{\widehat{J/x}}(\overline{B})=1$.

Furthermore, note that the degrees of the polynomials corresponding to $(B,I,\vek a_I)$ and $(\overline{B\setminus x},
 \overline{I\setminus x},\vek a_{\bar I})$ differ by $a_x+1$ if $x\in I$. If $x\not\in I$, the polynomials corresponding to $(B,I,\vek a_I)$ and $(\overline{B\setminus x},\bar I,\vek a_{\bar I})$ have the same degree.
 This completes the proof for $k\ge 0$.
\medskip

Now we  consider the case $k=-1$. If $X\setminus x\not\in J$, then $\Ical(X,-1,J) = \ideal \{1\}$.
Suppose that $X\setminus x\in J$. 
Let $\eta$ be a defining normal for $X\setminus x$. Then $D_\eta\in \Ical(X,-1,J)$ and it is easy to check that
$\ker \Ical(X,-1,J) \cong \ker \Ical(X/x,-1,J/x)$.
\end{proof}

\subsection{Combinatorial formulas for $k\ge 0$} %
In this subsection, we prove several combinatorial formulas for $\hilb(\Pcal(X,k,J),t)$.
As in the case of the Tutte polynomial, there is a formula that depends on the internal and external activity of the bases of $X$.
For $k=0$, there is also a subset expansion formula  and a particularly simple formula for the dimension.

Theorem~\ref{Theorem:HierarchicalBasis} provides a method to compute the Hilbert series of a $\Pcal$-space combinatorially:
\begin{Corollary}
\sameterminologyasinmaindefinition
Let $k\ge 0$. Then,
 \begin{align}
     \hilb(\Pcal(X,k,J),t) &= \!\!\!\!\sum_{B\in \BB(X) }\!\! t^{N-r-\card{E(B)}} \left( 1 + \!\!\!\sum_{\emptyset\neq I\subseteq I(B)}
   \!\!\!\!\!\!\!
 \sum_{j=0}^{\substack{ \chi((B\cup E(B))\setminus I) \\ + k -1 }}  \!\!\!\! t^{\card{I} + j} 
 \binom{j + \card I  - 1}{\card{I} - 1} \right)
 \nonumber
 \intertext{where $E(B)$ and $I(B)$ denote the sets of externally resp.\ internally active elements. For $k=0$, this specializes to}
     \hilb(\Pcal(X,0,J),t) &= \sum_{B\in \BB(X) : } t^{N-r-\card{E(B)}} 
     \left( 1 + \sum_{\substack{\emptyset\neq I\subseteq I(B) \\ \chi((B\cup E(B))\setminus I)=1}}      t^{\card{I}} \right)
  \end{align}
\label{Cor:HilbertreihenCombinatorisch}
\end{Corollary}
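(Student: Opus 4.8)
The plan is to read the Hilbert series off directly from the explicit basis furnished by Theorem~\ref{Theorem:HierarchicalBasis}. Since $\Bcal(X,k,J)$ is a basis for $\Pcal(X,k,J)$ consisting of homogeneous polynomials, we have $\hilb(\Pcal(X,k,J),t)=\sum_{p\in\Bcal(X,k,J)}t^{\deg p}$. The proof of Theorem~\ref{Theorem:HierarchicalBasis} shows, as a by-product, that the assignment $(B,I,\vek a_I)\mapsto p_{X\setminus(B\cup E(B))}\prod_{x\in I}p_x^{a_x+1}$ is a bijection from $\Gamma(X,k,J)$ onto the linearly independent set $\Bcal(X,k,J)$; hence no cancellation or overcounting occurs and the sum may be indexed by $\Gamma(X,k,J)$ itself.

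Next I would compute the degree of the polynomial attached to a triple $(B,I,\vek a_I)$. Because $E(B)\subseteq X\setminus B$ and $\card B=r$, we have $B\cap E(B)=\emptyset$, so $\card{X\setminus(B\cup E(B))}=N-r-\card{E(B)}$, and therefore the degree equals $N-r-\card{E(B)}+\card I+\sum_{x\in I}a_x$. Now group the triples of $\Gamma(X,k,J)$ first by the underlying basis $B\in\BB(X)$, then by the subset $I\subseteq I(B)$, and finally by the coordinate sum $j:=\sum_{x\in I}a_x$. The number of tuples $\vek a_I\in\N^I$ with $\sum_{x\in I}a_x=j$ is $\binom{j+\card I-1}{\card I-1}$ by a stars-and-bars count, and the constraint in \eqref{eq:DefinitionGamma} says precisely that $0\le j\le k+\chi((B\cup E(B))\setminus I)-1$. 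The contribution of all triples with a fixed $B$ and $I\ne\emptyset$ is thus $t^{N-r-\card{E(B)}}\sum_{j=0}^{\chi((B\cup E(B))\setminus I)+k-1}t^{\card I+j}\binom{j+\card I-1}{\card I-1}$. For $I=\emptyset$ the only triple is $(B,\emptyset,\cdot)$: here $\clos(B\cup E(B))=X$ lies in the non-empty upper set $J$, so $\chi((B\cup E(B))\setminus\emptyset)=1$ and the constraint $0\le k$ holds, and this triple contributes $t^{N-r-\card{E(B)}}$, i.e.\ the ``$1$'' inside the parentheses. Summing over $B$ gives the first displayed formula.

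Finally, the $k=0$ case is an immediate specialization: then $\chi((B\cup E(B))\setminus I)+k-1$ equals $-1$ when $\chi((B\cup E(B))\setminus I)=0$, so the inner sum over $j$ is empty, and equals $0$ when $\chi((B\cup E(B))\setminus I)=1$, in which case only $j=0$ survives with multiplicity $\binom{\card I-1}{\card I-1}=1$, contributing $t^{\card I}$. I do not expect a real obstacle here: the one point that must not be glossed over is that the parametrization $\Gamma(X,k,J)\to\Bcal(X,k,J)$ is injective onto a linearly independent set, so that counting basis polynomials by degree is the same as counting index triples by degree — but this is exactly what the proof of Theorem~\ref{Theorem:HierarchicalBasis} establishes, and the rest is elementary bookkeeping with degrees and a stars-and-bars sum.
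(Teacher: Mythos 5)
Your proposal is correct and matches the paper's (implicit) approach exactly: the paper presents this Corollary without a separate proof, simply as a read-off from the basis $\Bcal(X,k,J)$ constructed in Theorem~\ref{Theorem:HierarchicalBasis}, which is precisely the degree bookkeeping with stars-and-bars that you carry out. The one detail you correctly flag --- that the parametrization $\Gamma(X,k,J)\to\Bcal(X,k,J)$ is a bijection --- is indeed established in that theorem's proof (injectivity follows since $\Gamma(X,k,J)\subseteq\Gamma(X,k,\Lcal(X))$ and the latter is bijectively mapped to the Ardila--Postnikov basis), and your observation that the $I=\emptyset$ triple always satisfies the constraint because $\clos(B\cup E(B))=X\in J$ is the right justification for the ``$1$'' inside the parentheses.
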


\smallskip
Corollary \ref{Cor:HilbertreihenCombinatorisch} gives a formula in terms of the internal and external activity of the bases of $X$. 
For $k=0$, there is also a subset expansion formula similar to the one for the Tutte polynomial.
In the internal, central and external case, the Hilbert series
of the $\Pcal$-spaces are evaluations of the Tutte polynomial \cite{ardila-postnikov-2009}.
In particular, 
\begin{align}
  \hilb(\Pcal(X,0,\{X\}),t )  &=  t^{N-r}\!\!\sum_{\substack{A\subseteq X\\\rank(A)=r}}   \left(\frac 1t - 1\right)^{\card A - \rank(A) } 
   = t^{N-r} T_X\left(1,\frac 1t\right)
 \label{eq:CentralTutteEvaluation}
\\
  \hilb(\Pcal(X,1,\{X\}),t )  &= 
 t^{N-r}\sum_{A\subseteq X} t^{r-\rank(A) }  \left(\frac 1t - 1\right)^{\card A - \rank(A) }
 \label{eq:ExternalTutteEvaluation}
\\
 &= t^{N-r} T_X\left(1+t,\frac 1t\right) \nonumber
\end{align}
When looking at these  two formulas, one might wonder if it is possible
to find an ``interpolating'' formula for the semi-external case. Indeed,  the natural
guess works:
if $\chi(A)=1$, we take the corresponding summand from \eqref{eq:ExternalTutteEvaluation}
 and if $\chi(A)=0$, we take the corresponding summand from \eqref{eq:CentralTutteEvaluation}.
Note that the latter term is always $0$.
In the semi-internal case however, the analogous statement is false.

\begin{Theorem}
\sameterminologyasinmaindefinition

 \begin{align}
  \hilb(\Pcal(X,0,J),t ) & =   
 t^{N-r} \sum_{\substack{A\subseteq X\\ \chi(A)=1}} t^{r-\rank(A)}   \left(\frac 1t - 1\right)^{\card A - \rank(A) }  \label{eq:HilbTutteTypeExpression}
 \end{align}
\label{Th:HilbTutteTypeExpression}
\end{Theorem}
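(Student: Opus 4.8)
The plan is to derive the subset‐expansion formula \eqref{eq:HilbTutteTypeExpression} from the combinatorial (activities) formula for $\hilb(\Pcal(X,0,J),t)$ given in Corollary~\ref{Cor:HilbertreihenCombinatorisch}, mimicking the standard passage between the spanning–forest expansion and the subset expansion of the Tutte polynomial. First I would record the $k=0$ specialization from Corollary~\ref{Cor:HilbertreihenCombinatorisch}:
\begin{align*}
 \hilb(\Pcal(X,0,J),t) = \sum_{B\in \BB(X)} t^{N-r-\card{E(B)}}\Bigl( 1 + \sum_{\substack{\emptyset\neq I\subseteq I(B)\\ \chi((B\cup E(B))\setminus I)=1}} t^{\card I}\Bigr),
\end{align*}
and then rewrite the inner sum over $I\subseteq I(B)$ (including $I=\emptyset$) as $\sum_{I\subseteq I(B)} t^{\card I}\chi((B\cup E(B))\setminus I)$, using $\chi(B\cup E(B))=\chi(X)=1$ to absorb the constant term. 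The goal is to show this equals the right–hand side of \eqref{eq:HilbTutteTypeExpression}, i.e. $t^{N-r}\sum_{A:\chi(A)=1} t^{r-\rank(A)}(1/t-1)^{\card A-\rank(A)}$.

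The key step is to set up the bijective correspondence, for a fixed basis $B$, between the pairs $(B,I)$ with $I\subseteq I(B)$ and the subsets $A\subseteq X$ whose associated basis under the activities decomposition is $B$. Concretely, Crapo's theorem / the standard activities decomposition of the Boolean lattice of subsets says that the interval associated to $B$ consists of exactly those $A$ with $B\setminus I(B)\subseteq A\subseteq B\cup E(B)$; such an $A$ is written uniquely as $A=(B\setminus I)\cup F$ with $I\subseteq I(B)$ and $F\subseteq E(B)$, and it satisfies $\rank(A)=r-\card I$ (since $A$ contains the independent set $B\setminus I$ and $\clos(A)=\clos(B\cup E(B))\cap\cdots$ — more precisely $\rank(A)=\rank(B\setminus I)=r-\card I$ because removing internally active elements strictly drops the rank and adding externally active ones does not raise it), hence $\card A-\rank(A)=\card F$. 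I would then compute the contribution of the interval of $B$ to the right side of \eqref{eq:HilbTutteTypeExpression}:
\begin{align*}
 \sum_{\substack{A\text{ in interval of }B\\ \chi(A)=1}} t^{r-\rank(A)}\Bigl(\tfrac1t-1\Bigr)^{\card A-\rank(A)}
  = \sum_{I\subseteq I(B)}\ \sum_{F\subseteq E(B)} \chi\bigl((B\setminus I)\cup F\bigr)\, t^{\card I}\Bigl(\tfrac1t-1\Bigr)^{\card F}.
\end{align*}
Now the crucial observation is that $\chi$ depends only on the closure, and for $A=(B\setminus I)\cup F$ in this interval one has $\clos(A)=\clos((B\cup E(B))\setminus I)$ independently of $F$ — this is the same fact $\clos((B\cup E(B))\setminus I)=\clos(B\setminus I)$ cited as \cite[(7.13)]{bjoerner-1992} in the proof of Theorem~\ref{Thm:HilbertRecursion}, applied after noting adding externally active elements does not change the closure of $B\setminus I$. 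Hence $\chi((B\setminus I)\cup F)=\chi((B\cup E(B))\setminus I)$ factors out of the $F$–sum, and $\sum_{F\subseteq E(B)}(1/t-1)^{\card F} = (1+(1/t-1))^{\card{E(B)}} = t^{-\card{E(B)}}$, so the interval contributes $t^{-\card{E(B)}}\sum_{I\subseteq I(B)} t^{\card I}\chi((B\cup E(B))\setminus I)$. Summing over $B\in\BB(X)$ and multiplying by $t^{N-r}$ reproduces exactly the activities formula above, completing the proof.

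The main obstacle I anticipate is the bookkeeping around the closure/rank identities: one must verify carefully that for every $A$ in the activities interval of $B$ the equalities $\rank(A)=r-\card I$ and $\clos(A)=\clos((B\cup E(B))\setminus I)$ hold (so that $\chi(A)$ is genuinely constant on the $F$–fibre over $(B,I)$), and that the Boolean lattice decomposes as the disjoint union of these intervals — i.e. every $A\subseteq X$ lies in exactly one of them. These are classical matroid facts (Crapo, Björner), but they need to be invoked precisely; everything else is the routine binomial collapse $\sum_F (1/t-1)^{\card F}=t^{-\card{E(B)}}$ together with Corollary~\ref{Cor:HilbertreihenCombinatorisch}. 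Alternatively, if one prefers to avoid re–deriving the interval decomposition, one can instead prove \eqref{eq:HilbTutteTypeExpression} directly by deletion–contraction: both sides satisfy the recursion of Corollary~\ref{Th:HilbDelContraction} for non-coloops (splitting the subset sum over $x\in A$ versus $x\notin A$, using $\rank$ additivity and $\chi_{J\setminus x}(A)=\chi_J(A)$, $\chi_{J/x}(\bar A)=\chi_J(A\cup x)$ from Lemma~\ref{Lemma:CharFuncDelCon}) and the coloop recursion, and they agree on the base case of a basis in general position by Proposition~\ref{Prop:PspaceGeneralPosition}; this is cleaner if the interval combinatorics turns out to be delicate.
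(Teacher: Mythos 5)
Your proposal is correct, but it takes a genuinely different route from the paper. The paper proves \eqref{eq:HilbTutteTypeExpression} by deletion--contraction: after disposing of loops directly, the base case (only coloops) is handled via Proposition~\ref{Prop:PspaceGeneralPosition}, and the inductive step invokes Corollary~\ref{Th:HilbDelContraction} together with Lemma~\ref{Lemma:CharFuncDelCon} to show that both sides of \eqref{eq:HilbTutteTypeExpression} satisfy the same recursion. Your main argument instead derives the subset expansion from the activities expansion of Corollary~\ref{Cor:HilbertreihenCombinatorisch} by appealing to Crapo's interval decomposition of $2^X$ into $[B\setminus I(B), B\cup E(B)]$ over bases $B$. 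The key steps you identify are exactly right: using $\clos((B\cup E(B))\setminus I)=\clos(B\setminus I)$ (the same reference \cite[(7.13)]{bjoerner-1992} the paper uses elsewhere) one gets $\rank(A)=r-\card I$ and $\clos(A)$ independent of $F$ for every $A=(B\setminus I)\cup F$ in the interval of $B$, so that $\chi$ is constant on each $F$-fibre and the binomial collapse $\sum_{F\subseteq E(B)}(1/t-1)^{\card F}=t^{-\card{E(B)}}$ reproduces the activities formula. The trade-off between the two proofs: yours makes the combinatorial content (how the subset expansion and the basis expansion are related) completely explicit and avoids re-running the induction, but it depends on Theorem~\ref{Theorem:HierarchicalBasis} (through Corollary~\ref{Cor:HilbertreihenCombinatorisch}) and on the external fact that Crapo's intervals tile the Boolean lattice; the paper's proof only needs the exact-sequence machinery (Proposition~\ref{Prop:ExactSequence} via Corollary~\ref{Th:HilbDelContraction}) and Proposition~\ref{Prop:PspaceGeneralPosition}, so it is independent of the basis theorem. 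Your noted ``alternative'' at the end --- proving that both sides obey the same deletion--contraction recursion and agree for a basis in general position --- is in fact precisely the paper's proof.
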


\begin{proof}%
As usual, we prove this statement by  deletion-contraction.
In this proof, we denote the polynomial on the right side of \eqref{eq:HilbTutteTypeExpression} by $T_{(X,J)}(t)$.

Let $x\in X$ be a loop and let $A\subseteq X\setminus x$. 
If $\chi_J(A)=0$, $A$ contributes neither to $T_{(X\setminus x,J \setminus x)}(t)$
nor to $T_{(X\setminus x,J \setminus x)}(t)$. If $\chi_J(A)=1$, 
$A$ contributes to $T_{(X\setminus x,J \setminus x)}(t)$
 the term
$t^{N-r-1} t^{r - \rank(A)}(1/t- 1)^{\card A - \rank(A)}=:f_A$. To $T_{(X,J)}(t)$, $A$ contributes the term $tf_A$ and $A\cup x$ contributes $t(1/t-1)f_A$. This implies $T_{(X\setminus x, J\setminus x)}(t)=T_{(X,J)}(t)$.

From now on, we suppose that $X$ does not contain any loops.
Suppose that $X$ contains only coloops. 
By Proposition~\ref{Prop:PspaceGeneralPosition},
\begin{align}
 \Pcal(X,0,J) =& \spa \{ p_Y : X\setminus Y \in J \}
\label{eq:PSpaceColoops}\\
\intertext{It is easy to see that  $T_{(X,J)}(t)$ is the Hilbert series of \eqref{eq:PSpaceColoops}:}
 T_{(X,J)}(t) %
 &= t^{N-r} \sum_{\substack{A\subseteq X\\ A\in J}} t^{r-\card A}  
  =  \sum_{\substack{A\subseteq X\\X\setminus A \in J}} t^{\card A}  \label{eq:HilbTutteColoops}
 \end{align}

Now suppose that $x\in X$ is neither a loop nor a coloop. By induction, we may suppose that 
\eqref{eq:HilbTutteTypeExpression} holds for $X/x$ and $X\setminus x$.
Using Corollary~\ref{Th:HilbDelContraction}
and Lemma~\ref{Lemma:CharFuncDelCon}, we obtain

\begin{align*}
 \hilb(\Pcal(X,0,J),t ) &= t \hilb(\Pcal(X\setminus x,k,J \setminus x),t) + \hilb(\Pcal(X/x,k,J/x),t) \\
 &= 
t^{N-r} \!\!\!\! \sum_{\substack{A \subseteq X\setminus x\\ \chi_{J\setminus x}(A) = 1 }} \!\!\! t^{r-\rank(A)}   \left(\frac 1t - 1\right)^{\card A - \rank(A) } 
 \\
&
\qquad + t^{N-r} \!\!\!\!\sum_{\substack{ \bar A \in X/x \\ \chi_{J/x}(\bar A)=1}}\!\!\! t^{(r-1)-\rank(\bar A)}   \left(\frac 1t - 1\right)^{\card{\bar A} - \rank(\bar A) } 
 \\
 &= t^{N-r} \sum_{\substack{x\not\in A\\ \chi_J(A)=1}} t^{r- \rank(A)}   \left(\frac 1t - 1\right)^{\card A - \rank(A) } 
 \displaybreak[2] \\
 & \qquad  + t^{N-r} \sum_{\substack{x\in A\\  \chi_J(A)=1}} t^{r- \rank(A)}   \left(\frac 1t - 1\right)^{\card A - \rank(A) }
 \displaybreak[1] \\
  &= 
 t^{N-r} \sum_{ \substack{A\subseteq X \\ \chi(A)=1 }} t^{r- \rank(A)}   \left(\frac 1t - 1\right)^{\card A - \rank(A) }
 = T_{(X,J)}(t) 
 \end{align*}
\end{proof}

If we set $t=1$ in  Theorem \ref{Th:HilbTutteTypeExpression}, we immediately obtain a result
which relates the dimension of $\Pcal(X,0,J)$  and the number of independent sets satisfying a certain property.
This was already proven with a different method by Holtz, Ron, and Xu \cite{holtz-ron-xu-2012}.
\begin{Corollary}
 \begin{align}
   \dim \Pcal(X,0,J) &=  \card{\{ Y\subseteq X : Y \text{ independent},\, \clos(Y)\in J \}} %
 \end{align}
\end{Corollary}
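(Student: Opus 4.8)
The plan is to derive this immediately from Theorem~\ref{Th:HilbTutteTypeExpression} by specializing at $t=1$. Recall that $\hilb(\Pcal(X,0,J),t) = \sum_{i\ge 0} \dim \Pcal(X,0,J)_i\, t^i$, so evaluating the Hilbert series at $t=1$ yields $\dim \Pcal(X,0,J)$ (the space is finite-dimensional, being the kernel of a finite-codimension ideal by the Main Theorem). Thus it suffices to compute the right-hand side of \eqref{eq:HilbTutteTypeExpression} at $t=1$.

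The key observation is that in the sum
\begin{align*}
 t^{N-r} \sum_{\substack{A\subseteq X\\ \chi(A)=1}} t^{r-\rank(A)} \left(\tfrac 1t - 1\right)^{\card A - \rank(A)},
\end{align*}
the factor $\left(\tfrac1t - 1\right)^{\card A - \rank(A)}$ vanishes at $t=1$ unless $\card A = \rank(A)$, i.e.\ unless $A$ is an independent set; in that case the factor equals $1$ (an empty product), and the prefactor $t^{N-r}\cdot t^{r-\rank(A)}$ also equals $1$. Therefore, setting $t=1$, the sum collapses to the number of subsets $A\subseteq X$ that are independent and satisfy $\chi(A)=1$. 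By definition $\chi(A)=\chi(\clos(A))$, so $\chi(A)=1$ is equivalent to $\clos(A)\in J$. Hence the right-hand side evaluates to $\card{\{Y\subseteq X : Y\text{ independent},\ \clos(Y)\in J\}}$, which is exactly the claimed formula.

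There is essentially no obstacle here: the only point requiring a word of care is the interchange of the limit $t\to 1$ with the finite sum, which is trivial since the sum is finite and each summand is a Laurent polynomial in $t$ that is regular at $t=1$ (the negative powers of $t$ coming from $\left(\tfrac1t-1\right)^{\card A-\rank(A)}$ are always cancelled, term by term, by the $t^{N-\rank(A)}$ prefactor, since $\card A - \rank(A) \le N - \rank(A)$). One could alternatively note that the whole expression is manifestly a polynomial in $t$, as it is a Hilbert series, so evaluation at $t=1$ is unproblematic. This makes the corollary a one-line consequence of Theorem~\ref{Th:HilbTutteTypeExpression}, and I would present it as such.
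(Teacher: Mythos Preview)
Your argument is correct and is exactly the paper's approach: the corollary is obtained by setting $t=1$ in Theorem~\ref{Th:HilbTutteTypeExpression}, whereupon only the independent sets $A$ with $\chi(A)=1$ survive. Your additional remarks about regularity at $t=1$ are fine but not strictly needed, since the Hilbert series is a polynomial.
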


\begin{Remark}
It is possible to deduce Theorem \ref{Th:HilbTutteTypeExpression}
directly from \eqref{eq:CentralTutteEvaluation}.
This can be done using the decomposition \eqref{eq:ExternalDecomposition} of the external space.
\end{Remark}

\subsection{The case $k = -1$}
\label{subsect:semiIntHilb}

For $k=-1$, we do not know if there is such a nice formula as in Corollary~\ref{Cor:HilbertreihenCombinatorisch} or  Theorem~\ref{Th:HilbTutteTypeExpression}. 
The set $\BB_-(X,J):=\{ B\in \BB(X)  : \chi(B\setminus I(B)) =1 \} $ has in some cases several nice properties, but in general
the cardinality of $\BB_-(X,J)$ depends  on the order imposed on $X$ 
(cf. Remark \ref{Remark:InternalBases}). Consider for example a sequence $X$ of three vectors $a,b,c$ in general position in a two-dimensional vector space and the ideal $J=\{X,\{a\}\}$. Depending on the order, $\BB_-(X,J)$ may have cardinality $1$ or $2$.

Fix $C_0\in \Lcal(X)$ and set $J_{C_0}:=\{C\in \Lcal(X) : C\supseteq C_0\}$.
All maximal missing flats in $J_{C_0}$ are hyperplanes. They have unique defining normals (up to scaling).
Then $\ker \Ical(X,-1,J_{C_0})=\ker \Ical'(X,-1,J_{C_0}) = \bigcap_{x\in C_0} \Pcal(X\setminus x, 0, \{X\setminus x\})$. This was shown by Holtz, Ron, and Xu \cite{holtz-ron-xu-2012}.
They also show that for a specific order on $X$ (see below), $\card{\BB_-(X,J_{C_0})}=\dim \ker \Ical(X,-1,J_{C_0})$ and that
$\BB_-(X,J_{C_0})$ can be used to calculate the Hilbert series:

\begin{Theorem}[{\cite[p.\ 20]{holtz-ron-xu-2012}}]
\sameterminologyasinmaindefinition 
In addition, let $C_0\in \Lcal(X)$. Then,
 \begin{align}
     \hilb(\ker \Ical(X,-1,J_{C_0}),t) = \sum_{ B\in \BB_-(X,J_{C_0}) } t^{N-r-\card{E(B)}}  
 \end{align}
\label{Thm:SemiInternalHilbertSeries}
\end{Theorem}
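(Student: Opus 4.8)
The plan is to prove Theorem~\ref{Thm:SemiInternalHilbertSeries} by the same deletion--contraction strategy used throughout the paper, now specialized to the upper set $J_{C_0}$ with $C_0\in\Lcal(X)$ fixed. The statement to establish is that $\hilb(\ker\Ical(X,-1,J_{C_0}),t)=\sum_{B\in\BB_-(X,J_{C_0})} t^{N-r-\card{E(B)}}$, where $\BB_-(X,J_{C_0})=\{B\in\BB(X) : \chi_{J_{C_0}}(B\setminus I(B))=1\}$ with respect to a suitable fixed order on $X$. Since the left-hand side is already known to be independent of the representation and of the order (Corollary~\ref{Thm:SemiInternalHilbertSeries} follows from the Main Theorem plus the Hilbert-series results), the only genuinely order-sensitive object is $\BB_-(X,J_{C_0})$, so the first task is to pin down exactly which order Holtz--Ron--Xu use and to record the key fact that for \emph{that} order the right-hand side is also order-independent; I would cite \cite{holtz-ron-xu-2012} for the precise order and for the identity $\ker\Ical(X,-1,J_{C_0})=\bigcap_{x\in C_0}\Pcal(X\setminus x,0,\{X\setminus x\})$ stated in the text.

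First I would dispose of the base and degenerate cases. If $X$ contains a loop $x$, then $x$ lies in every flat, $\Lcal(X)\cong\Lcal(X\setminus x)$, $C_0$ corresponds to a flat of $X\setminus x$, $\ker\Ical$ is unchanged (Proposition~\ref{Prop:ExactSequence}), and $x$ is externally active in every basis while belonging to no basis, so $\BB_-$ and the exponents $N-r-\card{E(B)}$ are unchanged; hence assume $X$ is loopless. If $X$ consists only of coloops, then $X$ is a basis, $C_0=X\setminus\{x_i:i\in I\}$ for some $I\subseteq[r]$, and by Proposition~\ref{Prop:PspaceGeneralPosition} together with the $k=-1$ part of Proposition~\ref{Prop:PspaceGeneralPosition} the kernel is either all of $\sym(U)_{\le 0}$-type data or zero depending on whether $J_{C_0}\supseteq\Hcal$; matching this against the single basis $B=X$, whose internal activity is all of $X$ and whose external activity is empty, gives the claim directly. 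For a coloop $x$ in a general $X$, I would invoke the $k=-1$ part of Theorem~\ref{Thm:HilbertRecursion}: $\hilb(\ker\Ical(X,-1,J_{C_0}),t)$ equals $\hilb(\ker\Ical(X/x,-1,J_{C_0}/x),t)$ if $X\setminus x\in J_{C_0}$ (i.e.\ $C_0\subseteq X\setminus x$, equivalently $x\notin C_0$) and $0$ otherwise, and check that $\BB_-$ transforms the same way, using that a coloop is always internally active and contained in every basis so that $B\mapsto B/x$ is a bijection $\BB(X)\to\BB(X/x)$ preserving external activity and, via $\overline{I(B)}=I(B/x)\cup\bar x$ and $\clos((B\cup E(B))\setminus I)=\clos(B\setminus I)$, preserving the defining condition of $\BB_-$.

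The main inductive step is: $x\in X$ neither loop nor coloop, and (after choosing the order so that $x$ is minimal, which is legitimate since both sides are order-independent for this family) I want
\begin{align*}
\hilb(\ker\Ical(X,-1,J_{C_0}),t) &= t\,\hilb(\ker\Ical(X\setminus x,-1,(J_{C_0})\setminus x),t) \\ &\qquad + \hilb(\ker\Ical(X/x,-1,(J_{C_0})/x),t),
\end{align*}
which is Corollary~\ref{Th:HilbDelContraction}, provided $(J_{C_0})\setminus x$ and $(J_{C_0})/x$ are again upper sets of the form $J_{C}$ for appropriate flats $C$ of $X\setminus x$ and $X/x$ (or satisfy the $J\supseteq\Hcal$ / $J=\{X\}$ hypothesis of Corollary~\ref{Th:HilbDelContraction}); I would verify this small bookkeeping lemma: $(J_{C_0})/x = J_{\overline{\clos(C_0\cup x)\setminus x}}$ when $x\notin\clos(C_0)$ and is all of $\Lcal(X/x)$ otherwise, and $(J_{C_0})\setminus x$ is $J_{C_0}$ (as a flat of $X\setminus x$) when $x\notin C_0$, handling the contains-$x$ case separately. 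On the combinatorial side I would mirror exactly the partition used in the proof of Theorem~\ref{Theorem:HierarchicalBasis}: $\BB(X)=\BB(X\setminus x)\DisjUnion\iota(\BB(X/x))$ via part~\eqref{enum:HierarchicalBasisProofObservation} of that proof's observation (for the minimal element $x$, $x$ is externally active in $B\not\ni x$ iff $x$ is a loop, internally active in $B\ni x$ iff coloop), so here $x$ is always externally active in bases avoiding it and never internally active in bases containing it; using Lemma~\ref{Lemma:CharFuncDelCon} to translate the condition $\chi_{J_{C_0}}(B\setminus I(B))=1$ through deletion and contraction, and tracking the external-activity count ($\card{E(B)}$ for $B\in\BB(X\setminus x)$ gains exactly one from the now-active $x$, contributing the factor $t$; $\card{E(B\cup x)}$ relative to $\card{E(\overline{B})}$ is unchanged because $x$ contributes nothing and the activities of the other elements are preserved by the standard minimal-element argument), I get that $\sum_{B\in\BB_-(X,J_{C_0})}t^{N-r-\card{E(B)}}$ splits as $t\sum_{\BB_-(X\setminus x)}+\sum_{\BB_-(X/x)}$, which by induction equals the two Hilbert-series summands above.

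The step I expect to be the main obstacle is the order-sensitivity bookkeeping around $\BB_-$: unlike $I(B)$ and $E(B)$ in the ordinary Tutte setting, here the set $\BB_-(X,J)$ really does depend on the order for general $J$ (the three-vectors example in Subsection~\ref{subsect:semiIntHilb} shows cardinality $1$ or $2$), so the argument is only valid for the special order specified in \cite{holtz-ron-xu-2012}, and I must be careful that (a) this chosen order is compatible with ``$x$ minimal'' throughout the recursion, i.e.\ that deleting/contracting the minimal element leaves the induced order on $X\setminus x$ and $X/x$ again of the required special type, and (b) the invariance statement ``$\card{\BB_-(X,J_{C_0})}$ is order-independent \emph{for this special order}'' is exactly what lets me reduce to $x$ minimal. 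If establishing (a) from scratch is delicate, the cleaner route is to not re-derive it but to cite it: the right-hand side's order-independence for the special order is part of the Holtz--Ron--Xu result, and then the whole proof reduces to checking that \emph{both} sides obey the same deletion--contraction recursion with the same base cases, at which point equality is forced. I would structure the final write-up that way, keeping the novel content to the verification that $\BB_-(X,J_{C_0})$ obeys the recursion, since the Hilbert-series side is immediate from Corollary~\ref{Th:HilbDelContraction} and Theorem~\ref{Thm:HilbertRecursion}.
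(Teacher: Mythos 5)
Your proof has a genuine gap at its core step. The deletion--contraction recursion you want to apply to the left-hand side, namely Corollary~\ref{Th:HilbDelContraction} (equivalently Proposition~\ref{Prop:ExactSequence}) for $k=-1$, is only established in the paper under the hypothesis $J\supseteq\Hcal$ or $J=\{X\}$. For a flat $C_0$ with $0<\rank(C_0)<r$ the upper set $J_{C_0}$ satisfies \emph{neither}: it contains some but not all hyperplanes. So you cannot invoke these results for $\ker\Ical(X,-1,J_{C_0})$, and the claim that ``the Hilbert-series side is immediate from Corollary~\ref{Th:HilbDelContraction}'' is unfounded. This is not a technicality: Example~\ref{example:InternalAdditionalCondition} in the paper takes $J_5=J_{\{x_1\}}$, which is exactly of the form $J_{C_0}$, and shows that the exact sequence of Proposition~\ref{Prop:ExactSequence} fails for this $J$ and the element $x_1$. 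You try to sidestep this by choosing $x$ minimal (hence, with the Holtz--Ron--Xu order, $x\notin C_0$), but the paper's proof of surjectivity of $\sym(\pi_x)$ in part~(v) of Lemma~\ref{Lemma:HilfslemmaHauptsatz} for $k=-1$ only treats the two subcases $J\supseteq\Hcal$ and $J=\{X\}$ (the latter by a dimension count relying on the known internal-bases formula), so a new surjectivity argument would be required for $J_{C_0}$ and you do not supply one. Your ``cleaner route'' fallback in the last paragraph is circular: you would be citing the very order-dependent combinatorial result, and the Hilbert-series recursion on the left is the missing ingredient, not the known one.

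It is also worth saying explicitly that the paper itself does not prove this theorem; it is cited from Holtz--Ron--Xu, and the paper's brief description of their argument is structurally different from yours. Their proof exploits the identity $\ker\Ical(X,-1,J_{C_0})=\bigcap_{x\in C_0}\Pcal(X\setminus x,0,\{X\setminus x\})$ together with an order chosen so that a fixed independent spanning subset $I\subseteq C_0$ is maximal; it is not a deletion--contraction induction. Indeed the paper's remark immediately after the theorem statement, that the Holtz--Ron--Xu construction ``makes it difficult or impossible to adjust this proof to a more general setting,'' and Remark~\ref{Remark:InternalBases}'s statement that no simple basis construction is known for $k=-1$, are both signals that the clean recursive machinery available for $k\ge 0$ does not transfer. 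If you want to pursue a deletion--contraction proof, the content you would actually need to establish is a version of Proposition~\ref{Prop:ExactSequence} valid for $J=J_{C_0}$ with $x$ restricted appropriately relative to the Holtz--Ron--Xu order, and that is precisely what is missing from the proposal.
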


The proof in \cite{holtz-ron-xu-2012} relies on the following construction:
 an independent spanning subset $I\subseteq C_0$ is fixed and the order is chosen \st the elements of $I$ are maximal.
This makes it difficult or impossible to adjust this proof to a more general setting.

\section{Zonotopal Cox Rings}
\label{section:cox}
In this section, we briefly describe the zonotopal Cox rings defined by 
Sturmfels and Xu  \cite{sturmfels-xu-2010} and we show that our Main Theorem can be used to generalize
a result on zonotopal Cox modules due to Ardila and Postnikov \cite{ardila-postnikov-2009}.

Fix $m$ vectors  $D_1,\ldots, D_m\in V$ and $\vek{u}=(u_1,\ldots,u_m)\in \N^m$.
Sturmfels and Xu \cite{sturmfels-xu-2010} introduced the Cox-Nagata ring 
$R^G \subseteq \K[s_1,\ldots, s_m,t_1,\ldots, t_m]$. 
This is the ring of polynomials that are invariant under 
 the action of a certain group $G$ which depends on the vectors $D_1,\ldots, D_m$.
 It is multigraded with a $\Z^{m+1}$-grading. 
For $r\ge 3$, $R^G$ is equal to the Cox ring of the variety $X_G$ which is gotten from $\mathbb P^{r-1}$ by blowing up the points $D_1,\ldots, D_m$.
Cox rings have received a considerable amount of attention in the recent literature in algebraic geometry. See \cite{laface-velasco-2009} for a survey.

Cox-Nagata rings are closely related to power ideals: %
let $\Ical_\vek{u} := \ideal\{ D_1^{u_1+1}, \ldots,\linebreak[2] D_m^{u_m+1} \}$
and let $\Ical_{d,\vek{u}}^{-1}$ denote the homogeneous component of grade $d$ of $\ker \Ical_\vek{u}$.
Then, $R^G_{(d,\vek{u})}$, the homogeneous component of $R^G$ of grade $(d,\vek u)$, is naturally isomorphic  to $\Ical_{d,\vek{u}}$ (\cite[Proposition 2.1]{sturmfels-xu-2010}).

Cox-Nagata rings are an object of great interest but in general, it is quite difficult to understand their structure.
However, for some choices of the vectors $D_1,\ldots, D_m$, we understand a natural subring of the Cox-Nagata ring very well.

Let $\Hcal=\{H_1,\ldots, H_m\}$ be the set of hyperplanes in $\Lcal(X)$.
$\mathfrak H \in \{0,1\}^{m \times N}$ denotes the non-containment hyperplane-vector matrix, \ie the 0-1 matrix 
whose $(i,j)$ entry 
is $1$ if and only if  $H_i$ does not contain $x_j$.

Sturmfels and Xu defined the following structures:
the \emph{zonotopal Cox ring}
\begin{align}
 \Zcal(X):= \bigoplus_{(d,\vek a)\in \N^{N+1}} R^G_{(d,\mathfrak H\vek a)}
\intertext{and for $\vek w\in \Z^n$ the \emph{zonotopal Cox module} of shift $\vek w$} 
 \Zcal(X,\vek w):= \bigoplus_{(d,\vek a)\in \N^{N+1}} R^G_{(d,\mathfrak H\vek a + \vek w)}
\end{align}
Fix a vector $\vek a\in \N^N$.
Let $X(\vek a)$ denote the sequence of $\sum_{i} a_i$ vectors in $U$ that is ob\-tained from $X$ by replacing each $x_i$ by $a_i$ copies of itself 
and let $\vek e:=(1,\ldots, 1)\in \N^m$.
 Ardila and Postnikov prove the following isomorphisms \cite[Proposition 6.3]{ardila-postnikov-2009}:
\begin{align}
 R^G_{(d, \mathfrak H \vek a)} &\cong \Pcal(X(\vek a), 1, \{X\})_d    
\\ R^G_{(d, \mathfrak H \vek a - \vek{e})} &\cong \Pcal(X(\vek a), 0, \{X\})_d    
\\ R^G_{(d, \mathfrak H \vek a - 2\vek{e})} &\cong \Pcal(X(\vek a), -1, \{X\})_d    
\end{align}
They prove those isomorphisms by showing a statement similar to the following lemma:
\begin{Lemma}
\sameterminologyasinmaindefinition
Let $\vek b\in \{0,1\}^\Hcal$
and let $J_{\vek b}:=\{ C\in \Lcal(X) :    b_H=1 \text{ for all } H\supseteq C  \}$, \ie
the maximal missing flats in $J_{\vek b}$ are exactly the hyperplanes that satisfy $b_H=0$.

Suppose that $\Ical(X(\vek a),k,J_\vek{b})= \Ical'(X(\vek a),k,J_\vek{b})$ for all $\vek a\in \N^N$. 
Then,%
\begin{align}
 R^G_{(d, \mathfrak H \vek a + (k-1)\vek{e} + \vek b)} &\cong (\ker \Ical(X,k,J_\vek{b}))_d   
\qquad \text{for all $d$}
\label{eq:CoxequalKernel}
\end{align}
\label{Prop:CoxequalKernel}
\end{Lemma}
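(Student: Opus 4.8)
The plan is to reduce \eqref{eq:CoxequalKernel} to the already-known isomorphism $R^G_{(d,\mathfrak H\vek a')} \cong \Ical_{d,\mathfrak H\vek a'}$ from \cite[Proposition 2.1]{sturmfels-xu-2010}, where $\Ical_\vek{u} = \ideal\{D_1^{u_1+1},\ldots,D_m^{u_m+1}\}$ and the $D_i$ are the defining normals of the hyperplanes $H_i$. The first step is to identify the right exponent vector: with $\vek u = \mathfrak H\vek a + (k-1)\vek e + \vek b$, the $i$-th exponent is $u_i + 1 = (\mathfrak H\vek a)_i + k + b_{H_i}$. Now $(\mathfrak H\vek a)_i = \sum_{j : H_i \not\ni x_j} a_j$ is precisely $m_{X(\vek a)}(H_i)$, the number of vectors of the blown-up configuration $X(\vek a)$ not perpendicular to $D_i$; and $b_{H_i} = \chi_{J_\vek{b}}(H_i)$ by the definition of $J_\vek b$ (a hyperplane $H$ lies in $J_\vek b$ iff $b_H = 1$, since $H$ is the only flat containing $H$ among those of rank $\ge r-1$). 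Hence $\Ical_\vek{u} = \ideal\{D_{E(H)}^{m_{X(\vek a)}(H) + k + \chi(H)} : H \in \Hcal(X)\}$, which is exactly $\Ical'(X(\vek a), k, J_\vek{b})$ (note that $\Lcal(X(\vek a)) \cong \Lcal(X)$ since replacing vectors by parallel copies does not change the matroid's lattice of flats beyond multiplicities, and the hyperplanes and their normals are the same).

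\textbf{Key steps.} So the chain of identifications is: $R^G_{(d, \mathfrak H \vek a + (k-1)\vek e + \vek b)} \cong (\ker \Ical'(X(\vek a), k, J_\vek b))_d$ by Sturmfels--Xu together with the exponent bookkeeping above; then $\ker \Ical'(X(\vek a), k, J_\vek b) = \ker \Ical(X(\vek a), k, J_\vek b)$ by the hypothesis $\Ical(X(\vek a),k,J_\vek b) = \Ical'(X(\vek a),k,J_\vek b)$; and finally $\ker \Ical(X(\vek a), k, J_\vek b) = \ker \Ical(X, k, J_\vek b)$ by Corollary~\ref{Thm:IndependenceRepresentation}, since $X(\vek a)$ and $X$ represent the \emph{same} matroid (parallel copies do not alter $\Mfrak(X)$ on the level of its lattice of flats, only multiplicities, but $m_X(C)$ and $\chi$ are defined via that matroid and the identity $m_{X(\vek a)}(C) = \sum_{j : x_j \notin C} a_j$ shows the ideals match up correctly under the deletion--contraction recursion that proves the Main Theorem). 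Actually the cleanest route for the last equality is: by the Main Theorem, $\ker\Ical(X(\vek a),k,J_\vek b) = \Pcal(X(\vek a),k,J_\vek b)$ and $\ker\Ical(X,k,J_\vek b) = \Pcal(X,k,J_\vek b)$, and Corollary~\ref{Thm:IndependenceRepresentation} gives that these have the same Hilbert series; but we need an actual isomorphism, so one uses that $\sym(U) = \sym(U)$ is the common ambient space and that $\Pcal(X,k,J_\vek b) \subseteq \Pcal(X(\vek a),k,J_\vek b)$ with equal dimension in each degree, forcing equality degree by degree.

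\textbf{Main obstacle.} The subtle point — and the step I expect to require the most care — is the passage from $X(\vek a)$ back to $X$. One must verify that $\ker\Ical(X(\vek a),k,J_\vek b)$ really equals $\ker\Ical(X,k,J_\vek b)$ \emph{as subspaces of $\sym(U)$}, not merely that their Hilbert series agree. The inclusion $\Pcal(X,k,J_\vek b) \subseteq \Pcal(X(\vek a),k,J_\vek b)$ is immediate from the generating sets in Definition~\ref{Definition:PIdefinition} (every spanning/independence condition for $X$ lifts to one for $X(\vek a)$ because the matroids coincide on $\Lcal$, and the degree bounds involve $\chi$ which only sees the flat $\clos(X\setminus Y)$, unchanged under duplication), and then equality follows from Corollary~\ref{Thm:IndependenceRepresentation}. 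One also needs that the upper set $J_\vek b \subseteq \Lcal(X(\vek a))$ corresponds, under the canonical isomorphism $\Lcal(X(\vek a)) \cong \Lcal(X)$, to the upper set $J_\vek b \subseteq \Lcal(X)$ — which holds because $J_\vek b$ is defined purely in terms of the hyperplanes and the vector $\vek b$, data shared by $X$ and $X(\vek a)$. Assembling these identifications and checking the grading matches on the nose (the $\Z^{N+1}$-grading of $R^G$ versus the natural grading of $\sym(U)$, as in \cite[Proposition 2.1]{sturmfels-xu-2010}) completes the proof.
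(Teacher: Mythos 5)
The first part of your argument is correct and matches the intended proof: you correctly translate the Sturmfels--Xu identification $R^G_{(d,\vek u)}\cong(\ker\Ical_\vek u)_d$ into $\Ical_\vek u=\Ical'(X(\vek a),k,J_\vek b,E)$ by checking that $(\mathfrak H\vek a)_i=m_{X(\vek a)}(H_i)$, that $b_{H_i}=\chi_{J_\vek b}(H_i)$, and that the maximal missing flats of $J_\vek b$ are hyperplanes, so that $\Ical'$ is generated precisely by the $D_{E(H_i)}^{u_i+1}$. Together with the hypothesis $\Ical'(X(\vek a),k,J_\vek b)=\Ical(X(\vek a),k,J_\vek b)$ this already yields
$R^G_{(d,\mathfrak H\vek a+(k-1)\vek e+\vek b)}\cong(\ker\Ical(X(\vek a),k,J_\vek b))_d$, which is the statement the paper in fact needs (and the paper itself gives no further proof, only the remark that Ardila--Postnikov prove the special cases $\vek b\in\{\vek 0,\vek e\}$ by essentially this identification).

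The final step of your argument, however, is a genuine error: you claim $\ker\Ical(X(\vek a),k,J_\vek b)=\ker\Ical(X,k,J_\vek b)$ on the grounds that $X$ and $X(\vek a)$ ``represent the same matroid.'' They do not. $\Mfrak(X)$ has $N$ elements while $\Mfrak(X(\vek a))$ has $\sum_i a_i$ elements; parallel copies change the matroid even though the lattice of flats is unchanged. In particular $m_{X(\vek a)}(C)\ne m_X(C)$ as soon as some $a_i\ge 2$, the exponents in the two power ideals differ, one has the strict containment $\Pcal(X,k,J_\vek b)\subsetneq\Pcal(X(\vek a),k,J_\vek b)$ in general, and Corollary~\ref{Thm:IndependenceRepresentation} is simply not applicable because it requires the same matroid. (For $X$ a basis of a two-dimensional space and $\vek a=(2,2)$, already the central $\Pcal$-spaces have dimensions $1$ and $4$.) What saves the situation is that the target formula \eqref{eq:CoxequalKernel} as printed contains a typo: the right-hand side should read $(\ker\Ical(X(\vek a),k,J_\vek b))_d$, as one sees by comparing with Proposition~\ref{Prop:SemiExternalZonotopalCoxMod} (the $k=0$ specialisation, which has $X(\vek a)$) and with the three Ardila--Postnikov isomorphisms displayed just above the lemma, and also from the simple observation that the left side depends on $\vek a$ while a right side involving only $X$ would not. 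So your proof is correct precisely up to the point where you start trying to remove $\vek a$; that last reduction should be deleted, not repaired.
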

Using the Main Theorem, we can deduce the following results about
\emph{hierarchical zonotopal Cox modules}:
\begin{Proposition}
We use the same terminology as in Lemma~\ref{Prop:CoxequalKernel}.
For the graded components of the \emph{semi-external zonotopal Cox module} $\Zcal(X, \mathfrak H \vek a -\vek{e} + \vek b)$, the following holds:
\begin{align}
 R^G_{(d, \mathfrak H\vek{a}  - \vek e + \vek{b})} \cong \Pcal(X(\vek{a}),0,J_\vek{b})_d 
\qquad \text{for all $d$}
\end{align}
\label{Prop:SemiExternalZonotopalCoxMod}
\end{Proposition}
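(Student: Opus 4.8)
The plan is to combine Lemma~\ref{Prop:CoxequalKernel} with the Main Theorem. The key point is that Lemma~\ref{Prop:CoxequalKernel} has the hypothesis that $\Ical(X(\vek a),k,J_\vek b)=\Ical'(X(\vek a),k,J_\vek b)$ for all $\vek a\in\N^N$, and we are in the case $k=0$; by the Main Theorem (more precisely, by Lemma~\ref{Lemma:IgleichIprime} together with the $k=0$ case of the Main Theorem, which asserts $\Ical=\Ical'$ for $k\in\{-1,0\}$), this equality holds automatically. So the hypothesis of the lemma is met for free.

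First I would observe that the hyperplanes of $\Lcal(X(\vek a))$ are in natural bijection with those of $\Lcal(X)$ (replacing a vector by several parallel copies changes neither the lattice of flats up to this identification nor, crucially, which hyperplanes contain which elements), so the vector $\vek b\in\{0,1\}^\Hcal$ determines an upper set $J_\vek b\subseteq\Lcal(X(\vek a))$ in exactly the same way, and $\chi_{J_\vek b}$ is compatible with the multiplicities. Then I would apply Lemma~\ref{Prop:CoxequalKernel} with $k=0$ to get $R^G_{(d,\mathfrak H\vek a - \vek e + \vek b)}\cong(\ker\Ical(X,0,J_\vek b))_d$ — wait, more carefully: the lemma as stated gives $R^G_{(d,\mathfrak H\vek a + (k-1)\vek e + \vek b)}\cong(\ker\Ical(X,k,J_\vek b))_d$, so with $k=0$ the left-hand shift is $\mathfrak H\vek a - \vek e + \vek b$, which is exactly the shift in the statement to be proved. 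Finally, the Main Theorem identifies $\ker\Ical(X(\vek a),0,J_\vek b)$ with $\Pcal(X(\vek a),0,J_\vek b)$, and the graded component in degree $d$ is $\Pcal(X(\vek a),0,J_\vek b)_d$. Stringing these isomorphisms together yields the claim.

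\textbf{Main obstacle.} I expect the only genuine subtlety is the bookkeeping needed to see that Lemma~\ref{Prop:CoxequalKernel} is really being applied to $X(\vek a)$ rather than $X$, i.e.\ checking that $J_\vek b$ (defined via hyperplanes of $X$) pulls back to the analogously-defined upper set in $\Lcal(X(\vek a))$ and that the multiplicity function $m_{X(\vek a)}$ and the indicator $\chi_{J_\vek b}$ behave as the lemma requires. This is routine once one notes that passing from $X$ to $X(\vek a)$ only rescales the multiplicities $m(\cdot)$ additively by the copies in each flat's complement and does not alter the combinatorial type of the lattice of flats. Everything else is a direct substitution of $k=0$ into results already proved; in particular, the verification of the hypothesis of Lemma~\ref{Prop:CoxequalKernel} (that $\Ical=\Ical'$) is immediate from the $k\in\{-1,0\}$ part of the Main Theorem applied to each $X(\vek a)$.
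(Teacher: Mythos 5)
Your proof is correct and takes essentially the same route as the paper, which states Proposition~\ref{Prop:SemiExternalZonotopalCoxMod} without further argument as a direct consequence of Lemma~\ref{Prop:CoxequalKernel} and the Main Theorem: plug in $k=0$, note that the hypothesis $\Ical(X(\vek a),0,J_{\vek b})=\Ical'(X(\vek a),0,J_{\vek b})$ holds by the $k\in\{-1,0\}$ part of the Main Theorem applied to $X(\vek a)$ (together with the paper's remark following Lemma~\ref{Lemma:IgleichIprime}), and then replace the kernel by $\Pcal(X(\vek a),0,J_{\vek b})$. You have also correctly noticed and compensated for the fact that the right-hand side of the display in Lemma~\ref{Prop:CoxequalKernel} should read $\ker\Ical(X(\vek a),k,J_{\vek b})$ rather than $\ker\Ical(X,k,J_{\vek b})$.
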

\begin{Proposition}
We use the same terminology as in
Lemma~\ref{Prop:CoxequalKernel}.
Let $C_0\in \Lcal(X)$ be a fixed flat and $J_{C_0}:=\{ C\in \Lcal(X) : C\supseteq C_0\}$ (cf. Subsection
\ref{subsect:semiIntHilb}).
If 
$\vek b\in \{0,1\}^{\Hcal}$ satisfies $b_H=1$ if and only if $H\supseteq C_0$, then
for the graded components of the \emph{semi-internal zonotopal Cox module} $\Zcal(X, \mathfrak H \vek a - 2\vek{e} + \vek b))$, the following holds:
\begin{align}
 R^G_{(d, \mathfrak H\vek{a} - 2\vek{e} + \vek{b} )} \cong \ker \Ical(X(\vek{a}),-1,J_{C_0})_d 
\qquad \text{for all $d$}
\end{align}
\label{Prop:SemiInternalZonotopalCoxMod}
\end{Proposition}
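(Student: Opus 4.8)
The statement to be proved is Proposition~\ref{Prop:SemiInternalZonotopalCoxMod}: with $\vek b$ chosen so that $b_H = 1$ exactly when $H \supseteq C_0$, the graded component $R^G_{(d,\mathfrak H\vek a - 2\vek e + \vek b)}$ is isomorphic to $\ker\Ical(X(\vek a),-1,J_{C_0})_d$. The plan is to deduce this from Lemma~\ref{Prop:CoxequalKernel} applied with $k=-1$ and $\vek b$ as in the hypothesis. Concretely, one checks that $(k-1)\vek e + \vek b = -2\vek e + \vek b$ when $k=-1$, so the multidegree appearing in \eqref{eq:CoxequalKernel} is exactly $\mathfrak H\vek a - 2\vek e + \vek b$, matching the one in the statement. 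Moreover, with $\vek b$ chosen this way, the upper set $J_{\vek b}$ from Lemma~\ref{Prop:CoxequalKernel} has as its maximal missing flats precisely the hyperplanes $H$ with $b_H = 0$, i.e.\ the hyperplanes $H \not\supseteq C_0$; one then verifies that $J_{\vek b} = J_{C_0} = \{C \in \Lcal(X) : C \supseteq C_0\}$. (A flat $C$ satisfies $b_H = 1$ for all $H \supseteq C$ iff every hyperplane containing $C$ contains $C_0$, which — using that every flat is an intersection of hyperplanes — is equivalent to $C \supseteq C_0$.) Hence the kernel on the right-hand side of \eqref{eq:CoxequalKernel} is $\ker\Ical(X(\vek a),-1,J_{C_0})$, as desired.

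The one nontrivial hypothesis of Lemma~\ref{Prop:CoxequalKernel} that must be discharged is that $\Ical(X(\vek a),-1,J_{\vek b}) = \Ical'(X(\vek a),-1,J_{\vek b})$ for all $\vek a \in \N^N$. For $k = -1$ this is not automatic from Lemma~\ref{Lemma:IgleichIprime} unless $J_{\vek b} \supseteq \Hcal$, which fails here in general (the hyperplanes not containing $C_0$ are missing). Instead I would invoke the discussion in Subsection~\ref{subsect:semiIntHilb}: since all maximal missing flats of $J_{C_0}$ are hyperplanes — and hyperplanes have unique defining normals up to scaling — the generating set of $\Ical'(X,-1,J_{C_0})$ is canonical, and moreover Holtz--Ron--Xu's identification $\ker\Ical(X,-1,J_{C_0}) = \ker\Ical'(X,-1,J_{C_0}) = \bigcap_{x\in C_0}\Pcal(X\setminus x,0,\{X\setminus x\})$ (quoted in the excerpt) shows the two ideals have the same kernel; combined with the fact that for $k \le 0$ one always has $\Ical \subseteq \Ical'$ reversed, i.e.\ $\Ical' \subseteq \Ical$ trivially and $\Ical \subseteq \Ical'$ by the kernel comparison plus finite codimension, one obtains equality. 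The same reasoning applies verbatim to $X(\vek a)$ in place of $X$, because the lattice of flats of $X(\vek a)$ is canonically isomorphic to that of $X$ (replicating vectors does not change the matroid), so $J_{C_0}$ for $X(\vek a)$ again has only hyperplanes as maximal missing flats.

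With the hypothesis of Lemma~\ref{Prop:CoxequalKernel} verified and the upper set identified, the proposition follows directly by specialization. I would also double-check the bookkeeping on the shift vectors: Lemma~\ref{Prop:CoxequalKernel} produces $R^G_{(d,\mathfrak H\vek a + (k-1)\vek e + \vek b)} \cong (\ker\Ical(X,k,J_{\vek b}))_d$, and here one must be careful that the lemma is stated for $X$ but we want the conclusion for $X(\vek a)$ with varying $\vek a$ — this is exactly the form in which Ardila--Postnikov's original argument runs (their Proposition~6.3, as recalled just before Lemma~\ref{Prop:CoxequalKernel}), where $\mathfrak H\vek a$ ranges over a cone and the replication $X(\vek a)$ absorbs the coefficient vector $\vek a$. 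So the final step is to note that $\mathfrak H$ is the non-containment hyperplane-vector matrix, $\mathfrak H\vek a$ is the vector of multiplicities $m_{X(\vek a)}(H)$ over hyperplanes $H$, and the exponent $m(H) - 1 + \chi_{J_{C_0}}(H)$ in the generators of $\Ical(X(\vek a),-1,J_{C_0})$ matches the component-wise description of $\mathfrak H\vek a - 2\vek e + \vek b$, giving the isomorphism of graded components for all $d$.

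\textbf{Main obstacle.} The only real work is establishing $\Ical = \Ical'$ in the semi-internal case $k=-1$ with $J = J_{C_0}$, since the general tool (Lemma~\ref{Lemma:IgleichIprime}) does not cover upper sets that omit hyperplanes; the argument has to go through the special structure — maximal missing flats are hyperplanes with unique normals — and the Holtz--Ron--Xu kernel identity. Everything else is a matter of matching multidegrees and recognizing $J_{\vek b} = J_{C_0}$.
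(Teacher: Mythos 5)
Your proposal is correct in its overall plan and reaches the right conclusion, but it contains a misreading of Lemma~\ref{Lemma:IgleichIprime} that leads you to do unnecessary extra work. You assert that ``for $k=-1$ this is not automatic from Lemma~\ref{Lemma:IgleichIprime} unless $J_{\vek b} \supseteq \Hcal$.'' That hypothesis belongs to the Main Theorem for $k=-1$ (to get $\Pcal = \ker\Ical$), not to Lemma~\ref{Lemma:IgleichIprime}, which is stated for \emph{arbitrary} upper sets $J$. The paper even says this explicitly right before the lemma: it ``implies $\Ical(X,k,J)=\Ical'(X,k,J,E)$ for $k\le 0$, using the Main Theorem for $k=0$ as base case.'' Since the Main Theorem for $k=0$ gives $\Ical'=\Ical$ with no restriction on $J$, Lemma~\ref{Lemma:IgleichIprime} then gives $\Ical(X(\vek a),-1,J)=\Ical'(X(\vek a),-1,J,E)$ for every upper set $J$ (in particular $J_{C_0}$) and every $\vek a$. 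So the hypothesis of Lemma~\ref{Prop:CoxequalKernel} is immediate, and the whole second paragraph of your proof --- the detour through Holtz--Ron--Xu's kernel identity plus a finite-codimension Hilbert-series comparison --- is unnecessary.

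That said, your detour is a valid alternative argument: $\Ical'\subseteq\Ical$ trivially, equality of kernels (quoted from HRX) forces equal Hilbert series of the quotients, and then the natural surjection $\sym(V)/\Ical'\twoheadrightarrow\sym(V)/\Ical$ must be an isomorphism. The rest of the proof --- the multidegree bookkeeping $(k-1)\vek e + \vek b = -2\vek e + \vek b$ at $k=-1$, the identification $J_{\vek b} = J_{C_0}$ via the fact that every flat is an intersection of hyperplanes, and the observation that $X(\vek a)$ has the same lattice of flats as $X$ --- is correct and is exactly what one needs to specialize Lemma~\ref{Prop:CoxequalKernel}.
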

Using Theorems~\ref{Th:HilbTutteTypeExpression} and \ref{Thm:SemiInternalHilbertSeries}, we can calculate the multigraded Hilbert series 
of the semi-external and the semi-internal zonotopal Cox modules: %
\begin{Corollary}
 In the setting of Proposition~\ref{Prop:SemiExternalZonotopalCoxMod}, the 
 dimension of $R^G_{(d, \mathfrak H\vek{a} - \vek e + \vek{b})}$
 equals the coefficient of $t^d$ in
\begin{align*} 
 \hilb(\Pcal(X(\vek{a}),0,J_{\vek b}),t ) 
 &=
 t^{\abs{\vek a} - r} \!\!\sum_{\substack{A\subseteq X\\ \chi(A)=1}} t^{r-\rank(A)} \!\!\!\!\!\!
  \sum_{\substack{1 \le s_i \le a_i \\ \vek s\in \N^A\!,\, x_i\in A}} \!\!\!
 \left(\prod_i \binom{a_i}{s_i}\right)  \left(\frac 1t - 1\right)^{\abs{\vek s} - \rank(A) }  
\end{align*}
where $\abs{\vek a}:=\sum_i a_i$.
\end{Corollary}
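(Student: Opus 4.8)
The plan is to read off $\dim R^G_{(d,\mathfrak H\vek a-\vek e+\vek b)}$ from Proposition~\ref{Prop:SemiExternalZonotopalCoxMod}, to evaluate the resulting Hilbert series by applying Theorem~\ref{Th:HilbTutteTypeExpression} to the enlarged sequence $X(\vek a)$, and then to rewrite the subset expansion over $X(\vek a)$ as a multiplicity-weighted sum over subsets of $X$. First I would observe that in the semi-external case $k=0$ the hypothesis $\Ical(X(\vek a),0,J_{\vek b})=\Ical'(X(\vek a),0,J_{\vek b})$ of Lemma~\ref{Prop:CoxequalKernel} is satisfied, since $\Ical=\Ical'$ holds for $k\le 0$ (Lemma~\ref{Lemma:IgleichIprime}). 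Hence Proposition~\ref{Prop:SemiExternalZonotopalCoxMod} applies and yields $R^G_{(d,\mathfrak H\vek a-\vek e+\vek b)}\cong\Pcal(X(\vek a),0,J_{\vek b})_d$, so the dimension in question equals the coefficient of $t^d$ in $\hilb(\Pcal(X(\vek a),0,J_{\vek b}),t)$.

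Next I would apply Theorem~\ref{Th:HilbTutteTypeExpression} with $X(\vek a)$ in place of $X$ and $J_{\vek b}$ (viewed as an upper set in $\Lcal(X(\vek a))$) in place of $J$. Since $X(\vek a)$ spans $U$ it has rank $r$, and it has $\abs{\vek a}=\sum_i a_i$ elements, so
\[
 \hilb(\Pcal(X(\vek a),0,J_{\vek b}),t)=t^{\abs{\vek a}-r}\sum_{\substack{B\subseteq X(\vek a)\\ \chi(B)=1}} t^{r-\rank(B)}\left(\frac 1t-1\right)^{\abs{B}-\rank(B)}.
\]
The remaining step is bookkeeping. A subsequence $B\subseteq X(\vek a)$ picks, for each $i\in[N]$, some of the $a_i$ copies of $x_i$; writing $s_i$ for the number picked and $A:=\{x_i:s_i\ge 1\}\subseteq X$, there are exactly $\prod_{x_i\in A}\binom{a_i}{s_i}$ subsequences realizing a given pair $(A,\vek s)$ with $\vek s\in\N^A$ and $1\le s_i\le a_i$. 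All such $B$ satisfy $\rank(B)=\rank_{\Mfrak(X)}(A)$ (parallel copies do not change the rank) and $\abs{B}=\abs{\vek s}=\sum_{x_i\in A}s_i$, and under the canonical lattice isomorphism $\Lcal(X(\vek a))\cong\Lcal(X)$ — which matches hyperplanes and therefore respects the definition of $J_{\vek b}$ — the flat $\clos_{X(\vek a)}(B)$ corresponds to $\clos_X(A)$, so $\chi(B)=\chi(A)$. Substituting and grouping the sum by $(A,\vek s)$ produces exactly the claimed formula (with $\abs{\vek a}$ in the role of $N$); combining with the first paragraph finishes the proof.

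The argument is essentially routine, and the only point requiring care is this combinatorial identification: making the bijection between $\{B\subseteq X(\vek a)\}$ and pairs $(A,\vek s)$ precise, and verifying that closure-membership in $J_{\vek b}$ transfers correctly between $\Mfrak(X(\vek a))$ and $\Mfrak(X)$. One should also fix the convention that $\vek a$ has positive entries so that $X(\vek a)$ spans $U$; for a general $\vek a\in\N^N$ one first restricts $X$ to the support of $\vek a$ and reads $r$ in the prefactor as $\rank\Mfrak(X(\vek a))$. No ingredient beyond Theorem~\ref{Th:HilbTutteTypeExpression} and Proposition~\ref{Prop:SemiExternalZonotopalCoxMod} is needed.
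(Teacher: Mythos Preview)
Your proposal is correct and follows essentially the same approach as the paper: apply Theorem~\ref{Th:HilbTutteTypeExpression} to $X(\vek a)$ and then group the subsets $B\subseteq X(\vek a)$ according to the pair $(A,\vek s)$ recording which vectors appear and with what multiplicity. Your write-up adds a few extra justifications (the hypothesis check for Lemma~\ref{Prop:CoxequalKernel}, the lattice isomorphism $\Lcal(X(\vek a))\cong\Lcal(X)$, the caveat about zero entries of $\vek a$) that the paper leaves implicit, but the core argument is identical.
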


\begin{proof}
Apply  Theorem~\ref{Th:HilbTutteTypeExpression} to $X(\vek a)$.
Take into account that for every $S\subseteq X(\vek a)$, there is a unique pair $(A, \vek s)$ with $A\subseteq X$ and $\vek s\in \N^A$ \st $S$ is obtained from $A$ by
replacing each $x_i\in A$ by $s_i$ copies of itself. Furthermore, $\rank(S)=\rank(A)$. 
For a fixed pair $(A,\vek s)$, there are $\binom{a_i}{s_i}$ options to choose the corresponding vectors in $X(\vek a)$ for every $i$.
\end{proof}

\begin{Corollary}
 In the setting of Proposition~\ref{Prop:SemiInternalZonotopalCoxMod}, the 
 dimension of $R^G_{(d, \mathfrak H\vek{a} - 2\vek{e} + \vek{b})}$
 equals the coefficient of $t^d$ in
\begin{align} 
 \hilb(\ker \Ical(X(\vek{a}),-1,J_{C_0}),t ) 
 &=
 \sum_{B \in \BB_-(X,J_{C_0})} \sum_{\substack{0\le s_i \le a_i-1\\\vek s\in \N^B}} t^{e(B,\vek s)} %
\end{align}
where $e(B,\vek s):={\sum_{i\, :\, x_i\not\in E(B)} a_i 
  - r - \sum_{x_i\in B} s_i}$
\end{Corollary}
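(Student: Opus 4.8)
The plan is to apply Theorem~\ref{Thm:SemiInternalHilbertSeries} to the sequence $X(\vek a)$ rather than to $X$, in exact parallel to the way the preceding corollary applies Theorem~\ref{Th:HilbTutteTypeExpression} to $X(\vek a)$. By Proposition~\ref{Prop:SemiInternalZonotopalCoxMod}, the dimension of $R^G_{(d, \mathfrak H\vek{a} - 2\vek{e} + \vek{b})}$ is the coefficient of $t^d$ in $\hilb(\ker\Ical(X(\vek a),-1,J_{C_0}),t)$, so it suffices to expand the right-hand side of Theorem~\ref{Thm:SemiInternalHilbertSeries} for $X(\vek a)$ and repackage the sum over $\BB_-(X(\vek a),J_{C_0})$ as a sum over $\BB_-(X,J_{C_0})$ together with a multiplicity vector $\vek s$.

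First I would record the combinatorial bookkeeping for multiplicities: for every subset $S\subseteq X(\vek a)$ there is a unique pair $(A,\vek s)$ with $A\subseteq X$, $\vek s\in\N^A$, $1\le s_i\le a_i$, obtained by recording which original vectors appear and with what multiplicity, and one has $\rank(S)=\rank(A)$ with $\binom{a_i}{s_i}$ choices of representatives for each $i$. For the present corollary the relevant subsets are bases, so I would specialize this: a basis $B'\in\BB(X(\vek a))$ is the same data as a basis $B\in\BB(X)$ together with a choice, for each $x_i\in B$, of one of the $a_i$ copies; since each such copy is selected exactly once, no binomial factors survive and the sum over $\BB(X(\vek a))$ becomes a sum over $B\in\BB(X)$ times an (empty-looking) sum over which copy is chosen — but crucially the \emph{activity} statistics and the set $\BB_-$ must be tracked through the parallel copies.

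The main obstacle is exactly this last point: I must check that, under a suitable order on $X(\vek a)$ extending the order on $X$ (e.g.\ the $a_i$ copies of $x_i$ placed consecutively), a basis $B'$ of $X(\vek a)$ lies in $\BB_-(X(\vek a),J_{C_0})$ if and only if its underlying basis $B$ lies in $\BB_-(X,J_{C_0})$, and that the externally active set transforms predictably. Concretely: the copies of $x_i$ \emph{not} chosen into $B'$ are all parallel to the chosen copy, hence lie in the closure of $B'$, so they are externally active whenever the chosen copy is; moreover for $x_i\notin B$ all $a_i$ copies are externally active in $B'$. Thus $X(\vek a)\setminus E(B')$ consists precisely of $\sum_{i\,:\,x_i\in E(B)}\!(a_i)$ minus the chosen ones — I would verify that $|X(\vek a)| - r - |E(B')| = \sum_{i\,:\,x_i\notin E(B)} a_i - r - \sum_{x_i\in B} s_i$, which is the claimed exponent $e(B,\vek s)$; here $\vek s\in\N^B$ with $0\le s_i\le a_i-1$ indexes which copy (among the $a_i$) is chosen, offset by one so that $s_i$ counts copies appearing \emph{before} the chosen one. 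I would also confirm that $B\setminus I(B)$ and $B'\setminus I(B')$ determine the same flat in the respective lattices of flats (parallel copies do not change closures), so that the condition $\chi(B'\setminus I(B'))=1$ defining $\BB_-(X(\vek a),J_{C_0})$ reduces to $\chi(B\setminus I(B))=1$ and hence to $B\in\BB_-(X,J_{C_0})$; this uses that $J_{C_0}$ is pulled back from $\Lcal(X)$ to $\Lcal(X(\vek a))$ compatibly. Assembling these identifications, the sum over $\BB_-(X(\vek a),J_{C_0})$ of $t^{|X(\vek a)|-r-|E(B')|}$ becomes $\sum_{B\in\BB_-(X,J_{C_0})}\sum_{0\le s_i\le a_i-1,\ \vek s\in\N^B} t^{e(B,\vek s)}$, which is the assertion.
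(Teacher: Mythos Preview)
Your overall strategy is the paper's: apply Theorem~\ref{Thm:SemiInternalHilbertSeries} to $X(\vek a)$ under an order compatible with the order on $X$, parametrise bases of $X(\vek a)$ by pairs $(B,\vek s)$, and verify $|X(\vek a)|-r-|E(B')|=e(B,\vek s)$. Your account of external activity is a bit garbled (a copy of $x_i\notin B$ is externally active iff $x_i\in E(B)$, not unconditionally; and the ``chosen copy'' of $x_i\in B$ lies in $B'$, so is never itself externally active), but the exponent identity you target is correct provided $s_i$ is read as the number of copies \emph{greater} than the chosen one, matching the paper's convention ``the $i$th copy, the maximal one being the first.''

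The genuine gap is the assertion that $\clos(B'\setminus I(B'))$ and $\clos(B\setminus I(B))$ determine the same flat. They need not. If $b\in I(B)$ and the chosen copy $b'$ is \emph{not} the maximal copy of $b$, then $b'\notin I(B')$: the maximum of the fundamental cocircuit $X(\vek a)\setminus\clos(B'\setminus b')$ is the top copy of $b$, not $b'$. Hence $b'\in B'\setminus I(B')$, and $\clos(B'\setminus I(B'))$ can be strictly larger than the span of copies of $B\setminus I(B)$. In particular $B'$ may belong to $\BB_-(X(\vek a),J_{C_0})$ while $B\notin\BB_-(X,J_{C_0})$, so the reindexing of the sum over $\BB_-(X(\vek a),J_{C_0})$ as a sum over $\BB_-(X,J_{C_0})$ times all $\vek s$ breaks down.

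Concretely, take $X=(x_1,x_2,x_3)$ in general position in dimension~$2$, $C_0=\{x_1\}$, order $x_2<x_3<x_1$, and $\vek a=(2,1,1)$. Then $\BB_-(X,J_{C_0})=\{(x_2,x_3)\}$, so the right-hand side of the displayed formula equals $t^0$. But $\Ical(X(\vek a),-1,J_{C_0})=\ideal\{D_{\eta_1}^2,D_{\eta_2}^2,D_{\eta_3}^2\}$ has $\hilb(\ker\Ical(X(\vek a),-1,J_{C_0}),t)=1+2t$: with the compatible order $x_2<x_3<x_1^{(1)}<x_1^{(2)}$, the bases $(x_1^{(1)},x_2)$ and $(x_1^{(1)},x_3)$ have $I(B')=\emptyset$ and lie in $\BB_-(X(\vek a),J_{C_0})$, although their underlying bases $(x_1,x_2)$ and $(x_1,x_3)$ do not lie in $\BB_-(X,J_{C_0})$. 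The paper's own proof also skips this point entirely; it verifies the exponent $e(B,\vek s)$ but never argues why the outer sum may be restricted to $\BB_-(X,J_{C_0})$.
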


\begin{proof}
 Apply Theorem~\ref{Thm:SemiInternalHilbertSeries}. 
 Choose an order on $X(\vek a)$ that is compatible with the order on $X$, \ie
 if $x',y'\in X(\vek a)$ are copies of $x,y\in X$ ($x\neq y$) then $x'<y'$ if and only if $x<y$.
Fix a basis $B\subseteq X$. 
Now we examine the copies of $B$ that are contained in $X(\vek a)$.
All copies of the elements that are externally active with respect to $B$ in $X$ are externally active with respect to every copy of $B$ in $X(\vek a)$. Let $x\in B\subseteq X$. 
If the $i$th copy (the maximal one being the first) of $x$ in $X(\vek a)$ is chosen, 
$i-1$ copies of $x$ are externally active in $X(\vek a)$. Hence, 
for the copy of $B$ in $X(\vek a)$ that corresponds to $\vek s$, the exponent of $t$ equals  
$e(B,\vek s)$.
\end{proof}

\section{Examples}
\label{section:examples}
This section contains a large number of examples. In the first subsection, 
we give explicit examples for the various structures appearing in this paper
($X$, $J$, $S$, $\Pcal$, $\Ical$, $\Gamma$, $\Bcal$, $\BB$, $\BB_-$).
In the second subsection, we give an example for deletion and contraction as defined in Section~\ref{subsection:delcon}. 
In the third subsection, we exemplify the 
decomposition of $\Pcal$-spaces that appears in the proof of %
Theorem~\ref{Thm:HilbertRecursion}.
In the last subsection, we explain several problems that occur in the semi-internal case.%

In this section, we make the following identifications: $\sym(V)=\sym(U)=\K[x,y]$ respectively $\sym(V)=\sym(U)=\K[x,y,z]$.

\subsection{Structures}
\begin{align}
\text{Let } 
X_1 &:= \begin{bmatrix}
    1 & 0 & 1 \\
    0 & 1 & 1
 \end{bmatrix} = (x_1,x_2,x_3). 
\end{align}
Define two ideals $J_1:=\{X_1\}$ and $J_2:=\{X_1, (x_1), (x_3) \}$. The set of bases is
$\BB(X_1)=\{(x_1x_2), (x_1x_3), (x_2x_3) \}$. The sets of semi-internal bases are $\BB_-(X_1,J_1)=\{ (x_1x_2) \}$ and
$\BB_-(X_1,J_2)=\{ (x_1x_2), (x_1x_3) \}$.

\begin{align*}
  S(X_1,-1, J_1) &= \{ 1  \} 
& S(X_1,0, J_1) &= \{ 1,\, p_{x_1},\, p_{x_2},\, p_{x_3}  \}  \\
  \Pcal(X_1,-1,J_1) &= \spa\{1\} 
& \Pcal(X_1, 0, J_1) &= \spa\{ 1, x, y \} \\
  \Ical(X_1,-1,J_1) &= \ideal \{ x, y  \} 
& \Ical(X_1,0,J_1) &= \ideal \{ x^2, xy, y^2  \} \\
&& \Gamma(X_1,0,J_1) &= \{ ((x_1x_2),\emptyset,0),((x_1x_3),\emptyset,0), \nonumber\\
        &&& \qquad ((x_2x_3),\emptyset,0)  \} \\
&& \Bcal(X_1,0,J_1) &= \{ p_\emptyset, p_{x_2}, p_{x_1}   \}   \displaybreak[2]\\[1mm]
 S(X_1,-1, J_2) &= \{ 1, p_{x_2}  \} 
& S(X_1,0, J_2) &= \{ 1,\, p_{x_1},\, p_{x_2},\, p_{x_3},\, p_{x_1x_2},\, p_{x_2x_3}   \}  \\
 \Pcal(X_1,-1,J_2) &= \spa\{1,y\} 
& \Pcal(X, 0, J_2) &= \spa \{ 1, x, y, xy, y^2 \} \\
 \Ical(X_1,-1, J_2) &= \ideal \{ x, y^2  \} 
& \Ical(X_1,0,J_2) &= \ideal \{ x^2, xy^2, y^3  \} \\
&& \Gamma(X_1,0,J_2) &= \{ ((x_1x_2),\emptyset,0),((x_1x_3),\emptyset,0), \nonumber\\
		&&&\qquad ((x_1x_3),(x_3),0),((x_2x_3),\emptyset,0), \nonumber\\
                &&&\qquad  ((x_2x_3),(x_2), \emptyset,0)  \} \\
&& \Bcal(X_1,0,J_2) &= \{  p_\emptyset,\, p_{x_2},\, p_{x_2x_3},\, p_{x_1},\, p_{x_1x_2}    \}
\end{align*}

\subsection{Deletion and contraction}
\label{subsect:delconEx}
In this subsection, we give examples explaining  deletion and contraction for pairs $(X,J)$.
\begin{align}
 X_1\setminus x_1 &= \begin{bmatrix}
                           0 & 1 \\ 1 & 1 
                          \end{bmatrix} = (x_2,x_3) &
 X_1 /x_1 &= \begin{bmatrix}
                         1 &  1                         
                    \end{bmatrix} = (\bar x_2,\bar x_3) \\
 J_1 \setminus x_1 &= \{  (x_2,x_3 )  \} 
& J_1 / x_1 &= \{  (\bar x_2, \bar x_3 )  \} 
\\
 J_2 \setminus x_1 &= \{ (x_2, x_3), (x_3) \} 
 &  J_2 / x_1 &= \{ (\bar x_2,\bar x_3), \bar\emptyset \} =\Lcal(X_1/x_1) 
\end{align}
Recall that we identify $\K[x,y]/x$ and $\K[y]$. Then,

\begin{align*}
 \Ical(X_1\setminus x_1, 0, J_1\setminus x_1)&= \ideal\{ x, y \} 
& \Ical(X_1\setminus x_1, 0, J_2\setminus x_1)&= \ideal\{ x,y^2 \} \\
 \Pcal(X_1\setminus x_1, 0, J_1\setminus x_1)&= \spa \{1 \} 
& \Pcal(X_1\setminus x_1, 0, J_2\setminus x_1)&= \spa \{1, y \} \\
 \Ical(X_1/ x_1, 0, J_1/x_1)&= \ideal\{ y^2 \}
& \Ical(X_1/ x_1, 0, J_2/x_1)&= \ideal \{ y^3 \} \\ %
 \Pcal(X_1/ x_1, 0, J_1/x_1)&= \spa\{ 1,y \} 
& \Pcal(X_1/ x_1, 0, J_2/x_1)&= \spa\{ 1,y,y^2 \} \\
\end{align*}
The reader should check that
$\Pcal(X_1,0,J_i)= p_x \Pcal(X_1\setminus x_1, 0, J_i\setminus x_1) \oplus \Pcal(X_1/ x_1, 0, J_i / x_1)$ %
holds for $i=1$ and $i=2$.

%

\subsection{Recursion for the Hilbert series}
\begin{Example}
\label{Example:coloopRecursion}
This is an example for 
the description of $\Pcal$-spaces in  Proposition~\ref{Prop:PspaceGeneralPosition} and for 
the decomposition of $\Pcal$-spaces that appears in the proof of Theorem~\ref{Thm:HilbertRecursion}:
 \begin{align}
 X_2 &:= %
 			  \begin{bmatrix}
                           1 & 0 \\ 0 & 1 
                          \end{bmatrix} = (x_1,x_2) %
 \qquad J_3 := \{X_2\} \qquad J_4 := \{ X_2, ( x_1)  \} \\
 \Ical(X_2, 2,J_3) &= \ideal \{ x^3 , y^3, x^2y^2    \} \qquad \widehat{J_3/x_2} = \widehat{J_4/x_2}= \{(x_1)\}\\
 \Pcal(X_2, 2,J_3) &= \spa \{ 1,x,y,\; x^2,  xy, y^2,\; x^2y, xy^2  \} \\
  &= \spa \{ 1,x,x^2\} \oplus y\spa\{1, x, x^2\} \oplus y^2\spa\{ 1,x\}   \\
 \Ical(X_2, 2,J_4) &= \ideal \{ x^3 , y^4,  x^2y^2, xy^3   \} \\
 \Pcal(X_2, 2,J_4) &= \spa \{ 1,x,y,\; x^2,  xy, y^2,\; x^2y, xy^2, y^3  \} \\
  &= \spa\{1, x, x^2   \} \oplus y\spa\{1,x,x^2  \} \oplus y^2 \spa\{1,x   \} \oplus y^3 \spa\{1 \} \nonumber
 \end{align}
For a graphical description of the %
 decomposition, see Figure~\ref{fig:coloopRecursion}.
\end{Example}

\begin{figure}[tb]
 \centering
 \input{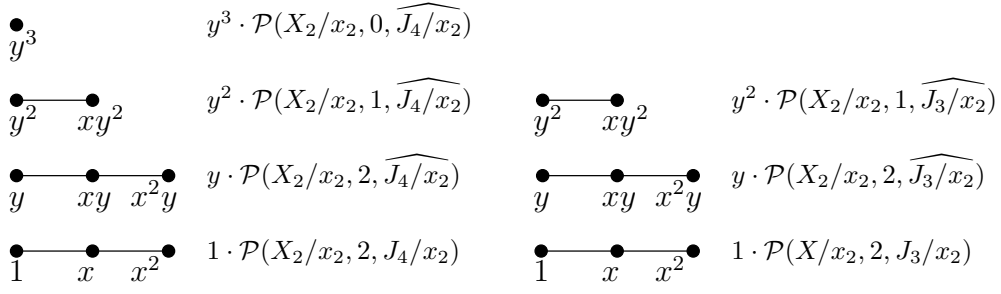}
 \caption{On the left, $\Pcal(X_2,2,J_4)$ and on the right $\Pcal(X_2,2,J_3)$. For both spaces, the decompositions corresponding to Theorem~\ref{Thm:HilbertRecursion} are shown.}
 \label{fig:coloopRecursion}
\end{figure}

\subsection{Problems in the semi-internal case}

\begin{Example}[No canonical basis for internal spaces]
\label{Example:NoCanonicalInternalBasis}
In Section~\ref{subsect:semiIntHilb}, we defined the set of semi-internal bases $\BB_-(X,J)$.
Let $\tilde\Bcal_-(X,J):=\{ p_{X\setminus (B\cup E(B))} : B\in \BB_-(X,J) \}$. This example 
shows that even in the internal case,
where $\tilde\Bcal_-(X,J)$ has the right cardinality, it is in general not contained in
 $\ker \Ical(X,-1,J)$.
 \begin{align}
   X_3 := \begin{bmatrix}
 0& 0& 1& 1& 1\\
 1& 0& 0& 0& 1\\
 0& 1& 1& 0& 0
        \end{bmatrix} &= (x_1,x_2,x_3,x_4,x_5) \\
 \BB_-(X_3,\{X_3\}) &= \{ (x_1,x_2,x_3), (x_1,x_2,x_4)   \} \displaybreak[1]\\
 \Ical(X_3,-1,\{X_3\}) &= \ideal \{ x^2,y,z   \} \displaybreak[1]\\
 \Pcal(X_3,-1,\{X_3\}) =\ker\Ical(X_3,-1,\{X_3\}) &= \spa\{ 1, x  \} \displaybreak[1]\\
 E((x_1,x_2,x_3)) = (x_4,x_5) \qquad E( (x_1,x_2,x_4) ) &= (x_5) \\
  \tilde\Bcal_-(X_3,\{X_3\}) &= \{  1, x+z \} \not \subseteq \spa\{1,x\}%
  \end{align}
\end{Example}

\begin{Example}
\label{example:InternalAdditionalCondition}
This example shows why there is an additional condition on the ideal $J$ in the Main Theorem 
for $k=-1$.
We use the matrix $X_1$ defined at the beginning of this section and the ideal $J_5:=\{X_1,\{x_1\}\}$, \ie $J \not\supseteq \Hcal$.
Then $J_5\setminus x_1 = \{X_1\setminus x_1\}$ and $J_5/x_1 = \{ X_1/x_1, \bar\emptyset \}$.  
This implies $S(X_1\setminus x_1,-1,J_5\setminus x_1)=\emptyset$, $S(X_1,-1,J_5)=\{1\}$ and $S(X_1/x_1,-1,J_5/x_1) = \{1,y\}$.
Hence, the map $\sym(\pi_{x_1}) : \Pcal(X_5,J)\to \Pcal(X_5/x_1,J_5/x_1)$ is not surjective.

The three $S$-sets appearing in this example span the corresponding kernels.
However, our proof of the Main Theorem fails here, since
$\ker \Ical(X_1,-1,J_5)\neq p_{x_1} \ker\Ical(X_1\setminus x_1, k, J_5 \setminus x_1) \oplus \ker\Ical(X_1/x_1,k,J_5/x_1)$, \ie Proposition~\ref{Prop:ExactSequence} does not hold.
%
\end{Example}

\begin{Example}
\label{Example:InternalRecursion}
This example shows why our proof of the Main Theorem
in general does not work in the case $J=\{X\}$ 
(cf. Remark~\ref{Remark:MainTheoremNotInternal}). 
It demonstrates that $\sym(\pi_{x}) : S(X,-1,\{X\}) \to S(X / x,-1, \{ X / x \})$
is in general not surjective.

Consider the following matrix:
 \begin{align}
X_4 &= 
\begin{bmatrix}
 1 &  1 & 0 & 0 & 1 & 1 & 0 \\
 1 &  1 & 1 & 1 & 0 & 0 & 0 \\
 0 &  0 & 0 & 0 & 1 & 1 & 1 \\
\end{bmatrix} = (x_1,x_2,x_3,x_4,x_5,x_6,x_7)
\intertext{The corresponding internal $\Pcal$-space and ideal are:}
\Ical(X_4,-1, \{ X_4 \}) &= \ideal \{x^3,\, y^3,\, z^2,\, (x-y)^3,\, (x-z)^2, (x-y-z)^2 \} \\
\Pcal(X_4,-1, \{ X_4 \}) &= %
 \spa \{1,\, x,\, y,\, z,\: xy+yz,\, xy+y^2,\, x^2 + xz, \nonumber \\
      &\qquad\qquad \;x^2y+xy^2+xyz+y^2z   
  \} 
\end{align}
By deletion and contraction of $x_7$ we obtain:
\begin{align}
\Ical(X_4\setminus x_7,-1,X_4\setminus x_7) &= \ideal \{x,\, y,\, z \} \\
\Pcal(X_4\setminus x_7,-1,X_4\setminus x_7) &= \spa \{ 1\} \\
 \Ical(X_4/x_7,-1,\{X_4/x_7\}) &= \ideal \{x^3,y^3, (x-y)^3\} \\
\Pcal(X_4 /x_7,-1,X_4 / x_7)&= \spa \{ 1,\,x,\,y,\; x^2,\, xy,\, y^2,\;x^2y + xy^2 \}
\end{align}
The Main Theorem and Proposition~\ref{Prop:ExactSequence} both hold in this example.

$p_{\bar x_5}p_{\bar x_6}\in S(X_4/ x_7,-1,\{X_4/x_7\})$, but $p_{x_5}p_{x_6} \not\in S(X_4,-1,\{X_4\})$!
No element of $S(X_4,-1,\{X_4\})$ is projected to $p_{\bar x_5}p_{\bar x_6}$.
Hence, our proof of the Main Theorem does not work in this case.
\end{Example}

%
%



\bibliographystyle{model1b-num-names}
\bibliography{../MasonsConjecture/Mason_Literatur}

\end{document}